\documentclass[10pt, oneside, reqno]{article}

%% ░░ Packages ░░░░░░░░░░░░░░░░░░░░░░░░░░░░░░░░░░░░░░░░░░░░░░░░░░░░░░░░░░░░░░░░░░░░░░░░░░░░░░░░░░░░░░░░░░░░░░░░░░░░░░░░
	\usepackage{algpseudocode}
	\usepackage{amsmath}
	\usepackage{amssymb}
	\usepackage{amsthm}
	\usepackage[USenglish]{babel}
	\usepackage{bm}
	\usepackage{eqparbox} % for widestL enumitem option
	\usepackage{enumitem}
	\usepackage[T1]{fontenc}
	\usepackage[top=0.8in,bottom=0.8in,left=1in,right=1in]{geometry}
	\usepackage{graphicx}
		\graphicspath{{Pics/}}
	\usepackage{makecell}
	\usepackage{mathtools}
	\usepackage{nicefrac}
	\usepackage{pgffor}
	\usepackage{subcaption}
	\usepackage[dvipsnames]{xcolor}
	\usepackage{hyperref}
	\usepackage{algorithm}
	\usepackage[capitalize,nameinlink]{cleveref}

%% ░░ Typesetting ░░░░░░░░░░░░░░░░░░░░░░░░░░░░░░░░░░░░░░░░░░░░░░░░░░░░░░░░░░░░░░░░░░░░░░░░░░░░░░░░░░░░░░░░░░░░░░░░░░░░░

	\frenchspacing
	\counterwithin{figure}{subsection}
	\numberwithin{equation}{section}

% Lists
	% properly aligned labels according to the widest label
	% <https://tex.stackexchange.com/a/547398/21001>
	\SetEnumitemKey{widestL}{
		labelwidth=\eqboxwidth{listlabel@\EnumitemId},
		leftmargin=!,
		format=\mylistlabel,
	}
	\newcommand\mylistlabel[2][l]{\eqmakebox[listlabel@\EnumitemId][#1]{#2}}

	\setlist[enumerate,1]{
		label=\text{\upshape{(\roman*)}},
	}
	\setlist[itemize,1]{
		label=\textbullet,
	}
	\setlist[itemize,2]{
		wide,
		label=\raisebox{0.3ex}{\tiny$\bullet$},
		leftmargin = 0pt,
		labelindent = 0pt,
	}
	\setlist{
		widestL,
		topsep=1ex,
		itemsep=0pt,
	}

% Hyperref
	\setcounter{tocdepth}{3}
	\hypersetup{%
		breaklinks,
		hypertexnames = true,
		plainpages    = false,
		colorlinks = true,
	}
	\makeatletter
		% Fix hyperref issues with algorithmic lines
		\newcommand{\theHALG@line}{\thealgorithm.\arabic{ALG@line}}
	\makeatother
	\pdfstringdefDisableCommands{%
		\def\and{, }%
		\def\footnotemark{}%
		\def\thanks{}%
	}%

	\NewDocumentCommand{\MFCQ}{t+}{%
		\IfBooleanTF{#1}{%
			\hyperref[definition_MFCQ+]{MPCC-MFCQ$^+$}%
		}{%
			\hyperref[definition_MFCQ]{MPCC-MFCQ}%
		}%
	}

	\NewDocumentCommand{\BCQ}{t+}{%
		\IfBooleanTF{#1}{%
			\hyperref[definition_BCQ+]{BCQ$^+$}%
		}{%
			\hyperref[definition_BCQ]{BCQ}%
		}%
	}

	\newcommand{\BCCQ}{\hyperref[definition_BCCQ]{BCCQ}}

%% ░░ Theorems ░░░░░░░░░░░░░░░░░░░░░░░░░░░░░░░░░░░░░░░░░░░░░░░░░░░░░░░░░░░░░░░░░░░░░░░░░░░░░░░░░░░░░░░░░░░░░░░░░░░░░░░░

% New theorems
\theoremstyle{plain}
	\newtheorem{theorem}{Theorem}[section]
		\Crefname{theorem}{Theorem}{Theorems}
	\newtheorem{assumption}[theorem]{Assumption}
		\Crefname{assumption}{Assumption}{Assumptions}
	\newtheorem{corollary}[theorem]{Corollary}
		\Crefname{corollary}{Corollary}{Corollaries}
	\newtheorem{definition}[theorem]{Definition}
		\Crefname{definition}{Definition}{Definitions}
	
		\Crefname{fact}{Fact}{Facts}
	\newtheorem{lemma}[theorem]{Lemma}
		\Crefname{lemma}{Lemma}{Lemmas}
	\newtheorem{proposition}[theorem]{Proposition}
		\Crefname{proposition}{Proposition}{Propositions}

\theoremstyle{definition}
	\newtheorem{example}[theorem]{Example}
		\Crefname{example}{Example}{Examples}
	\newtheorem{remark}[theorem]{Remark}
		\Crefname{remark}{Remark}{Remarks}

% Create theorem-compatible enumerate lists
	\newcommand{\createthmlists}[1]{%
		\newlist{#1enumerate}{enumerate}{1}%
		\setlist[#1enumerate,1]{%
			label = \upshape{(\roman*)},%
			ref = \text{\thetheorem(\roman*)},%
		}%
		\crefalias{#1enumeratei}{#1}%
		\newlist{#1enumerateq}{enumerate}{1}%
		\setlist[#1enumerateq,1]{%
			label = \upshape{(\alph*)},%
			ref = \text{\thetheorem(\alph*)},%
		}%
		\crefalias{#1enumerateqi}{#1}%
		\AtBeginEnvironment{#1}{%
			\expandafter\let\expandafter\enumerate\csname #1enumerate\endcsname
			\expandafter\let\expandafter\endenumerate\csname end#1enumerate\endcsname
			\expandafter\let\expandafter\enumerateq\csname #1enumerateq\endcsname
			\expandafter\let\expandafter\endenumerateq\csname end#1enumerateq\endcsname
			% Mimic envcountsame option while maintaining compatibility with cleveref:
			% all theorems have their own counters, but they manually increase all other counters every time
% 			\addtocounter{#1}{-1}%
% 			\foreach \x in {assumption, corollary, definition, example, fact, lemma, proposition, remark, theorem} {%
% 				\stepcounter{\x}%
% 			}%
		}{}{}%
	}%

	\createthmlists{theorem}
	\createthmlists{assumption}
	\createthmlists{corollary}
	\createthmlists{definition}
	\createthmlists{example}
	\createthmlists{fact}
	\createthmlists{lemma}
	\createthmlists{proposition}
	\createthmlists{remark}

%% ░░ Floats ░░░░░░░░░░░░░░░░░░░░░░░░░░░░░░░░░░░░░░░░░░░░░░░░░░░░░░░░░░░░░░░░░░░░░░░░░░░░░░░░░░░░░░░░░░░░░░░░░░░░░░░░░░

	\numberwithin{figure}{section}
	\numberwithin{table}{section}

%% ░░ Math ░░░░░░░░░░░░░░░░░░░░░░░░░░░░░░░░░░░░░░░░░░░░░░░░░░░░░░░░░░░░░░░░░░░░░░░░░░░░░░░░░░░░░░░░░░░░░░░░░░░░░░░░░░░░

% Symbols
	% mathbb variant
	\DeclareSymbolFont{varmathbb}{U}{txmia}{m}{it}
	\DeclareMathSymbol{\mathbbcharN}{\mathord}{varmathbb}{142}
	\DeclareMathSymbol{\mathbbcharR}{\mathord}{varmathbb}{146}

	\newcommand{\N}{\mathbbcharN}
	\newcommand{\R}{\mathbbcharR}
	
	\newcommand{\y}{\bm{y}}
	\newcommand{\z}{\bm{z}}

% Operators
	
	\DeclareMathOperator*{\argmin}{arg\,min}
	\DeclareMathOperator{\B}{B}
	\DeclareMathOperator{\cl}{cl}
	\DeclareMathOperator{\conv}{conv}
	\DeclareMathOperator{\dist}{dist}
	\DeclareMathOperator{\dom}{dom}
	\DeclareMathOperator{\epi}{epi}
	\DeclareMathOperator{\env}{env}
	\DeclareMathOperator{\graph}{gph}
	\DeclareMathOperator{\interior}{int}
	\DeclareMathOperator{\lev}{lev}
	\DeclareMathOperator*{\Lim}{Lim}
	\DeclareMathOperator*{\Liminf}{Lim\,inf}
	\DeclareMathOperator*{\Limsup}{Lim\,sup}
	\DeclareMathOperator*{\maximize}{maximize}
	\DeclareMathOperator*{\minimize}{minimize}
	\DeclareMathOperator{\prox}{prox}
	\DeclareMathOperator{\stt}{subject\ to}

	\NewDocumentCommand{\set}{s m o}{% s = *, m = mandatory argument, o = optional argument
		\IfBooleanTF{#1}{% starred version: use \left and \right
			\left\{#2
				\IfValueT{#3}{% optional argument specified
					\,\middle|\,#3
				}
			\right\}
		}{% non-starred version: don't use \left and \right
			\{#2
				\IfValueT{#3}{% optional argument specified
					\,|\,#3
				}
			\}
		}
	}

%% ██ Frontmatter █████████████████████████████████████████████████████████████████████████████████████████████████████

	\title{%
		A Lasry--Lions Envelope Approach for Mathematical Programs with Complementarity Constraints\thanks{%
			This work was supported by:
			Fond for Scientific Research Vlaanderen (FWO) projects G033822N, G081222N, G0A0920N;
			KU Leuven internal funding C14/24/103;
			Japan Society for the Promotion of Science KAKENHI grants JP21K17710 and JP24K20737.%
		}%
	}

	\author{%
		Jia Wang\thanks{%
			Department of Electrical Engineering (ESAT-STADIUS),
			KU Leuven, Kasteelpark Arenberg 10, 3001 Leuven, Belgium.
			{\sf\{jia.wang,panos.patrinos\}@esat.kuleuven.be}%
		}\and
		Andreas Themelis\thanks{%
			Faculty of Information Science and Electrical Engineering (ISEE),
			Kyushu University, 744 Motooka, Nishi-ku 819-0395, Fukuoka, Japan.
			{\sf andreas.themelis@ees.kyushu-u.ac.jp}%
		}\and
		Ivan Markovsky\thanks{%
			International Centre for Numerical Methods in Engineering (CIMNE),
			Gran Capità, 08034 Barcelona, Spain\\
			Catalan Institution for Research and Advanced Studies (ICREA),
			Pg. Lluis Companys 23, 08010 Barcelona, Spain.
			{\sf imarkovsky@cimne.upc.edu}%
		}\and
		Panagiotis Patrinos\footnotemark[2]
	}

	\date{}

%% ██ Main body ███████████████████████████████████████████████████████████████████████████████████████████████████████

\begin{document}

	\maketitle

	\begin{abstract}
		We propose a homotopy method for solving mathematical programs with complementarity constraints (CCs).
		The indicator function of the CCs is relaxed by the Lasry--Lions double envelope, an extension of the Moreau envelope that enjoys an additional smoothness property, making it amenable to fast optimization algorithms.
		The proposed algorithm mimics the behavior of homotopy methods for systems of nonlinear equations or penalty methods for constrained optimization: it solves a sequence of smooth subproblems that progressively approximate the original problem, using the solution of each subproblem as the starting point for the next one.
		In the limiting setting, we establish the convergence to Mordukhovich and Clarke stationary points.
		We also provide a worst-case complexity analysis for computing an approximate stationary point.
		Preliminary numerical results on a suite of benchmark problems demonstrate the effectiveness of the proposed approach.
	\end{abstract}

	\smallskip\noindent
	{\bf Keywords:}
		Mathematical programs with complementarity constraints,
		constraint qualifications,
		smoothing method,
		homotopy method,
		worst-case complexity.

	\smallskip\noindent
	{\bf Mathematics Subject Classification:}
		65K05,
		90C06,
		90C25,
		90C30.

	%% ██ 1. Introduction ████████████████████████████████████████████████████████████████████████████████████████████████
	\section{Introduction}
		We consider the following constrained mathematical program with complementarity constraints (MPCC):
		\begin{equation}\label{MPCC_simple}
		\begin{split}
		\minimize_{x\in C}
		\ \
		& f(x)
		\\
		\stt\ \
		& G(x)\geq0,\
		H(x)\geq0,\
		\langle G(x),H(x)\rangle =0,
		\end{split}
		\end{equation}
		where \(f:\R^{n}\to\R\) is a smooth function, and \(G,H:\R^{n}\to\R^{p}\) are smooth mappings.
		The set \(C\subseteq\R^{n}\) is nonempty, closed and convex.
		We focus on smooth objective and constraint functions, as all the essential difficulties are associated with the complementarity structure.
		Additionally, we assume that the solution set of problem~\eqref{MPCC_simple} is nonempty.
		
		MPCC is a class of important problems as they arise in many engineering and economics applications.
		Examples include
		multi-rigid-body contact problems in robotic control~\cite{Pang_1996},
		obstacle avoidance in autonomous driving~\cite{Hermans_2021},
		and policy modeling for price and subsidy regulation~\cite{Murphy_2016};
		see~\cite{Ferris_1997}  for a broader overview of applications.
		In addition, many practical applications are formulated as bilevel optimization problems, including Stackelberg games that represent hierarchical market structures~\cite{Chen_1995,Ye_1997}.
		In these problems, the feasible set is defined as the solution set of a parametric lower-level optimization problem.
		By replacing the lower-level problem with its Karush-Kuhn-Tucker (KKT) optimality conditions, the bilevel problem can be transformed into an MPCC{}.

		Although MPCCs have broad applications, they are challenging both theoretically and numerically: it is well known that standard constraint qualifications (CQs) such as the Mangasarian-Fromovitz constraint qualification (MFCQ) and the linear independence constraint qualification (LICQ) are violated at all feasible points of the complementarity constraints~\cite[Proposition 2.15]{Flegel_2005b}.
		As a result, the set of Lagrange multipliers at a stationary point may be unbounded~\cite{Izmailov_2012,Leyffer_2006}.
		Nevertheless, notable theoretical progress has been made, including the development of stationarity concepts such as strongly (S)~\cite{Scheel_2000}, Mordukhovich (M)~\cite{outrata_1999}, Clarke (C)~\cite{Scheel_2000}, alternative (A)~\cite{Flegel_2003,Flegel_2005c}, and weakly (W)~\cite{Scheel_2000} stationary points.
		Meanwhile, a variety of MPCC-tailored CQs have been developed, such as MPCC-LICQ~\cite[Definition 3(a)]{Kanzow_2015}, MPCC-MFCQ~\cite[Definition 3(b)]{Kanzow_2015}, MPCC-constant rank CQ (CRCQ)~\cite[Definition 3(c)]{Kanzow_2015}, MPCC-constant positive linear dependence CQ (CPLD)~\cite[Definition 3(d)]{Kanzow_2015}, MPCC-Abadie CQ~\cite[Definition 3.1]{Flegel_2005c}, MPCC-Guignard CQ~\cite[Definition 3.1]{Flegel_2005}, among others.
		A comprehensive overview of further MPCC-CQs can be found in~\cite{Ye_2005}, while an in-depth review of stationarity concepts and optimality conditions is provided in the PhD thesis~\cite{Flegel_2005b}.
		
		On the algorithmic side, many numerical methods have been developed to solve MPCCs.
		One of the most widely used approaches for handling complementarity constraints is the regularization method, which relaxes the constraint \(G_i(x)H_i(x)=0\) to \(G_i(x)H_i(x)\leq\tau\) for all \(i\in\set{1,\ldots,p}\) and solves a sequence of regularized subproblems as \(\tau\searrow0\); see~\cite{Scholtes_2001}.
		Other relaxation strategies for \(G_i(x)H_i(x)=0\) have also been proposed; see~\cite{Hoheisel_2013,Kadrani_2009,Kanzow_2013,Kanzow_2015}.
		An alternative way is the penalty method, which incorporates the orthogonality constraints to the objective via a penalty term, i.e., \(f(x)+\rho \langle G(x),H(x) \rangle \), and solves a sequence of subproblems with increasing \(\rho>0\); see~\cite{Anitescu_2007,Hu_2004,Ralph_2004}.
		In addition, other types of MPCC-tailored algorithms have been introduced, including interior point~\cite{Leyffer_2006,Raghunathan_2005}, active-set~\cite{Izmailov_2008}, and augmented Lagrangian methods~\cite{Jia_2023}, all of which have demonstrated good numerical performance.
		Furthermore, nonlinear programming (NLP) solvers have also been successfully applied to MPCCs; see~\cite{Fletcher_2006,Izmailov_2012}.
		Most of methods in the literature analyze convergence in the limiting setting,
		showing that accumulation points satisfy one of the S/M/C-stationarity conditions under appropriate assumptions.
		However, the methods with complexity guarantees, which aim to find an approximate stationary point within a finite number of steps, have not yet been fully studied in the literature.
		Moreover, many existing methods often involve solving general nonlinear subproblems or rely on complex parameter tuning strategies, making them less attractive for practical implementation.
		Motivated by these observations, the goal of this paper is to develop an algorithm for MPCCs that is both easy to implement and equipped with complexity guarantees.
		
		Recently, a quadratic penalty method for nonconvex optimization was proposed in~\cite{Grapiglia_2023}, which features both simple subproblem structure and straightforward parameter tuning.
		Therefore, we leverage this framework to design our algorithm for MPCCs and establish its complexity guarantees.
		It is important to note, however, that the quadratic penalty for complementarity constraints corresponds to the Moreau envelope of their indicator function, which is nonsmooth.
		To address this nonsmoothness, we propose using a surrogate: the Lasry--Lions double envelope~\cite{Attouch_1993}.
		The subproblems derived from the Lasry--Lions double envelope are smooth, and thus amenable to efficient smooth optimization techniques, such as fast gradient-based solvers~\cite{Ghadimi_2019} or L-BFGS-B~\cite{Zhu_1997}.
		To the best of our knowledge, the use of the Lasry--Lions double envelope is new.
		The only closely related work is the recent paper~\cite{Simoes_2021}, which applied this tool in nonsmooth nonconvex optimization for signal processing.

		\subsection{Contribution}
			The main contribution of this work is the development of a novel homotopy-based algorithm for MPCCs, leveraging the Lasry--Lions double envelope to construct a smooth approximation of the complementarity constraints, which pose particular challenges due to their inherent nonsmoothness, nonconvexity, and lack of standard CQs.
			Second, we introduce novel MPCC-tailored CQs that are weaker than the widely used MPCC-MFCQ, under which we establish the convergence to C/M-stationary points.
			Third, we propose a practical finite-iteration version of the algorithm that is easy to implement, and we provide a rigorous worst-case complexity analysis for this variant.
			Finally, we validate the effectiveness and robustness of the proposed method through extensive numerical experiments on MPCC benchmark problems, and show that it is competitive with state-of-the-art NLP solvers and the classical regularization method~\cite{Scholtes_2001} in both solution quality and computational efficiency.
			In particular, the proposed method is capable of efficiently handling large-scale and complex complementarity constraints, making it a promising tool for practical applications in engineering and economics.

		\subsection{Notation}
			The set of natural numbers is \(\N\coloneqq\set{1,2,\ldots}\).
			The \emph{extended real line} is denoted by \(\overline{\R}\coloneqq\R\cup\set{\pm \infty}\).
			The set of nonnegative and strictly positive reals are \(\R_+\coloneqq[0,\infty)\) and \(\R_{++}\coloneqq(0,\infty)\).
			We denote the positive part of a real number by \({[\cdot]}_+\coloneqq\max\set{0,\cdot}\).
			For \(\varphi:\R^{n}\rightarrow\overline{\R}\), the \emph{domain} is defined as \(\dom\varphi\coloneqq\set{x\in\R^n}[\varphi(x)<\infty]\),
			the \emph{conjugate} is given by \(\varphi^*(y)\coloneqq\sup_x\set{\langle y,x\rangle-\varphi(x)}\) and the \emph{biconjugate} is defined as \(\varphi^{**}(x)\coloneqq\sup_y\set{\langle y,x\rangle-\varphi^*(y)}\).
			Given \(\varepsilon>0\), we denote \(\varepsilon\)-\(\arg\min\varphi\coloneqq\set{x\in\R^n}[\varphi(x)\leq\inf\varphi+\varepsilon]\).
			The collection of subsequences of \(\N\) is \(\mathcal{N}_{\infty}^{\sharp}\).
			The \emph{convex hull} of a set \(E\subseteq \R^n\) is denoted by \(\conv E\), and its \emph{interior} by \(\interior E\).
			The notation \(D\) always stands for the \emph{complementarity set} in this work, i.e.,
			\begin{equation}\label{CC_set}
				D\coloneqq\set{z\in\R^{2}_{+}}[z_{1}z_{2}=0]\subset\R^{2}.
			\end{equation}
			We denote the \(i\)th complementarity constraint pair by \(F_i\coloneqq(G_i,H_i):\R^{n}\to\R^{2}\), so that the complementarity constraints can be expressed by \(F_i(x)\in D\) for all \(i\in\set{1,\ldots,p}\).
			The \emph{Jacobian} of \(F_i\) at \(x\) is denoted by \(JF_i(x)\), i.e.,
			\[
			JF_i(x)=
			\begin{pmatrix}
				\nabla G_i{(x)}^\top
				\\
				\nabla H_i{(x)}^\top
			\end{pmatrix}
			\in\R^{2\times n}.
			\]
			The \emph{Euclidean distance} between a point \(z\) and the set \(D\) is denoted \(\dist(z,D)\coloneqq\min_{z'\in D}\|z'-z\|\) and the (possibly set-valued) \emph{Euclidean projection} onto the set \(D\) is \(\Pi_{D}(z)\coloneqq\arg\min_{z'\in D}\|z'-z\|\).
			The closed ball with center at \(x\in\R^n\) and radius \(\xi\) is \(\B(x,\xi)\coloneqq\set{x'\in\R^n}[\|x'-x\|\leq\xi]\).

	%% ██ 2. Preliminaries ███████████████████████████████████████████████████████████████████████████████████████████████
	\section{Preliminaries and Supporting Results}
		In this section, we begin by reviewing key concepts from variational and nonsmooth analysis, which form the mathematical foundation for our discussion.
		We then introduce several stationarity concepts commonly used in the MPCC literature.
		Finally, we review some MPCC-tailored CQs, and use them as the foundation to propose a novel and weaker variant.
		The notation and terminology are primarily based on the works of~\cite{Burke_2013,Hoheisel_2013,Jia_2023}.
		
		\subsection{Fundamentals of Variational Analysis}
			The \emph{epigraph} of \(\varphi:\R^{n}\rightarrow\overline{\R}\) is the set \(\epi \varphi\coloneqq\set{(x,\alpha)\in\R^{n+1}}[\varphi(x)\leq\alpha]\),
			and \(\varphi\) is said to be \emph{lower semicontinuous (lsc)} if \(\epi \varphi\) is closed, and proper if \(\epi \varphi\) is nonempty and \(\varphi(x)>-\infty\) for all \(x\in\R^n\).
			For a sequence of sets \(\set{C^\nu}\) with \(C^\nu\subseteq \R^n\) for all \(\nu\in\N\), we define the \emph{outer limit} as
			\[
			\Limsup_{\nu\to\infty} C^{\nu}\coloneqq\set{x}
			[\exists K\in \mathcal{N}_{\infty}^{\sharp},\
			\{x^\nu\}\to_{K}x:
			x^{\nu}\in C^{\nu}\ \ \forall\nu\in K]
			\]
			and the \emph{inner limit} as
			\[
			\Liminf_{\nu\to\infty} C^{\nu}\coloneqq\set{x}
			[\exists k\in \N,\
			\{x^\nu\}\to x:
			x^{\nu}\in C^{\nu}\ \ \forall\nu\geq k].
			\]
			We say that \(\set{C^\nu}\) \emph{set-converges} if its outer limit and inner limit are equal, i.e.,
			\[
			\Lim_{\nu\to\infty} C^{\nu}\coloneqq\Limsup_{\nu\to\infty} C^{\nu}=\Liminf_{\nu\to\infty} C^{\nu}.
			\]
			We say that a sequence \(\set{\varphi^{\nu}}\) of functions \(\varphi^{\nu}:\R^{n}\rightarrow\overline{\R}\) \emph{epi-converges} to \(\varphi:\R^{n}\rightarrow\overline{\R}\) if
			\[
			\Lim_{\nu\to\infty} \epi \varphi^{\nu}=\epi \varphi,
			\]
			which is denoted by \(\varphi^{\nu}\overset{\mathrm{e}}{\rightarrow}\varphi\). For set-valued mapping \(S:\R^{n}\rightrightarrows\R^{m}\), the \emph{graph} of \(S\) is the set \(\graph S\coloneqq\set{(x,u)\in\R^{n+m}}[u\in S(x)]\).
			We say that a sequence \(\set{S^{\nu}}\) of set-valued mappings \(S^{\nu}:\R^{n}\rightrightarrows\R^{m}\) \emph{graphically converges} to \(S:\R^{n}\rightrightarrows\R^{m}\) if
			\[
			\Lim_{\nu\to\infty} \graph S^{\nu}=\graph S,
			\]
			which is denoted by \(S^{\nu}\overset{\mathrm{g}}{\rightarrow}S\).
			Considering the complementarity set \(D\), its \emph{indicator function} and  Euclidean projection are respectively given by
			\[
				\delta_{D}(z)
			=
				\begin{cases}
					0 &  \text{if } z\in D \\
					+\infty &  \text{otherwise}
				\end{cases}
			\quad\text{and}\quad
				\Pi_{D}(z)
			=
				\begin{cases}
					\Pi_{\R_{+}^{2}}(z) &  \text{if }z\notin\R_{++}^{2} \\
					(z_1,0) &  \text{if } z_1>z_2>0 \\
					(0,z_2) &  \text{if }z_2>z_1>0 \\
					\set{(z_1,0),(0,z_2)} &  \text{if } z_1=z_2>0.
				\end{cases}
			\]
			We next review some cone definitions for a set \(E\subseteq \R^n\), which are important for characterizing different stationarity conditions.
			The \emph{tangent cone} at a point \(x\in E\) is
			\[
			T_{E}(x)\coloneqq\set{d\in\R^{n}}[\exists E\ni x^{\nu}\to x\ \text{and}\ t^{\nu}\searrow 0\ \text{such that}\ \tfrac{x^{\nu}-x}{t^{\nu}}\to d].
			\]
			The \emph{regular normal cone} of \(E\) at \(x\) is defined as
			\[
			\hat{N}_{E}(x)\coloneqq\set{
			d\in\R^{n}}[\langle d,d'\rangle \leq0\ \ \forall d'\in T_{E}(x)].
			\]
			The \emph{limiting normal cone} of \(E\) at \(x\) is defined as
			\[
			N_{E}(x)\coloneqq\set{d\in\R^{n}}[\exists E \ni x^{\nu}\to x\ \text{and}\ d^{\nu}\in\hat{N}_{E}(x^{\nu})\ \text{such that}\ d^{\nu}\to d].
			\]
			As for a convex set \(C\), we have \(\hat{N}_{C}(x)=N_{C}(x)\) for all \(x\in C\).
			Based on the above definitions, the regular normal cone and the limiting normal cone of the complementarity set \(D\) at \(z\in D\) are explicitly given by
			\[
			\hat{N}_{D}(z)
			=
			\begin{cases}
				\set{0}\times\R & \text{if } z_{1}>0 \\
				\R\times\set{0} & \text{if } z_{2}>0 \\
				\R_{-}^{2} & \text{if } z=(0,0)
			\end{cases}
			\quad
			\text{and}
			\quad
			N_{D}(z)
			=
			\begin{cases}
				\set{0}\times\R & \text{if } z_{1}>0 \\
				\R\times\set{0} & \text{if } z_{2}>0 \\
				\R_{-}^{2}\cup D & \text{if } z=(0,0).
			\end{cases}
			\]
			As a set-valued mapping, note that the limiting normal cone is \emph{outer semicontinuous}, in the sense that $\Limsup_{z^{\nu}\to z}N_{D}(z^{\nu})\subseteq N_{D}(z)$, and has better calculus properties compared to the regular normal cone.
			Furthermore, we introduce additional cone concepts as
			\begin{align*}
				N_D^{\rm C}(z)
			\coloneqq{} &
				\begin{cases}
					\hat{N}_{D}(z) & \text{if } z\neq(0,0) \\
					N_{D}((0,0))\cup\R_{+}^{2} & \text{if } z=(0,0)
				\end{cases}
			\shortintertext{and}
				N_D^{\rm W}(z)
			\coloneqq{} &
				\begin{cases}
					\hat{N}_{D}(z) & \text{if } z\neq(0,0) \\
					\R^{2} & \text{if } z=(0,0).
				\end{cases}
			\end{align*}
			We will refer to the cone \(N_D^{\rm C}\) as the \emph{Clarke limiting cone}, since it is related to the Clarke-stationarity in MPCCs.
			Note that \(N_D^{\rm C}\) is different from the \emph{Clarke normal cone} in~\cite[Exercise 6.38]{Rockafellar_1998}.
			The cone \(N_D^{\rm W}\) is referred to as the \emph{weak limiting cone}, which will be used to formulate a generalized version of MPCC-MFCQ (see \cref{definition_MFCQ+}).

		\subsection{Stationarity Concepts}
			To each of these cones, \(\hat{N}_{D}\), \(N_{D}\) and \(N_D^{\rm C}\), there corresponds one of the following stationarity concepts for MPCC problems.

			\begin{definition}[Stationary point]\label{definition_M_C}
			Relative to~\eqref{MPCC_simple}, a feasible point \(x\in\R^{n}\) is said to be \emph{strongly stationary (S-stationary)} if there exist \(y_{1},\ldots,y_{p}\in\R^{2}\) such that
			\[
			\begin{cases}
			\nabla f(x)
			+
			\sum_{i=1}^{p}
			JF_{i}{(x)}^{\top}y_{i}
			+
			N_{C}(x)
			\ni
			0\\[5pt]
			y_{i}\in \hat{N}_D(F_{i}(x))
			\end{cases}
			\]
			for all \(i\in\set{1,\ldots,p}\),
			or equivalently, if it satisfies
			\[
			-\nabla f(x)\in \sum_{i=1}^{p}JF_{i}{(x)}^{\top}\hat{N}_D(F_{i}(x))+N_{C}(x).
			\]
			If the above condition holds with \(\hat{N}_D(F_{i}(x))\) being replaced by \(N_D(F_{i}(x))\) for all \(i\in\set{1,\ldots,p}\), then we say that \(x\) is \emph{Mordukhovich stationary (M-stationary)}.
			Furthermore, if \(\hat{N}_D(F_{i}(x))\) is instead replaced by \(N_D^{\rm C}(F_{i}(x))\) for all \(i\in\set{1,\ldots,p}\), then \(x\) is said to be \emph{Clarke stationary (C-stationary)}.
			\end{definition}

			\begin{figure}[t]\centering
				\includegraphics[width=0.9\linewidth]{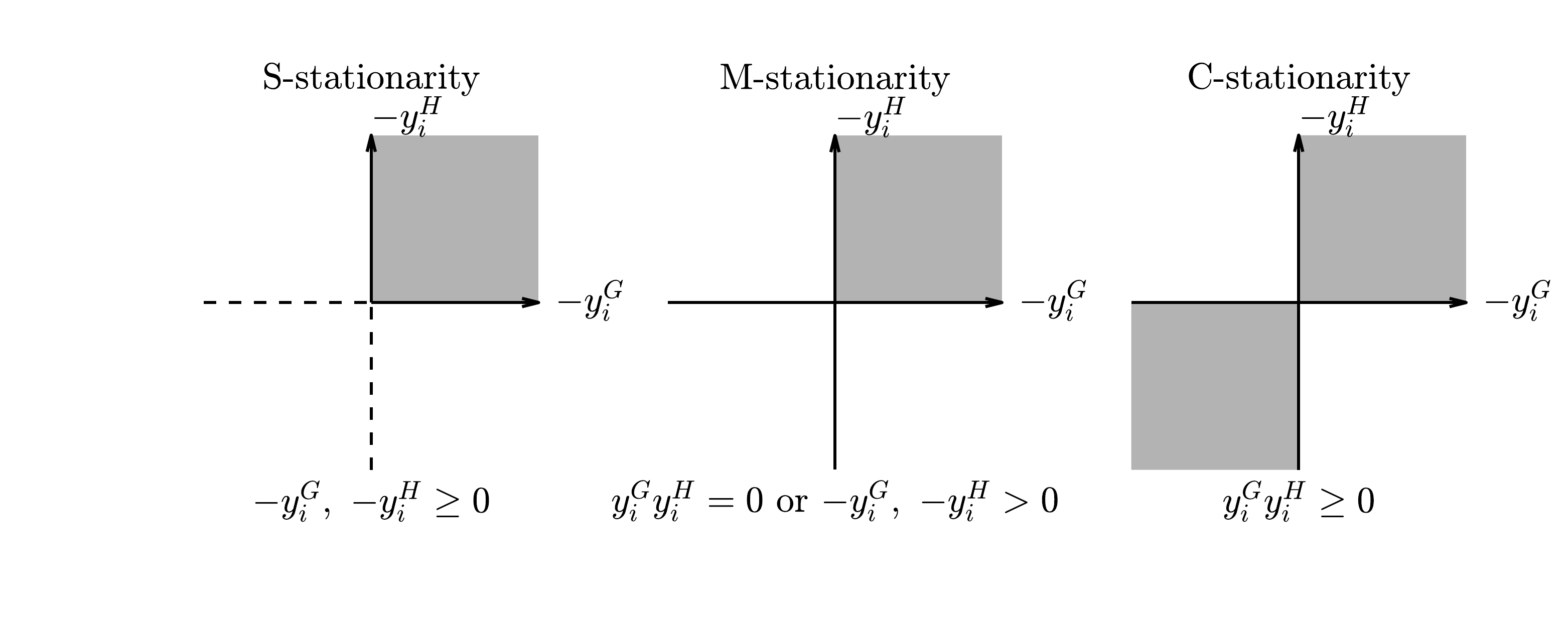}%
				\caption{Multipliers of the MPCC-Lagrangian associated with biactive constraints \(G_i(x)=H_i(x)=0\) for different stationary points.}%
				\label{fig_sta}%
			\end{figure}

			In the literature, \(-y_i\) typically serves as the multiplier associated with the \emph{MPCC-Lagrangian} \(\mathcal{L}:\R^n\times\R^{2p}\to\R\), given by
			\[
			\mathcal{L}(x,\y)=f(x)+\sum_{i=1}^{p}y_i^G G_i(x)+\sum_{i=1}^{p}y_i^H H_i(x),
			\]
			in which \(\y=(y_{1},\ldots,y_{p})\in\R^{2p}\) and \(y_i=(y_i^G,y_i^H)\in\R^2\); see~\cite{Izmailov_2012}.
			With this notation, the stationarity concepts introduced in \cref{definition_M_C} are consistent with the standard definitions in~\cite[Definition 2.3]{Hoheisel_2013}.
			Each of these stationarity concepts treats differently the signs of the multipliers associated with \emph{biactive} complementary constraint, that is, those satisfying \(G_i(x)=H_i(x)=0\); see \cref{fig_sta} for an illustration.
			In contrast, when the solution satisfies either \(G_i(x)>0,H_i(x)=0\) or \(G_i(x)=0,H_i(x)>0\) for all \(i\in\set{1,\ldots,p}\), the various stationarity conditions are equivalent.
			Note that the S-stationarity is equivalent to the standard KKT conditions of an MPCC, since there are no feasible descent directions~\cite{Hoheisel_2013}.
			On the other hand, for C/M-stationary points, feasible descent directions may exist.
			From this point onward, we will refer to \(y_i\) as the multiplier associated with the generalized equations in \cref{definition_M_C}.
			We now introduce the concept of asymptotic M-stationarity.

			\begin{definition}[AM-stationary point {\cite[Definition 2.2]{Jia_2023}}]\label{def:AKKT}%
			Relative to \eqref{MPCC_simple}, a feasible point \(x\in\R^{n}\) is said to be \emph{asymptotic Mordukhovich stationary (AM-stationary)} if there exist sequences \(\{(x^\nu,\y^\nu,\z^\nu)\}\) such that
			\[
				\begin{cases}
				\nabla f(x^\nu)+\sum_{i=1}^{p}JF_{i}{(x^\nu)}^\top y_{i}^\nu+N_{C}(x^\nu)\to0\\[5pt]
				(x^\nu,z_{i}^\nu)\to (x,F_{i}(x))\\[5pt]
				y_{i}^\nu\in N_{D}(z_{i}^\nu)
				\end{cases}
			\]
			for all \(i\in\set{1,\ldots,p}\),
			where \(\y^\nu=(y_{1}^\nu,\ldots,y_{p}^\nu)\in\R^{2p}\) and \(\z^\nu=(z_{1}^\nu,\ldots,z_{p}^\nu)\in\R^{2p}\).
			\end{definition}

			Clearly, any point that is M-stationary is also AM-stationary, as it corresponds to the constant sequence \((x^\nu,y_i^\nu,z_i^\nu)=(x,y_i,F_i(x))\) for all \(i\in\set{1,\ldots,p}\), and the converse holds as long as \(\set{\y^\nu}\) is bounded~\cite{Jia_2023}.
				Note that because of the continuity of \(\nabla f\) and \(JF_i\), there is no loss of generality in simplifying the above definition by taking \(z_i^\nu=F_i(x^\nu)\).
			The extra variable \(z_i^\nu\) is however convenient as it allows infeasible point \(x^\nu\), in the sense that \(F_i(x^\nu)\in D\) need not hold.

		\subsection{Constraint Qualifications}
			To guarantee that a local minimizer of the MPCC~\eqref{MPCC_simple} is stationary in the sense of \cref{definition_M_C}, special MPCC-tailored CQs are used;
			in the literature, MPCC-MFCQ is among the most widely used ones.
			For example, the methods proposed in~\cite{Lin_2005,Scholtes_2001} are shown to converge to C-stationary points under this CQ;\@ see also~\cite[Theorems 3.1 and 3.3]{Hoheisel_2013}.
			Let us consider general constraints
			\begin{equation}\label{general_feasible_set}
				\begin{cases}
					G_i(x)\geq0,\ H_i(x)\geq0,\ G_i(x)H_i(x)=0,\ i\in\set{1,\ldots,p}
				\\
					c_{i}(x)=0,\ i\in\mathcal{E}
				\\
					c_{i}(x)\leq0,\ i\in\mathcal{I}
				\end{cases}
			\end{equation}
			and define some crucial index sets at a feasible point \(x\) of~\eqref{general_feasible_set}:
			\begin{equation}\label{index_set_I00}
				\begin{split}
					I_{\mathcal{I}}(x)
				\coloneqq{} &
					\set{i\in\mathcal{I} }[c_{i}(x)=0],
				\\
					I_{0+}(x)
				\coloneqq{} &
					\set{i\in\set{1,\ldots,p}}[G_i(x)=0,\ H_i(x)>0],
				\\
					I_{+0}(x)
				\coloneqq{} &
					\set{i\in\set{1,\ldots,p}}[G_i(x)>0,\ H_i(x)=0],
				\\
					I_{00}(x)
				\coloneqq{} &
					\set{i\in\set{1,\ldots,p}}[G_i(x)=0,\ H_i(x)=0].
				\end{split}
			\end{equation}
			For simplicity we may omit the dependence on \(x\) in the index sets above, if there is no confusion.
			We are now in a position to recall the conventional MPCC-MFCQ needed.
			\begin{definition}[MPCC-MFCQ {\cite[Definition 3(b)]{Kanzow_2015}}]\label{definition_MFCQ}
			Let \(x\in C\) be feasible for~\eqref{general_feasible_set}.
			Then, the \emph{MPCC-Mangasarian-Fromovitz constraint qualification (MPCC-MFCQ)} holds at \(x\) if whenever
			\[
				\begin{cases}
					\alpha_i\in\R_+,\
					\pi_i\in\R,\
					(y_i^G,y_i^H)\in\R^2
				\\[5pt]
					\sum_{i\in I_{\mathcal{I}}}\alpha_i\nabla c_i(x)
					+\sum_{i\in \mathcal{E}}\pi_i\nabla c_i(x)
				\\[5pt]
					+\sum_{i\in I_{0+}\cup I_{00}}y_i^G\nabla G_i(x)
					+\sum_{i\in I_{+0}\cup I_{00}}y_i^H\nabla H_i(x)
					+N_C(x)
					\ni0
				\end{cases}
			\]
			it holds that
			\[
				\begin{cases}
					\alpha_i=0,\ i\in I_{\mathcal{I}}
				\\
					\pi_i=0,\ i\in \mathcal{E}
				\\
					y_i^G=0,\ i\in I_{0+}\cup I_{00}
				\\
					y_i^H=0,\ i\in I_{+0}\cup I_{00}.
				\end{cases}
			\]
			\end{definition}

			Note that the conventional \MFCQ\@ is not formulated within the framework of general composite constraints of the form \(\delta_E(h(x))\), where \(E\) can represent sets such as \(\R_+\), \(\{0\}\) or \(D\), and \(h\) may correspond to constraint functions like \(c_i\) or \(F_i\).
			To leverage the powerful tools of variational analysis, the constraints in~\eqref{general_feasible_set} can be reformulated within the composite framework.
			In this framework, a widely used concept of CQ is the \emph{basic constraint qualification (BCQ)}, as presented in~\cite[Theorem 5.5]{Burke_2013} and~\cite[Theorem 6.14]{Rockafellar_1998}, which is also called \emph{no nonzero abnormal multiplier CQ} or \emph{generalized MFCQ} in the literature~\cite{Jia_2023}.
			Accordingly, for the feasible set defined in~\eqref{general_feasible_set}, we introduce the BCQ as follows:

			\begin{definition}[BCQ {\cite[Theorem 6.14]{Rockafellar_1998}}]\label{definition_BCQ}
			Let \(x\in C\) be feasible for~\eqref{general_feasible_set}.
			Then, the \emph{basic constraint qualification (BCQ)} holds at \(x\) if whenever
			\[
				\begin{cases}
					\alpha_i\in N_{\R_-}(c_i(x)),\
					\pi_i\in\R,\
					y_{i} \in N_D(F_i(x))
				\\[5pt]
					\sum_{i\in \mathcal{I}}\alpha_i\nabla c_i(x)
					+\sum_{i\in \mathcal{E}}\pi_i\nabla c_i(x)
					+\sum_{i=1}^{p}
					JF_{i}{(x)}^\top y_i
					+N_C (x)\ni0
				\end{cases}
			\]
			it holds that
			\[
				\begin{cases}
					\alpha_i=0,\ i\in \mathcal{I}
				\\
					\pi_i=0,\ i\in \mathcal{E}
				\\
					y_i=(0,0),\ i\in\set{1,\ldots,p}.
				\end{cases}
			\]
			\end{definition}

			Both \MFCQ\@ and \BCQ\@ handle different types of constraints separately, lacking a unified treatment.
			Motivated by this observation, one of the main goals of this paper is to introduce a new algorithm that handles all constraints in~\eqref{general_feasible_set} in a unified manner.
			To this end, we begin with the following definition and example:

			\begin{definition}[Well-behaved CC]\label{definition_well_CC}
			\(F_i\coloneqq(G_i,H_i):\R^n\to\R^2\) is said to be a \emph{well-behaved complementarity constraint (well-behaved CC)}
			if there exists \(\xi\in\R_{++}\) such that \(F_i(x)\notin\B(0,\xi)\cap\R^2_{++}\) for every \(x\in\R^n\).
			\end{definition}

			The appeal of well-behaved CCs in avoiding the diagonal in the positive orthant close to the origin will be apparent in \cref{lemma_y_unbounded_bounded}.

			\begin{example}[Equality and inequality constraints are well-behaved]\label{example_equality_inequality_CC}
			Consider equality and inequality constraints, and their CC reformulation:
			\[
				\begin{cases}
					c_{i}(x)=0,\ i\in\mathcal{E}
				\\
					c_{i}(x)\leq0,\ i\in\mathcal{I}
				\end{cases}
				\quad
				\Leftrightarrow
				\quad
				F_i(x)=
				\begin{cases}
				( c_{i}(x),a),\ \ &i\in\mathcal{E}
				\\
				( -c_{i}(x),0),\ \ &i\in\mathcal{I}
				\end{cases}
				~~
				\text{with}
				~~
				F_i(x)\in D,
			\]
			where \(a>0\) is a given constant.
			Then, \(F_i\) is a well-behaved CC for all \(i\in\mathcal{E}\cup\mathcal{I}\).
			Specifically, when \(i\in\mathcal{E}\), any \(\xi\in(0,a)\) is such that \(F_i(x)\notin\B(0,\xi)\) for all \(x\in\R^n\). Therefore, \(F_i\) is a well-behaved CC according to \cref{definition_well_CC}.
			When \(i\in\mathcal{I}\), it is evident that \(F_i(x)\) never lies in \(\R^2_{++}\),
			which implies that \(F_i\) is well-behaved.
			It is worth noting that there are several alternative ways to reformulate the equality constraint.
			For example, one could use \(F_i=(c_i(x),-c_i(x))\in D\), or introduce two separate components: \(F_{i^+}=(c_i(x),0)\in D\) and \(F_{i^-}=(-c_i(x),0)\in D\).
			All of these formulations result in well-behaved CCs, as none of the corresponding \(F_i(x)\) lies in \(\R^2_{++}\).
			In the experiments presented in \cref{sec_experiments}, we use the \(F_{i^+},F_{i^-}\) reformulation, as it demonstrates better numerical performance compared to the alternatives.
			\end{example}

			The concept of well-behaved CCs is particularly useful when equality and inequality constraints are reformulated as CCs.
			Through this reformulation, optimization problems with constraints of the form~\eqref{general_feasible_set} can be uniformly recast into the structure of~\eqref{MPCC_simple}.
			Building on the notion of well-behaved CCs, we now introduce a generalized version of \MFCQ\@:

			\begin{definition}[MPCC-MFCQ\textsuperscript{\textbf{+}}]\label{definition_MFCQ+}
			Relative to~\eqref{MPCC_simple},
			let \(\mathcal{W}\subseteq\set{1,\ldots,p}\) be the index set of well-behaved CCs and \(y_i=(y_i^G,y_i^H)\in\R^2\) for \(i\in\set{1,\ldots,p}\).
			Then, \emph{MPCC-MFCQ\textsuperscript{\textbf{+}}} holds at a feasible point \(x\) if whenever
			\[
			\begin{cases}
				y_{i} \in N_D(F_i(x)),\ i\in\mathcal{W}
			\\[5pt]
				y_{i} \in N_D^{\rm W}(F_i(x)),\ i\notin\mathcal{W}
			\\[5pt]
				\sum_{i=1}^{p}
				JF_{i}{(x)}^\top y_i
				+N_C (x)\ni0
			\end{cases}
			\]
			one has that
			\[
			\begin{cases}
				\text{either}\ G_i\ \text{is constant or}\ y_{i}^G=0,
			\\
				\text{either}\ H_i\ \text{is constant or}\ y_{i}^H=0,
				\ i\in\set{1,\ldots,p}.
			\end{cases}
			\]
			\end{definition}

			The statement of constant \(G_i\) and \(H_i\) in \MFCQ+ may seem unusual at first glance, but it arises naturally from the reformulation of equality and inequality constraints as CCs.
			Specifically, when such constraints are reformulated as CCs following \cref{example_equality_inequality_CC}, one may have \(F_i=(G_i,H_i)\) for \(i\in\mathcal{W}\), where \(G_i(x)=c_i(x)\) and \(H_i(x)=a\geq0\).
			Since \(H_i\) is a constant mapping, it holds that
			\[
			JF_i{(x)}^\top y_i
			=
			\begin{pmatrix}
				\nabla c_i(x),0
			\end{pmatrix}
			\begin{pmatrix}
				y_i^G
				\\
				y_i^H
			\end{pmatrix}.
			\]
			Therefore, the multiplier \(y_i^H\) corresponding to such \(H_i\) is excluded from the requirement of being zero, as it is always annihilated by the zero block in the Jacobian.

			It is important to note that when \(\mathcal{W}=\mathcal{E}\cup\mathcal{I}\), \MFCQ+\@ is equivalent to \MFCQ\@.
			In contrast, when \(\mathcal{W}\supset \mathcal{E}\cup\mathcal{I}\), \MFCQ+\@ is weaker than \MFCQ\@, since the region of the multiplier \(y_i\) for \(i\in\mathcal{W}\setminus (\mathcal{E}\cup\mathcal{I})\) is restricted from \(N_D^{\rm W}(F_i(x))\) to \(N_D(F_i(x))\).
			Similar to \MFCQ+, by leveraging the concept of well-behaved CCs, we can modify \BCQ\@ accordingly.
			In this case, however, rather than obtaining a weaker CQ, we obtain an equivalent condition, but with a much simplified and condensed statement.

			\begin{definition}[BCQ\textsuperscript{\textbf{+}}]\label{definition_BCQ+}
			Relative to~\eqref{MPCC_simple},
			let \(y_i=(y_i^G,y_i^H)\in\R^2\) for \(i\in\set{1,\ldots,p}\).
			Then, \emph{BCQ\textsuperscript{\textbf{+}}} holds at a feasible point \(x\) if whenever
			\[
			\begin{cases}
				y_{i} \in N_D(F_i(x)),\ i\in\set{1,\ldots,p}
			\\[5pt]
				\sum_{i=1}^{p}
				JF_{i}{(x)}^\top y_i
				+N_C (x)\ni0
			\end{cases}
			\]
			one has that
			\[
			\begin{cases}
				\text{either}\ G_i\ \text{is constant or}\ y_{i}^G=0,
			\\
				\text{either}\ H_i\ \text{is constant or}\ y_{i}^H=0,
				\ i\in\set{1,\ldots,p}.
			\end{cases}
			\]
			\end{definition}

			The equivalence of \BCQ+ and \BCQ\@ stems from the fact that they both involve only limiting normal cones.
			Building on \BCQ+\@, we now introduce our main MPCC-tailored CQ as follows, which will serve as a key tool in the convergence analysis of our proposed algorithm.

			\begin{definition}[BCCQ]\label{definition_BCCQ}
			Relative to~\eqref{MPCC_simple},
			let \(\mathcal{W}\subseteq\set{1,\ldots,p}\) be the index set of well-behaved CCs and \(y_i=(y_i^G,y_i^H)\in\R^2\) for \(i\in\set{1,\ldots,p}\).
			Then, the \emph{basic Clarke constraint qualification (BCCQ)} holds at a feasible point \(x\) if whenever
			\[
			\begin{cases}
				y_{i} \in N_D(F_i(x)),\ i\in\mathcal{W}
			\\[5pt]
				y_{i} \in N_D^{\rm C}(F_i(x)),\ i\notin\mathcal{W}
			\\[5pt]
				\sum_{i=1}^{p}
				JF_{i}{(x)}^\top y_i
				+N_C (x)\ni0
			\end{cases}
			\]
			one has that
			\[
			\begin{cases}
				\text{either}\ G_i\ \text{is constant or}\ y_{i}^G=0,
				\\
				\text{either}\ H_i\ \text{is constant or}\ y_{i}^H=0,
				\ i\in\set{1,\ldots,p}.
			\end{cases}
			\]
			\end{definition}

			Applying the methods from \cref{example_equality_inequality_CC} to reformulate~\eqref{general_feasible_set} entirely into CCs reveals the following chain of implications among these MPCC-tailored CQs:
			\begin{equation}\label{relation_CQs}
			\text{\MFCQ}
			\Longrightarrow
			\text{\MFCQ+}
			\Longrightarrow
			\text{\BCCQ}
			\Longrightarrow
			\text{\BCQ+}
			\Longleftrightarrow
			\text{\BCQ}.
			\end{equation}
			Beyond the previous discussion, this chain shows that \BCCQ\@ is implied by \MFCQ+\@.
			Specifically, \BCCQ\@ restricts the region of the multiplier \(y_i\) from \(N_D^{\rm W}(F_i(x))\) to \(N_D^{\rm C}(F_i(x))\) for \(i\notin \mathcal{W}\), while keeping the region of the multiplier \(y_i\) for \(i\in \mathcal{W}\) unchanged.
			Consequently, if \MFCQ+\@ holds at a feasible point \(x\), then \BCCQ\@ is necessarily satisfied as well.

	%% ██ 3. LL Double Envelope ██████████████████████████████████████████████████████████████████████████████████████████
	\section{Lasry--Lions Double Envelope}
		We now introduce the fundamental concepts and key properties of the Lasry--Lions double envelope.
		This construction is derived from the Moreau envelope and serves as a tool for smoothing the given function.
		
		\begin{definition}[Moreau envelope]\label{definition_Moreau}
		Given \(\lambda>0\), the \emph{Moreau envelope} of a proper and lsc function \(\varphi:\R^{n}\to\overline{\R}\) is \(\env_\lambda\varphi:\R^{n}\to\overline{\R}\) given by
		\[
			\env_\lambda\varphi(x)
		\coloneqq
			\inf_{x'\in\R^{n}}
			\set{
			\varphi(x')
			+\tfrac{1}{2\lambda}\|x'- x\|^2}.
		\]
		The set of minimizers is denoted by \(\prox_{\lambda\varphi}(x)\).
		Furthermore, \(\varphi\) is \emph{prox-bounded} if there exists \(\lambda>0\) such that \(\env_\lambda\varphi(x)>-\infty\) for some \(x\in\R^{n}\).
		The supremum of the set of all such \(\lambda\) is the threshold \(\lambda_\varphi\).
		\end{definition}
		
		It is important to note that when \(\varphi\) is lower bounded, such as in the case of an indicator function, we have \(\lambda_\varphi=+\infty\).
		Building on \cref{definition_Moreau}, we define the Lasry-Lions envelope as follows:
		
		\begin{definition}[Lasry--Lions double envelope]\label{definition_LL}
		Given \(\lambda>\mu>0\), the \emph{Lasry--Lions double envelope} of a proper and lsc function \(\varphi:\R^{n}\to\overline{\R}\) is \(\env_{\lambda,\mu}\varphi:\R^{n}\to\overline{\R}\) given by
		\begin{equation}\label{sup_LL_definition}
			\env_{\lambda,\mu}\varphi(x)
		\coloneqq
			-\env_{\mu}(-\env_\lambda\varphi)(x)
		=
			\sup_{x'\in\R^{n}}
			\set{
			\env_\lambda\varphi(x')
			-\tfrac{1}{2\mu}\|x'- x\|^2}.
		\end{equation}
		We denote the set of maximizers of~\eqref{sup_LL_definition} by \(P_{\lambda,\mu}\varphi(x)\coloneqq\prox_{-\mu\env_\lambda\varphi}(x)\).
		Furthermore, the \emph{proximal hull} of \(\varphi\) is given by setting \(\mu=\lambda\) in~\eqref{sup_LL_definition}, and is denoted as \(\env_{\lambda,\lambda}\varphi(x)\).
		\end{definition}
		
		We summarize below some fundamental properties of the Lasry--Lions double envelope that are particularly valuable for the design and theoretical analysis of algorithms.
		
		\begin{proposition}[Basic properties of the Lasry--Lions double envelope]
		Let \(\varphi:\R^{n}\to\overline{\R}\) be proper, lsc, and prox-bounded with threshold \(\lambda_\varphi\).
			The following hold:%
			\begin{enumerate}
			\item
			\(\env_{\lambda,\mu}\varphi(x)\) with \(0<\mu<\lambda<\lambda_\varphi\) is a smooth function whose gradient is \(\max\set{\frac{1}{\mu},\frac{1}{\lambda-\mu}}\)-Lipschitz continuous.\label{fact_Lipschitz}
			\item
					\(\env_\lambda\varphi(x)
			\leq
			\env_{\lambda,\mu}\varphi(x)
			\leq
			\env_{\lambda-\mu}\varphi(x)\leq\varphi(x)\) for all \(0<\mu<\lambda<\lambda_\varphi\).\label{fact_sandwich}
				\item
					\(\inf\env_{\lambda,\mu}\varphi=\inf\env_{\lambda}\varphi=\inf\varphi\) and \(\arg\min\env_{\lambda,\mu}\varphi=\arg\min\env_{\lambda}\varphi=\arg\min\varphi\) for all \(0<\mu<\lambda<\lambda_\varphi\).\label{fact_global_optimum}
			\item
					Suppose that there exist \(0<\bar\mu<\bar\lambda<\lambda_\varphi\) such that \(\env_{\bar\lambda,\bar\mu}\varphi\) is level bounded. Then, any sequence \(x\in\varepsilon\)-\(\argmin\env_{\lambda,\mu}\varphi\) with \(\mu<\lambda\), \(\lambda\searrow0\), \(\mu\searrow0\) and \(\varepsilon\searrow0\) is bounded and has all its cluster points in \(\argmin\varphi\).\label{fact_preservation}
			\item
			Suppose further that \(\varphi\) is bounded from below,
			let \(\lambda>0\) be given and \(j\coloneqq\frac{1}{2}\|\cdot\|^2\).
			Then, \(\env_{\lambda,\mu}\varphi\) epi-converges to \(\env_{\lambda,\lambda}\varphi\) with \(\mu>0\) and \(\mu\nearrow\lambda\),
			the proximal hull is given by \(\env_{\lambda,\lambda}\varphi=(\varphi+\lambda^{-1}j){}^{**}-\lambda^{-1}j\) with domain \(\dom\env_{\lambda,\lambda}\varphi=\conv(\dom\varphi)\).\label{proposition_conv_envelope}
			\end{enumerate}
		\end{proposition}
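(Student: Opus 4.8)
The plan is to reduce all five items to properties of the single Moreau envelope together with standard convex and variational analysis, starting from the elementary identity obtained by completing the square inside the infimum,
\[
  \env_\lambda\varphi(x)=\tfrac{1}{2\lambda}\|x\|^2-\psi^*(x/\lambda),\qquad \psi\coloneqq\varphi+\tfrac{1}{2\lambda}\|\cdot\|^2,
\]
which in particular exhibits $-\env_\lambda\varphi=g-\tfrac{1}{2\lambda}\|\cdot\|^2$ with $g\coloneqq\psi^*(\cdot/\lambda)$ proper, lsc and convex. For item~1 I would insert this into $\env_{\lambda,\mu}\varphi=-\env_\mu(-\env_\lambda\varphi)$ and complete the square once more to obtain the closed form
\[
  \env_{\lambda,\mu}\varphi(x)=\tfrac{1}{2(\lambda-\mu)}\|x\|^2-\env_\sigma g\!\Bigl(\tfrac{\lambda}{\lambda-\mu}x\Bigr),\qquad \sigma\coloneqq\tfrac{\mu\lambda}{\lambda-\mu}>0.
\]
Since $g$ is convex, $\env_\sigma g$ is finite-valued and $C^1$ with gradient $\sigma^{-1}(I-\prox_{\sigma g})$, which is $\sigma$-cocoercive; hence $\env_{\lambda,\mu}\varphi$ is $C^1$, and a short estimate using cocoercivity — splitting the cases $\lambda\ge 2\mu$ and $\lambda<2\mu$ — pins the Lipschitz modulus of $\nabla\env_{\lambda,\mu}\varphi$ down to exactly $\max\{1/\mu,1/(\lambda-\mu)\}$ (equivalently, the a.e.\ Hessian of $\env_{\lambda,\mu}\varphi$ lies between $-\mu^{-1}I$ and $(\lambda-\mu)^{-1}I$).

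For item~2 the outer inequalities $\env_\lambda\varphi\le\env_{\lambda,\mu}\varphi$ and $\env_{\lambda-\mu}\varphi\le\varphi$ are immediate by choosing $x'=x$ in the defining sup, resp.\ inf. For the middle inequality I would write $\env_{\lambda,\mu}\varphi(x)=\sup_{x'}\inf_{x''}\bigl\{\varphi(x'')+\tfrac{1}{2\lambda}\|x''-x'\|^2-\tfrac{1}{2\mu}\|x'-x\|^2\bigr\}$, use weak duality $\sup\inf\le\inf\sup$, and evaluate the inner concave-quadratic maximization over $x'$, which equals $\tfrac{1}{2(\lambda-\mu)}\|x''-x\|^2$; the resulting expression is exactly $\env_{\lambda-\mu}\varphi(x)$. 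Item~3 then follows by chasing these inequalities: taking infima and invoking the classical $\inf\env_\lambda\varphi=\inf\varphi$ equates all three infima; for the argmin identities, $x\in\argmin\varphi$ forces $\env_{\lambda,\mu}\varphi(x)=\inf\varphi$ through the sandwich, while conversely $x\in\argmin\env_{\lambda,\mu}\varphi$ forces $\env_\lambda\varphi(x)=\inf\varphi$, and since $\lambda<\lambda_\varphi$ the prox is attained, which leaves $x$ itself as the only candidate minimizer of $\varphi$.

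For item~4 I would combine epi-convergence with eventual level boundedness. The liminf inequality of $\env_{\lambda^\nu,\mu^\nu}\varphi\overset{\mathrm e}{\rightarrow}\varphi$ along $x^\nu\to x$ follows from $\env_{\lambda^\nu,\mu^\nu}\varphi\ge\env_{\lambda^\nu}\varphi$ (item~2), monotonicity of $\lambda\mapsto\env_\lambda\varphi$, and the pointwise limit $\env_{\lambda_0}\varphi\to\varphi$ as $\lambda_0\searrow0$; the limsup inequality is trivial from $\env_{\lambda^\nu,\mu^\nu}\varphi\le\varphi$ via the constant recovery sequence. For level boundedness, whenever $\lambda^\nu\le\bar\lambda-\bar\mu$ the sandwich and monotonicity give $\env_{\lambda^\nu,\mu^\nu}\varphi\ge\env_{\lambda^\nu}\varphi\ge\env_{\bar\lambda-\bar\mu}\varphi\ge\env_{\bar\lambda,\bar\mu}\varphi$, so every sublevel set of $\env_{\lambda^\nu,\mu^\nu}\varphi$ sits inside the corresponding bounded sublevel set of $\env_{\bar\lambda,\bar\mu}\varphi$. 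If $x^\nu$ is an $\varepsilon^\nu$-minimizer of $\env_{\lambda^\nu,\mu^\nu}\varphi$, then item~3 yields $\env_{\lambda^\nu,\mu^\nu}\varphi(x^\nu)\le\inf\varphi+\varepsilon^\nu$, bounded above, so $\{x^\nu\}$ eventually lives in a fixed bounded set; and for a cluster point $x=\lim_{\nu\in K}x^\nu$ the liminf inequality gives $\varphi(x)\le\liminf_{\nu\in K}\env_{\lambda^\nu,\mu^\nu}\varphi(x^\nu)\le\inf\varphi$, i.e.\ $x\in\argmin\varphi$. (This is also a direct instance of the standard convergence theorem for approximate minimizers of an eventually level-bounded epi-convergent family.)

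For item~5, substituting $\env_\lambda\varphi(x')=\tfrac{1}{2\lambda}\|x'\|^2-\psi^*(x'/\lambda)$ into $\env_{\lambda,\lambda}\varphi(x)=\sup_{x'}\{\env_\lambda\varphi(x')-\tfrac{1}{2\lambda}\|x'-x\|^2\}$ cancels the $\tfrac{1}{2\lambda}\|x'\|^2$ terms, and the change of variable $u=x'/\lambda$ leaves $\sup_u\{\langle u,x\rangle-\psi^*(u)\}-\lambda^{-1}j(x)=\psi^{**}(x)-\lambda^{-1}j(x)$, that is $\env_{\lambda,\lambda}\varphi=(\varphi+\lambda^{-1}j)^{**}-\lambda^{-1}j$; since $\varphi$ is bounded below, $\psi$ is proper, lsc and supercoercive, so a minimizing Carathéodory combination always exists, $\conv\psi$ is already lsc, and hence $\psi^{**}=\conv\psi$ with $\dom\env_{\lambda,\lambda}\varphi=\dom\psi^{**}=\conv(\dom\varphi)$. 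Finally, monotonicity of $\mu\mapsto\env_{\lambda,\mu}\varphi$ (larger $\mu$, smaller penalty, larger value) together with interchanging the two suprema shows $\env_{\lambda,\mu}\varphi\nearrow\env_{\lambda,\lambda}\varphi$ pointwise as $\mu\nearrow\lambda$, and a pointwise-increasing sequence of lsc (here, by item~1, even $C^1$) functions epi-converges to its pointwise supremum, giving $\env_{\lambda,\mu}\varphi\overset{\mathrm e}{\rightarrow}\env_{\lambda,\lambda}\varphi$. The step I expect to be the main obstacle is the sharp gradient-Lipschitz modulus in item~1: the cocoercivity bookkeeping has to be carried out carefully, with the case split, to land exactly on $\max\{1/\mu,1/(\lambda-\mu)\}$ rather than a coarser constant — with the level-boundedness transfer in item~4 and the closedness of $\conv\psi$ in item~5 being the secondary points that need attention.
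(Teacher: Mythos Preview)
Your proposal is correct; each of the five arguments goes through as sketched, including the sharp Lipschitz constant in item~1 (the Hessian sandwich $-\mu^{-1}I\preceq\nabla^2\env_{\lambda,\mu}\varphi\preceq(\lambda-\mu)^{-1}I$ indeed falls out of the closed form $\env_{\lambda,\mu}\varphi(x)=\tfrac{1}{2(\lambda-\mu)}\|x\|^2-\env_\sigma g(\tfrac{\lambda}{\lambda-\mu}x)$ together with cocoercivity of $\nabla\env_\sigma g$), and the domain identity in item~5 (supercoercivity of $\psi=\varphi+\lambda^{-1}j$ ensures $\conv\psi$ is already closed, so $\psi^{**}=\conv\psi$ and $\dom\psi^{**}=\conv(\dom\varphi)$).

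The route, however, is rather different from the paper's. The paper dispatches items~1--4 by direct citation (Attouch~1993, Rockafellar--Wets Examples~1.44 and~1.46, Simoes et~al.), and for item~5 it relies on the identity $\env_{\lambda,\mu}\varphi=\env_{\lambda-\mu}(\env_{\lambda,\lambda}\varphi)$ from Rockafellar--Wets Example~1.46, so that epi-convergence as $\mu\nearrow\lambda$ reduces to the standard pointwise monotone convergence $\env_{\lambda-\mu}h\nearrow h$ of the Moreau envelope of the proper lsc function $h=\env_{\lambda,\lambda}\varphi$. Your approach is more self-contained: you rebuild item~1 from the conjugate representation of $\env_\lambda\varphi$ and an explicit cocoercivity estimate, prove the sandwich in item~2 via weak duality and an explicit quadratic maximization, and for item~5 obtain the epi-convergence directly from monotonicity of $\mu\mapsto\env_{\lambda,\mu}\varphi$ and the interchange of suprema, without invoking the $\env_{\lambda-\mu}(\env_{\lambda,\lambda}\varphi)$ factorization. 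What this buys you is independence from the cited sources and a transparent derivation of the sharp constant in item~1; what the paper's approach buys is brevity and a clean reduction of the double envelope to single-envelope facts already catalogued in Rockafellar--Wets.
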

		
		\begin{proof}
		\cref{fact_Lipschitz} follows from~\cite[Theorem 4.1]{Attouch_1993},~\cite[Proposition 12.62]{Rockafellar_1998} and~\cite[Proposition 2.3]{Simoes_2021};
		\cref{fact_sandwich,fact_global_optimum} are derived from~\cite[Example 1.46]{Rockafellar_1998};
		\cref{fact_preservation} is based on~\cite[Theorem 3.1]{Simoes_2021}.
		
		Regarding \cref{proposition_conv_envelope},
		since \(\varphi\) is bounded from below, \cref{fact_global_optimum} and~\cite[Example 1.44]{Rockafellar_1998} yield \(\inf\env_{\lambda,\lambda}\varphi=\inf\env_{\lambda}\varphi=\inf\varphi>-\infty\).
		By~\cite[Exercise 1.24 and Example 1.44]{Rockafellar_1998}, it follows that \(\env_{\lambda,\lambda}\varphi\) is proper, lsc and prox-bounded with threshold \(+\infty\).
		Furthermore, the Lasry--Lions double envelope satisfies \(\env_{\lambda,\mu}\varphi=\env_{\lambda-\mu}(\env_{\lambda,\lambda}\varphi)\)~\cite[Example 1.46]{Rockafellar_1998}.
		Since \(\env_{\lambda,\lambda}\varphi\) is proper and lsc, \cite[Theorem 1.25]{Rockafellar_1998} implies pointwise convergence \(\env_{\lambda-\mu}(\env_{\lambda,\lambda}\varphi)(x)\nearrow\env_{\lambda,\lambda}\varphi(x)\) for all \(x\) as \(\lambda-\mu\searrow0\).
		Finally, with the aid of~\cite[Proposition 7.4(d)]{Rockafellar_1998}, we conclude that \(\env_{\lambda,\mu}\varphi\overset{\mathrm{e}}{\rightarrow}\env_{\lambda,\lambda}\varphi\) as \(\mu\nearrow\lambda\).
		The domain \(\dom\env_{\lambda,\lambda}\varphi=\conv(\dom\varphi)\) and formulation \(\env_{\lambda,\lambda}\varphi=(\varphi+\lambda^{-1}j){}^{**}-\lambda^{-1}j\) follow from~\cite[Example 11.26(c)]{Rockafellar_1998}.
		\end{proof}
		
		\cref{proposition_conv_envelope} establishes the epi-convergence of the Lasry--Lions double envelope to the proximal hull of the original function.
		Furthermore, the expression \((\varphi+\lambda^{-1}j){}^{**}-\lambda^{-1}j\) shows that the proximal hull \(\env_{\lambda,\lambda}\varphi\) approaches the closed convex hull \(\varphi^{**}\) as \(\lambda\to+\infty\).
		Therefore, this result provides an important insight for algorithm design: one can start with large values of \(\lambda\) and \(\mu\), such that \(\lambda-\mu\) is small, yielding a nearly convex and smooth surrogate function,
		and then gradually decrease \(\lambda\) and \(\mu\) over iterations to improve approximation accuracy.
		On the other hand, the following lemma establishes the epi-convergence of the Lasry--Lions double envelope to the original function, which is crucial for guaranteeing the convergence of the minima and minimizers of the surrogate problems to those of the original problem.
		
		\begin{lemma}[Epi-convergence to the original function]\label{lemma_epi_converge_double_envelope}
		Let $\varphi:\R^{n}\to\overline{\R}$ be proper, lsc, and prox-bounded with threshold \(\lambda_\varphi\). If $\lambda$ and $\mu$ satisfy $\lambda\searrow0$ and $\mu\searrow0$ with $0<\mu<\lambda<\lambda_\varphi$, then  $\env_{\lambda,\mu}\varphi$ epi-converges to $\varphi$.
		\end{lemma}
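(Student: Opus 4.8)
The plan is to establish epi-convergence by verifying the two standard characterizing inequalities for $\env_{\lambda,\mu}\varphi \overset{\mathrm{e}}{\to} \varphi$ as $(\lambda,\mu)\searrow(0,0)$: namely (i) for every $x$ and every sequence $x^\nu\to x$ (with parameters $\lambda_\nu,\mu_\nu\searrow0$, $0<\mu_\nu<\lambda_\nu<\lambda_\varphi$) one has $\liminf_\nu \env_{\lambda_\nu,\mu_\nu}\varphi(x^\nu)\geq\varphi(x)$, and (ii) for every $x$ there exists a sequence $x^\nu\to x$ with $\limsup_\nu \env_{\lambda_\nu,\mu_\nu}\varphi(x^\nu)\leq\varphi(x)$. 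The main tool is the sandwich inequality from \cref{fact_sandwich}, which gives $\env_{\lambda}\varphi \leq \env_{\lambda,\mu}\varphi \leq \env_{\lambda-\mu}\varphi$ for all $0<\mu<\lambda<\lambda_\varphi$. This reduces the problem to the well-known epi-convergence behavior of the ordinary Moreau envelope, a "sandwich" or squeeze argument at the level of epi-limits.

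First I would recall the classical fact (e.g.\ \cite[Theorem 1.25]{Rockafellar_1998}) that if $\varphi$ is proper, lsc and prox-bounded, then $\env_\lambda\varphi \overset{\mathrm{e}}{\to}\varphi$ as $\lambda\searrow0$; in fact $\env_\lambda\varphi(x)\nearrow\varphi(x)$ pointwise, and the pointwise monotone convergence of lsc functions to an lsc function upgrades to epi-convergence. For the lower bound (i): using $\env_{\lambda_\nu,\mu_\nu}\varphi \geq \env_{\lambda_\nu}\varphi$ and the epi-convergence $\env_{\lambda_\nu}\varphi\overset{\mathrm{e}}{\to}\varphi$, the $\liminf$ condition of epi-convergence applied to the sequence $\{\env_{\lambda_\nu}\varphi\}$ at $x^\nu\to x$ yields $\liminf_\nu\env_{\lambda_\nu,\mu_\nu}\varphi(x^\nu)\geq\liminf_\nu\env_{\lambda_\nu}\varphi(x^\nu)\geq\varphi(x)$. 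For the upper bound (ii): since $\mu_\nu<\lambda_\nu$ and both tend to $0$, we have $\lambda_\nu-\mu_\nu\searrow0$ (after passing to the relevant subsequence; note $0<\lambda_\nu-\mu_\nu<\lambda_\nu\to0$), so $\env_{\lambda_\nu-\mu_\nu}\varphi\overset{\mathrm{e}}{\to}\varphi$ as well. The $\limsup$ (recovery-sequence) part of this epi-convergence supplies, for the given $x$, a sequence $x^\nu\to x$ with $\limsup_\nu\env_{\lambda_\nu-\mu_\nu}\varphi(x^\nu)\leq\varphi(x)$, and then $\env_{\lambda_\nu,\mu_\nu}\varphi(x^\nu)\leq\env_{\lambda_\nu-\mu_\nu}\varphi(x^\nu)$ gives the desired bound. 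Combining (i) and (ii) yields $\env_{\lambda,\mu}\varphi\overset{\mathrm{e}}{\to}\varphi$.

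One technical point I would handle carefully: epi-convergence here is along the continuous parameter regime $\lambda\searrow0,\mu\searrow0$ with $\mu<\lambda$, so strictly speaking I should phrase everything via arbitrary sequences $(\lambda_\nu,\mu_\nu)$ obeying these constraints, rather than a single fixed sequence, and check that the characterization of epi-convergence is insensitive to this (it is, since epi-convergence of a parametrized family is defined as epi-convergence along every admissible sequence). A second subtlety is that $\lambda_\nu-\mu_\nu$ need not be monotone even if $\lambda_\nu$ and $\mu_\nu$ are; but monotonicity is not needed — only $\lambda_\nu-\mu_\nu\to0^+$ — so invoking $\env_{\lambda-\mu}\varphi\overset{\mathrm{e}}{\to}\varphi$ as $\lambda-\mu\searrow0$ (which is convergence of a net, equivalently along every sequence decreasing to $0$) is legitimate. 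I expect the only real obstacle is bookkeeping the quantifiers in the definition of epi-convergence of a two-parameter family correctly and making sure the sandwich is applied to genuinely epi-convergent flanking sequences; the analytic content is entirely carried by the classical Moreau-envelope epi-convergence and the inequality in \cref{fact_sandwich}, with no new estimates required.
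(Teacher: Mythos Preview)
Your proposal is correct and follows essentially the same approach as the paper: sandwich $\env_{\lambda,\mu}\varphi$ between $\env_\lambda\varphi$ and $\env_{\lambda-\mu}\varphi$ via \cref{fact_sandwich}, invoke the epi-convergence of the Moreau envelopes as $\lambda\searrow0$ and $\lambda-\mu\searrow0$, and conclude by a squeeze argument. The only cosmetic difference is that the paper cites the ready-made epi-sandwich result \cite[Proposition~7.4(g)]{Rockafellar_1998} rather than unpacking the $\liminf/\limsup$ characterization by hand as you do.
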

		
		\begin{proof}
		According to \cite[Theorem 1.25]{Rockafellar_1998}, we have pointwise convergence $\env_\lambda\varphi(x)\nearrow\varphi(x)$ for all $x$ as $\lambda\searrow0$.
		Then, based on~\cite[Proposition 7.4(d)]{Rockafellar_1998}, we have $\env_\lambda\varphi\overset{\mathrm{e}}{\rightarrow}\varphi$ as $\lambda\searrow0$.
		Similarly for $\env_{\lambda-\mu}\varphi\overset{\mathrm{e}}{\rightarrow}\varphi$ as $\lambda-\mu\searrow0$.
		Finally, according to the sandwich property \cref{fact_sandwich},
		we conclude $\env_{\lambda,\mu}\varphi\overset{\mathrm{e}}{\rightarrow}\varphi$ from~\cite[Proposition 7.4(g)]{Rockafellar_1998}.
		\end{proof}

		\begin{figure}[t]
		\begin{minipage}{0.49\linewidth}
			\vspace*{0pt}%
			\centering
			\includegraphics[width=.75\linewidth]{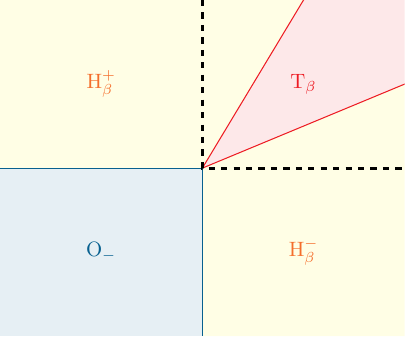}%
		\end{minipage}
		\hfill
		\begin{minipage}{0.49\linewidth}
			\caption{%
			Polygonal areas of the piecewise definition of $\env_{\lambda,\mu}\delta_{D}$.
			The thick dashed line corresponds to the complementarity constraint set $D$, while the red lines dividing the yellow $\mathrm{H}_{\beta}^{\pm}$ regions from the red $\mathrm{T}_{\beta}$ region are the graphs of $z_2=\frac{1}{1-\beta}z_1$ and $z_2=(1-\beta)z_1$ in the positive orthant, where $\beta=\nicefrac{\mu}{\lambda}\in(0,1)$.
			Apparently, as $\beta\searrow0$ the $\mathrm{T}_{\beta}$ region collapses into a half-line, while as $\beta\nearrow1$ it expands to the entire positive orthant.%
			}\label{fig:areas}
		\end{minipage}
		\end{figure}
		
		We now examine the envelopes of the indicator function associated with CCs, i.e., $\varphi=\delta_{D}$.
		The Moreau envelope of $\delta_{D}$ admits the following compact representation:
		\begin{equation}\label{moreau_compl}
		\env_\lambda\delta_D(z)
		=
			\tfrac{1}{2\lambda}\dist {(z,D)}^2
		=
			\tfrac{1}{2\lambda}
		\begin{pmatrix}
			\|z\|^{2}
			-
			\max\set{{[z_1]}_{+},{[z_2]}_{+}}^2
		\end{pmatrix}.
		\end{equation}

		We next present the explicit formulation of the Lasry--Lions double envelope of \(\delta_D\).

		\begin{theorem}[Lasry--Lions double envelope of \(\delta_D\)]\label{theorem_LL_env}
			Relative to the complementarity set \(D\) in~\eqref{CC_set}, let \(z\in\R^2\), \(\lambda>\mu>0\) and \(\beta\coloneqq\nicefrac{\mu}{\lambda}\in(0,1)\), the Lasry--Lions double envelope of the indicator function \(\delta_D\) is given by
			\begin{equation}\label{LL_env}
			\env_{\lambda,\mu}\delta_D(z)
			=
			\tfrac{1}{\lambda}r_{\beta}(z),
			\end{equation}
			with
			\begin{equation}\label{r_beta}
			r_{\beta}(z)
			=
			\left\{
			\begin{array}{llr}
			\frac{1}{2(1-\beta)}\|z\|^{2}
			& \ \ \text{if } z\in\R_{-}^{2}
			& \ \ [\mathrm{O}_{-}]
			\\ [5pt]
			\frac{1}{2\beta(2-\beta)}{(z_1+z_2)}^{2}
			-
			\frac{1}{2\beta}\|z\|^{2}
			& \ \ \text{if } (1-\beta)z_1\leq z_2\leq \frac{1}{1-\beta}z_1
			& \ \ [\mathrm{T}_{\beta}]
			\\ [5pt]
			\frac{1}{2(1-\beta)}\min\set{z_1,z_2}^2
			& \ \ \text{otherwise},
			& \ \ [\mathrm{H}_{\beta}^{\pm }]
			\end{array}
			\right.
			\end{equation}
			where the notations \(\mathrm{O}_{-},\mathrm{H}_{\beta}^{+},\mathrm{H}_{\beta}^{-},\mathrm{T}_{\beta}\) refer to the regions as labeled in \cref{fig:areas}.
		\end{theorem}
		\begin{proof}
			See \cref{appendix_theorem_LL_env}.
		\end{proof}
		
		In light of \cref{theorem_LL_env}, the gradient of \(\env_{\lambda,\mu}\delta_D(z)\) is characterized by
		\[
		\nabla\env_{\lambda,\mu}\delta_D(z)
		=
		\tfrac{1}{\lambda}R_{\beta}(z),
		\]
		where
		\begin{equation}\label{R_beta}
		R_{\beta}(z)
		=
		\left\{
		\begin{array}{llr}
		\frac{1}{1-\beta}z
		& \ \ \text{if } z\in\R_{-}^{2}
		& \ \ [\mathrm{O}_{-}]
		\\ [5pt]
		\frac{1}{\beta(2-\beta)}
			\binom{z_1+z_2}{z_1+z_2}
					-
					\tfrac{1}{\beta}z
		& \ \ \text{if } (1-\beta)z_1\leq z_2\leq \frac{1}{1-\beta}z_1
		& \ \ [\mathrm{T}_{\beta}]
		\\ [5pt]
		\frac{1}{1-\beta}\binom{z_1}{0}
		& \ \ \text{if }
		z_2\geq\frac{1}{1-\beta}{[z_1]}_{+}
		& \ \ [\mathrm{H}_{\beta}^{+}]
		\\ [5pt]
		\frac{1}{1-\beta}\binom{0}{z_2}
		& \ \ \text{if }
		z_1\geq\frac{1}{1-\beta}{[z_2]}_{+}.
		& \ \ [\mathrm{H}_{\beta}^{-}]
		\end{array}
		\right.
		\end{equation}
		\cref{fact_global_optimum} implies that the global minimum of \(r_\beta\) is \(r_\beta^\star=0\) and the minimizer set is \(D\).
		Furthermore, from the representation~\eqref{moreau_compl} and \cref{fact_sandwich}, it holds that
		\begin{equation}\label{r_beta_nonnegative}
		r_\beta(z)\geq\lambda\env_\lambda\delta_D(z)
		=
		\tfrac{1}{2}\dist {(z,D)}^2\geq0.
		\end{equation}
		The distinction between the Moreau and Lasry--Lions double envelopes is illustrated in \cref{fig:env}.
		The next proposition highlights two important properties of \(r_\beta\): the global Lipschitz continuity of the gradient and the \emph{Polyak-\L{}ojasiewicz (PL)} property.
		
		\begin{figure}[t]
		\begin{minipage}{0.4\linewidth}
			\vspace*{0pt}%
			\centering
			\includegraphics[width=\linewidth]{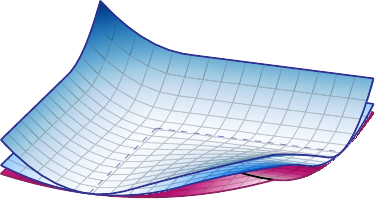}%
		\end{minipage}
		\hfill
		\begin{minipage}{0.59\linewidth}
			\caption{%
				Comparison between the Moreau and Lasry--Lions double envelopes.
				The red surface is the Moreau envelope \(\env_\lambda\delta_D\) when \(\lambda=1\).
				It is nonsmooth on \(\set{z\in\R^2}[z_1=z_2>0]\).
				The middle surface is the Lasry--Lions envelope \(\env_{\lambda,\beta\lambda}\delta_D\) when \(\lambda=1\) and \(\beta=0.5\), which is smooth everywhere and \(\env_{\lambda,\beta\lambda}\delta_D(z)\geq\env_\lambda\delta_D(z)\).
				Since \(\env_{\lambda,\beta\lambda}\delta_D\) provides an extra design freedom \(\beta\) compared to \(\env_\lambda\delta_D\), by increasing \(\beta\) one obtains a more accurate approximation of \(\delta_D(z)\), as in the top surface corresponding to \(\lambda=1\) and \(\beta=0.8\).
				Moreover, one has \(\env_{\lambda,\beta\lambda}\delta_D(z)=\env_\lambda\delta_D(z)=\delta_D(z)=0\) for \(z\in D\) (dashed line).%
			}%
			\label{fig:env}%
		\end{minipage}
		\end{figure}
		
		\begin{proposition}
			Fix \(\beta\in(0,1)\), and let functions \(r_\beta\) and \(R_\beta\) be as in \eqref{r_beta} and \eqref{R_beta}.
			The following hold:%
			\begin{enumerate}
			\item
				$R_{\beta}$ is globally Lipschitz continuous with modulus
				\(
				L_\beta
				=
				\max\set{
					\tfrac{1}{\beta},
					\tfrac{1}{1-\beta}
				}
				\).\label{proposition_globally_Lipschitz}
			\item
				\(r_\beta\) satisfies the PL inequality
			\[
			\tfrac{1}{2}
			\begin{Vmatrix}
				R_\beta(z)
			\end{Vmatrix}^2
			\geq
			l_\beta
			(r_\beta(z)-r_\beta^\star)
			\quad \forall z\in\R^2
			\]
			with \(l_\beta
			=
			\min\set{
				\tfrac{1}{1-\beta},
				\tfrac{1-\beta}{\beta(2-\beta)}
			}\) and \(r_\beta^\star=0\).\label{proposition_PL}
			\end{enumerate}
		\end{proposition}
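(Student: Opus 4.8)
The two claims are cell-wise in nature: $R_\beta$ is linear on each of the four polyhedral cones $\mathrm O_-,\mathrm H_\beta^{+},\mathrm H_\beta^{-},\mathrm T_\beta$ of \cref{fig:areas}, and, since $R_\beta$ is positively $1$-homogeneous (piecewise-linear through the origin), the maps $r_\beta$ and $z\mapsto\tfrac12\|R_\beta(z)\|^2$ are positively $2$-homogeneous, so both inequalities are scaling-invariant; the plan is thus to reduce each to a handful of elementary inequalities, one per cell. For the Lipschitz claim the quickest route is \cref{fact_Lipschitz}: since $\delta_D$ is bounded below, $\lambda_{\delta_D}=+\infty$, so for every $0<\mu<\lambda$ the envelope $\env_{\lambda,\mu}\delta_D$ is $C^1$ with $\max\{\tfrac1\mu,\tfrac1{\lambda-\mu}\}$-Lipschitz gradient; with $\mu=\beta\lambda$ and $\nabla\env_{\lambda,\mu}\delta_D=\tfrac1\lambda R_\beta$, the map $R_\beta=\lambda\nabla\env_{\lambda,\mu}\delta_D$ is $\lambda\max\{\tfrac1\mu,\tfrac1{\lambda-\mu}\}=\max\{\tfrac1\beta,\tfrac1{1-\beta}\}=L_\beta$-Lipschitz (the bound is $\lambda$-free, as it must be). A self-contained alternative: $R_\beta$ is continuous (a gradient of a $C^1$ function, or the four pieces match on the shared faces), on each cell it is a linear map of operator norm at most $L_\beta$ — namely $\tfrac1{1-\beta}$ on $\mathrm O_-$ and $\mathrm H_\beta^{\pm}$, while on $\mathrm T_\beta$ the matrix $\tfrac1{\beta(2-\beta)}\mathbf 1\mathbf 1^{\!\top}-\tfrac1\beta I$ has eigenvalues $\tfrac1{2-\beta}$ (eigenvector $\mathbf 1$) and $-\tfrac1\beta$, hence operator norm $\tfrac1\beta$ — and a continuous piecewise-linear map over a finite polyhedral subdivision is globally Lipschitz with modulus the largest of its cell-wise operator norms (bound $\|R_\beta(z)-R_\beta(z')\|$ by integrating $\|JR_\beta\|\le L_\beta$ along $[z,z']$, the kink set being null).

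For the PL inequality, since $r_\beta\ge0$ vanishes exactly on $D$, where $R_\beta$ vanishes too, it suffices to show $\tfrac12\|R_\beta(z)\|^2\ge l_\beta\,r_\beta(z)$ on each cell. On $\mathrm O_-$ one has $R_\beta(z)=\tfrac1{1-\beta}z$, $r_\beta(z)=\tfrac1{2(1-\beta)}\|z\|^2$, hence the identity $\tfrac12\|R_\beta(z)\|^2=\tfrac1{1-\beta}\,r_\beta(z)$; on $\mathrm H_\beta^{+}$ (resp.\ $\mathrm H_\beta^{-}$) one has $\min\{z_1,z_2\}=z_1$ (resp.\ $z_2$) throughout the cell, so $R_\beta(z)=\tfrac1{1-\beta}(z_1,0)$, $r_\beta(z)=\tfrac1{2(1-\beta)}z_1^2$, and again $\tfrac12\|R_\beta(z)\|^2=\tfrac1{1-\beta}\,r_\beta(z)$. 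Thus the three outer cells contribute the constant $\tfrac1{1-\beta}$, which dominates $l_\beta$.

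The cell $\mathrm T_\beta$ is the main obstacle, because there $r_\beta$ is an \emph{indefinite} quadratic form and a crude Cauchy--Schwarz bound is wasteful. The maneuver I would use is the substitution $u:=z_2-(1-\beta)z_1$, $v:=z_1-(1-\beta)z_2$, which is nonnegative on $\mathrm T_\beta$ precisely because of its defining inequalities $(1-\beta)z_1\le z_2$ and $(1-\beta)z_2\le z_1$, and which simultaneously diagonalizes everything: a direct computation gives $R_\beta(z)=\tfrac1{\beta(2-\beta)}(u,v)$ and $r_\beta(z)=\tfrac{(2-\beta)(u+v)^2-\beta(u-v)^2}{4\beta^2(2-\beta)^2}=\tfrac{(1-\beta)(u^2+v^2)+2uv}{2\beta^2(2-\beta)^2}$, so that on $\mathrm T_\beta$
\[
\frac{\tfrac12\|R_\beta(z)\|^2}{r_\beta(z)}=\frac{u^2+v^2}{(1-\beta)(u^2+v^2)+2uv}\ \ge\ \frac1{2-\beta},
\]
using nothing more than $2uv\le u^2+v^2$, with equality on the diagonal $z_1=z_2$. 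Combining the cells yields $\tfrac12\|R_\beta(z)\|^2\ge\min\{\tfrac1{1-\beta},\tfrac1{2-\beta}\}\,r_\beta(z)=\tfrac1{2-\beta}\,r_\beta(z)$ for all $z$, which is the core of the estimate. The one step I would then check most carefully is matching this sharp $\mathrm T_\beta$-constant $\tfrac1{2-\beta}$ against the entry $\tfrac{1-\beta}{\beta(2-\beta)}$ appearing in the stated $l_\beta$ (the two coincide only at $\beta=\tfrac12$), so as to confirm that the stated bound genuinely holds.
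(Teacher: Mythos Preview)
Your argument for \ref{proposition_globally_Lipschitz} is exactly the paper's: invoke \cref{fact_Lipschitz} for $\env_{\lambda,\mu}\delta_D$ and rescale by $\lambda$. For \ref{proposition_PL} on the outer cells $\mathrm O_-,\mathrm H_\beta^\pm$ you and the paper do the same one-line check, obtaining the ratio $\tfrac{1}{1-\beta}$. On $\mathrm T_\beta$ you take a different and cleaner route: the paper expands $\tfrac12\|R_\beta(z)\|^2-l_\beta r_\beta(z)$ directly in $(z_1,z_2)$ and then substitutes $l_\beta=\tfrac{1-\beta}{\beta(2-\beta)}$, whereas your change of variables $u=z_2-(1-\beta)z_1$, $v=z_1-(1-\beta)z_2$ diagonalises both quadratics and delivers the \emph{sharp} ratio $\tfrac{1}{2-\beta}$ (attained on the diagonal $z_1=z_2$) with essentially no algebra.

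Your closing caution is well placed, and in fact it is the paper that slips, not you. In the paper's displayed expansion the $z_1z_2$-coefficient coming from $\tfrac12\|R_\beta\|^2$ is written as $-\tfrac{1-\beta}{\beta^2(2-\beta)^2}$ where a correct expansion gives $-\tfrac{2(1-\beta)}{\beta^2(2-\beta)^2}$; this typo is what makes the final ``sum of squares'' identity appear to work for all $\beta$. Your sharp constant $\tfrac{1}{2-\beta}$ shows the stated $l_\beta$ is valid only when $\min\{\tfrac{1}{1-\beta},\tfrac{1-\beta}{\beta(2-\beta)}\}\le\tfrac{1}{2-\beta}$, i.e.\ when $\tfrac{1-\beta}{\beta}\le 1$, equivalently $\beta\ge\tfrac12$; for $\beta<\tfrac12$ the PL inequality with the stated $l_\beta$ fails already on the diagonal (e.g.\ at $\beta=0.4$ one has $l_\beta=0.9375>1/(2-\beta)=0.625$). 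Since the paper only ever uses $\beta$ close to $1$ this does not affect anything downstream, but the correct universal constant is the one your argument produces, $l_\beta=\tfrac{1}{2-\beta}$, and the proposition should be read with that value.
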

		
		\begin{proof}
		By \cref{fact_Lipschitz}, the Lipschitz modulus of \(\nabla\env_{\lambda,\mu}\delta_D\) is \(L_{\nabla\env_{\lambda,\mu}\delta_D}=\max\set{\frac{1}{\mu},\frac{1}{\lambda-\mu}}\).
		Moreover, we have
		\(
		R_{\beta}(z)
		=
		\lambda\nabla\env_{\lambda,\mu}\delta_D(z)
		\).
		Therefore, the Lipschitz modulus of \(R_{\beta}\) is given by
		\[
			L_\beta
			=
			\lambda L_{\nabla\env_{\lambda,\mu}\delta_D}
			=
			\max\set{
				\tfrac{1}{\beta},
				\tfrac{1}{1-\beta}
			}.
		\]
		On the other hand, it is straightforward to verify that the PL inequality holds with \(l_\beta=\tfrac{1}{1-\beta}\) when \(z\in\mathrm{O}_{-}\cup\mathrm{H}_{\beta}^{\pm}\), recalling that these regions are defined in \cref{fig:areas}.
		Let us consider the remaining case where \(z\in\mathrm{T}_{\beta}\).
		Then, we obtain
		\begin{align*}
		&
			\tfrac{1}{2}
			\begin{Vmatrix}
				R_\beta(z)
			\end{Vmatrix}^2
			-
			l_\beta r_\beta(z)
		\\
		={} &
			\tfrac{1}{2}
			\begin{pmatrix}
				\frac{{(1-\beta)}^2+1}{\beta^2{(2-\beta)}^2}
				+
				\frac{1-\beta}{\beta(2-\beta)}l_\beta
			\end{pmatrix}\|z\|^2
			-
			\begin{pmatrix}
				\frac{1-\beta}{\beta^2{(2-\beta)}^2}
				+
				\frac{1}{\beta(2-\beta)}l_\beta
			\end{pmatrix}z_1z_2.
		\end{align*}
		By letting \(l_\beta=\tfrac{1-\beta}{\beta(2-\beta)}\), we further have
		\[
		\tfrac{1}{2}
		\begin{Vmatrix}
			R_\beta(z)
		\end{Vmatrix}^2
		-
		l_\beta r_\beta(z)
		=
		\tfrac{{(1-\beta)}^2}{\beta^2{(2-\beta)}^2}
		\begin{pmatrix}
			z_1-z_2
		\end{pmatrix}^2
		+
		\tfrac{1}{2\beta^2{(2-\beta)}^2}
		\|z\|^2
		\geq0,
		\]
		yielding the claimed inequality.
		\end{proof}
		
		The Lasry--Lions double envelopes in \cref{definition_LL} were originally introduced by~\cite{Lasry_1986} as approximations that can exhibit better behavior than the Moreau envelopes; for further details, see also~\cite{Attouch_1993,Simoes_2021}.
		Indeed, the properties established in this section make \(r_\beta\) particularly well suited for smoothing the highly combinatorial structure of CCs; see \cref{fig_example_kth3} below for an illustrative example.
		Therefore, it plays a central role in the design and analysis of the method proposed in \cref{sec_infinite,sec_finite}.

	%% ██ 4. Homotopy for MPCC ███████████████████████████████████████████████████████████████████████████████████████████
	\section{A Homotopy Approach for MPCC}\label{sec_infinite}
		Rather than addressing the original problem~\eqref{MPCC_simple} directly, we adopt an alternative strategy by solving a sequence of smoothed subproblems.
		They are formally defined as follows, given two parameters \(\lambda>\mu>0\):
		\begin{equation}\label{LL_subproblem}
		\minimize_{x\in C}
		\ \
		f(x)
		+
		\sum_{i=1}^{p}
		\env_{\lambda,\mu}\delta_{D}(F_{i}(x)).
		\end{equation}
		The stationary points \(x\) of~\eqref{LL_subproblem} are characterized by
		\begin{equation}\label{LL_KKT}
		\nabla f(x)
		+
		\tfrac{1}{\lambda}
		\sum_{i=1}^{p}
		JF_{i}{(x)}^{\top}
		R_{\beta}(F_{i}(x))
		+
		N_{C}(x)
		\ni
		0,
		\end{equation}
		where we remind that \(\beta:=\nicefrac{\mu}{\lambda}\in(0,1)\) and \(R_{\beta}:\R^{2}\to\R^{2}\) is the Lipschitz mapping defined in~\eqref{R_beta}.
		An approximate stationary point at iteration \(\nu\), computed with parameters \(\lambda^\nu\) and \(\beta^\nu\), is denoted by \(x^\nu\).
		This point is then used to initialize the next smoothed subproblem with updated parameters \(\lambda^{\nu+1}\) and \(\beta^{\nu+1}\).
		
		\begin{figure}[h]\centering
			\begin{subfigure}{0.32\textwidth}
			\includegraphics[width=\linewidth]{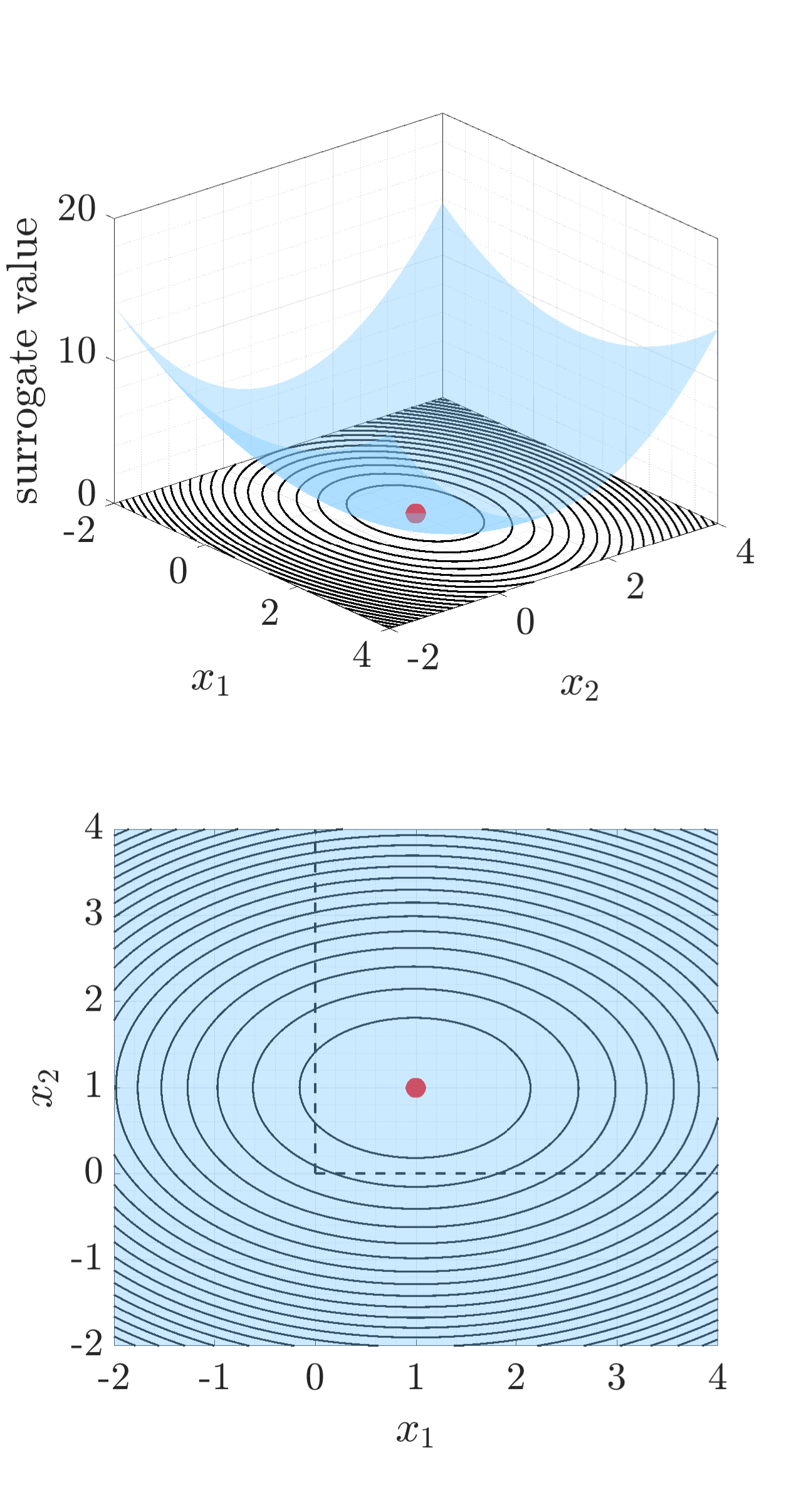}
			\end{subfigure}%
			\begin{subfigure}{0.32\textwidth}
			\includegraphics[width=\linewidth]{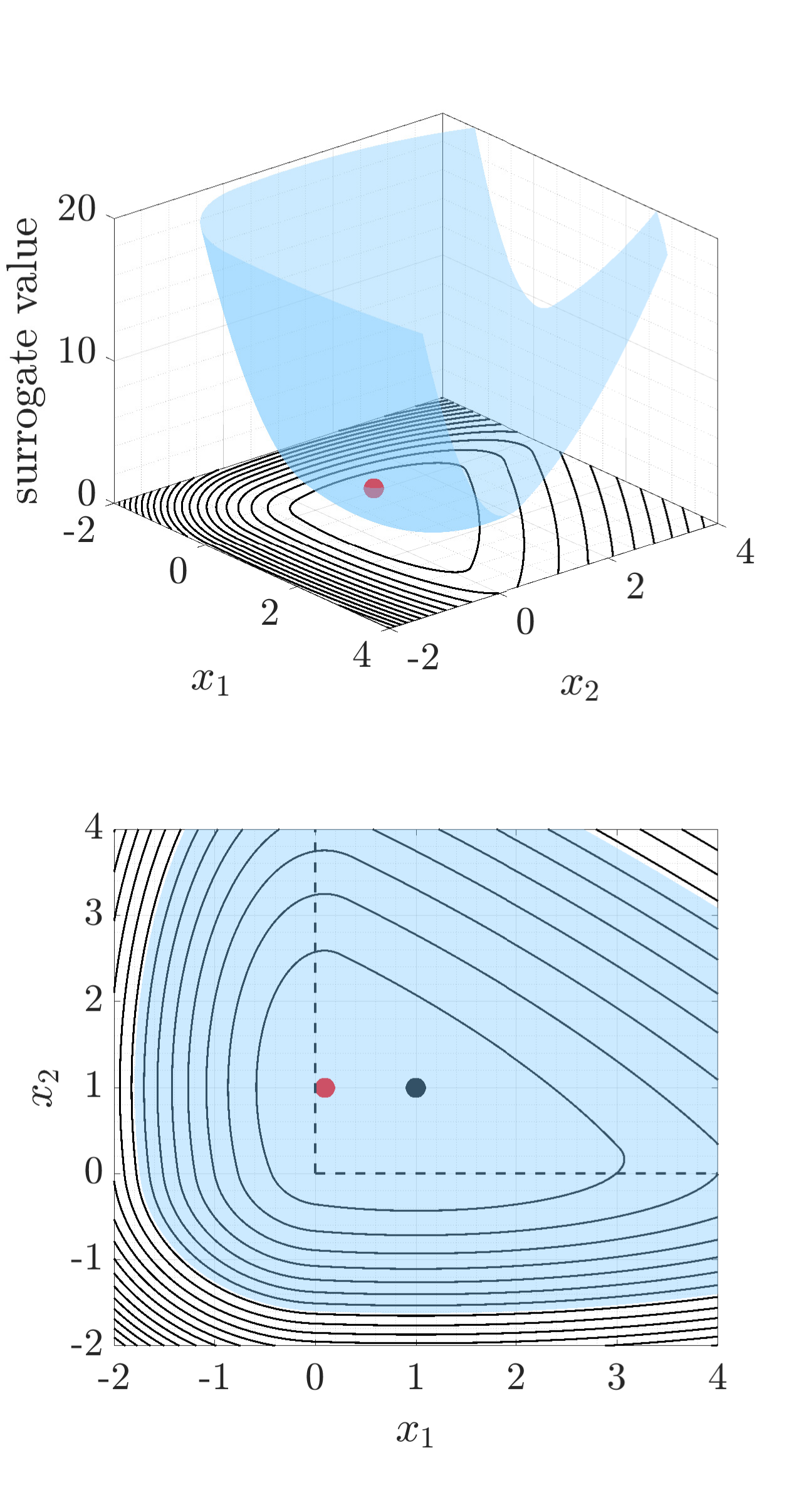}
			\end{subfigure}%
			\begin{subfigure}{0.32\textwidth}
			\includegraphics[width=\linewidth]{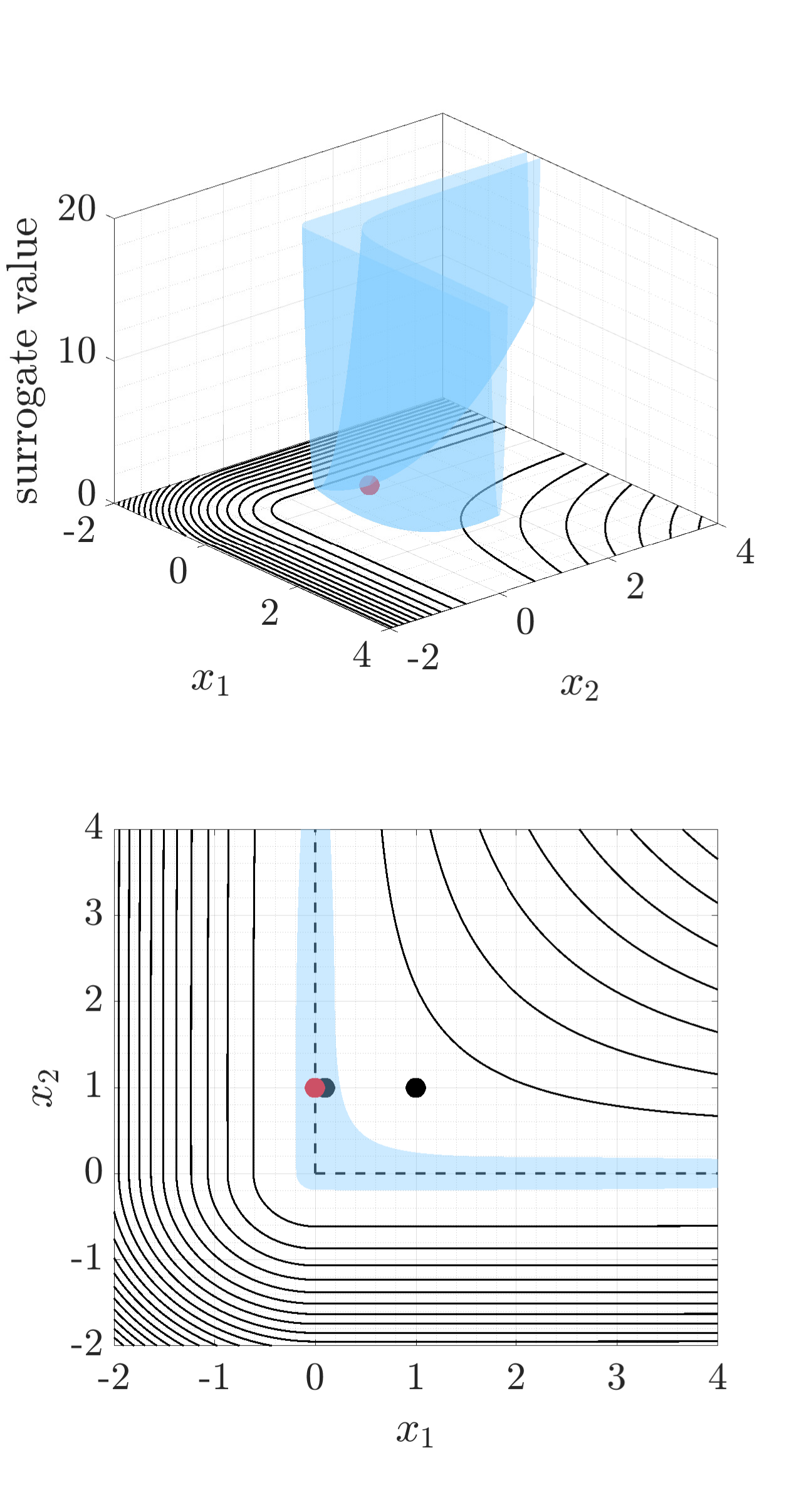}
			\end{subfigure}%
			\caption{
			Solving the \emph{kth3}~\eqref{example_kth3} via the LL-based homotopy approach.
			The red dot indicates the current subproblem's solution, while the black dot indicates the solution from the previous subproblem.
			The black curves are contours of \(s_{\lambda,\beta}\).
			The dashed line represents the feasible set.
			The bottom row of subfigures provides a top-down view of the subfigures on top.}%
			\label{fig_example_kth3}
		\end{figure}
		
		\begin{example}[kth3]
		The basic idea of our Lasry--Lions (LL)-based homotopy approach is illustrated by using the simple example \emph{kth3} from the MacMPEC collection~\cite{Leyffer_2000}.
		Consider the MPCC\@:
		\begin{equation}\label{example_kth3}
			\begin{split}
			\minimize_{x\in\R^2}
			\ \
			& 0.5{(x_1-1)}^2+{(x_2-1)}^2
			\\
			\stt\ \
			& 0\leq x_1\perp x_2\geq0,
			\end{split}
		\end{equation}
		whose global minimizer is \((0,1)\).
		The associated subproblem is
		\[
			\minimize_{x\in\R^2}
			s_{\lambda,\beta}(x)
			=
			0.5{(x_1-1)}^2+{(x_2-1)}^2
			+
			\tfrac{1}{\lambda}
			r_{\beta}(x).
		\]
		Let \(\beta=0.9\) be fixed and start with \(\lambda^1=100\).
		As shown in the left column of \cref{fig_example_kth3}, and in light of \cref{proposition_conv_envelope}, the resulting smoothed subproblem is nearly convex and well-conditioned.
		This allows it to be efficiently solved by first-order methods, yielding the solution \(x^1=(1,1)\).
		Next, set \(\lambda^2=1\).
		The corresponding subproblem, depicted in the middle column of \cref{fig_example_kth3}, yields the solution \(x^2=(0.1,1)\).
		Finally, with \(\lambda^3=0.01\), the subproblem grows ill-conditioned, as shown in the right column of \cref{fig_example_kth3}.
		While this improves the approximation of the feasible set, it would typically hinder convergence due to numerical instability.
		However, thanks to warm-starting from the previous solution \(x^2\), the algorithm avoids the local minimizer \((1,0)\) and successfully recovers the global minimizer \(x^3=(0,1)\).
		\end{example}
		
		Building on the above notation, we introduce
		\begin{equation}\label{function_s}
		s(x)
		\coloneqq
		f(x)
		+
		\sum_{i=1}^{p}
		\delta_D(F_i(x))
		\quad
		\text{and}
		\quad
		s_{\lambda,\beta}(x)
		\coloneqq
		f(x)
		+
		\tfrac{1}{\lambda}
		\sum_{i=1}^{p}
		r_{\beta}(F_{i}(x)).
		\end{equation}
		Then, according to \cref{lemma_epi_converge_double_envelope} and~\cite[Lemma 5.1 and Proposition 5.2]{Burke_2013}, given two sequences $\set{\lambda^{\nu}}\subset\R_{++}$ and $\set{\beta^{\nu}}\subset(0,1)$ with $\lambda^{\nu}\searrow 0$ and $\limsup_{\nu\in\N}\beta^{\nu}<1$, we have
		\begin{equation}\label{epi_converge_s}
		s_{\lambda^\nu,\beta^\nu}
		\overset{\mathrm{e}}{\to}
		s.
		\end{equation}
		As shown in~\eqref{LL_KKT}, \(y_i^\nu=\frac{1}{\lambda^\nu}R_{\beta^\nu}(H_i(x^\nu))\) serves as the multiplier, thus the goal is to show that the sequence \(\set{y_i^\nu}\) is bounded and its limit points belong to either the cone \(N_D^{\rm C}(F_i(x))\) or \(N_D(F_i(x))\) with \(\set{x^\nu}\to x\).
		To this end, we first discuss the properties of \(\set{y_i^\nu}\).
		
		\begin{lemma}\label{lemma_y_unbounded_bounded}
		Given two sequences $\set{\lambda^{\nu}}\subset\R_{++}$ and $\set{\beta^{\nu}}\subset(0,1)$ with $\lambda^{\nu}\searrow 0$ and $\limsup_{\nu\in\N}\beta^{\nu}<1$,
		let \(\set{y^{\nu}}\) be a sequence such that $y^\nu=\tfrac{1}{\lambda^\nu}R_{\beta^\nu}(z^\nu)$ for all $\nu\in\N$ for some $\set{z^{\nu}}\to z\in D$, and let \(y\) be any cluster point of the sequence \(\set{y^{\nu}}\) or (provided that \(y^\nu\neq0\) for all \(\nu\)) of \(\set{\frac{y^{\nu}}{\|y^{\nu}\|}}\).
		Then, it holds that $y\in N_D^{\rm C}(z)$.
		Furthermore, a sufficient condition for the inclusion \(y\in N_D(z)\) to hold  (as opposed to \(y\in N_D^{\rm C}(z)\)) is the existence of \(\xi\in\R_{++}\) and \(k\in\N\) such that \(z^\nu\notin\B(0,\xi)\cap\R^2_{++}\) holds for all \(\nu\geq k\).
		\end{lemma}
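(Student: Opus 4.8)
The plan is to work directly with the piecewise description of $R_\beta$ in~\eqref{R_beta}, classifying the cluster point according to which of the four regions $\mathrm{O}_-$, $\mathrm{H}_\beta^+$, $\mathrm{H}_\beta^-$, $\mathrm{T}_\beta$ contains infinitely many $z^\nu$. Since $z\in D$, we have $z=(z_1,z_2)$ with $z_1z_2=0$ and $z_1,z_2\ge 0$; the three cases to keep in mind for the target cone are $z_1>0=z_2$ (where $N_D(z)=N_D^{\mathrm C}(z)=\{0\}\times\R$), $z_2>0=z_1$ (symmetric), and $z=(0,0)$ (where $N_D^{\mathrm C}((0,0))=\R_-^2\cup\R_+^2$ and $N_D((0,0))=\R_-^2\cup D$). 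First I would observe that $R_\beta$ is positively homogeneous of degree one on each region, so the argument for a cluster point of $\{y^\nu\}$ and for a cluster point of $\{y^\nu/\|y^\nu\|\}$ is essentially the same: in the normalized case one simply divides $R_{\beta^\nu}(z^\nu)$ by $\|R_{\beta^\nu}(z^\nu)\|$, which cancels the $1/\lambda^\nu$ factor and stays inside the same cone. Then I would pass $\nu\to\infty$ along the relevant subsequence, using $z^\nu\to z$, $\lambda^\nu\searrow0$, and $\limsup\beta^\nu<1$ (so $1/(1-\beta^\nu)$ stays bounded, while $1/\lambda^\nu\to\infty$).

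The case analysis goes as follows. If infinitely many $z^\nu$ lie in $\mathrm{O}_-$, then their limit $z$ lies in the closed nonpositive orthant, but $z\in D$ forces $z=(0,0)$; on $\mathrm{O}_-$ we have $y^\nu=\tfrac{1}{\lambda^\nu(1-\beta^\nu)}z^\nu$, a nonpositive multiple scaling, so any cluster point lies in $\R_-^2\subseteq N_D((0,0))$, and the normalized version lies in $\R_-^2$ as well. If infinitely many $z^\nu$ lie in $\mathrm{H}_\beta^+$, i.e. $z_2^\nu\ge\frac{1}{1-\beta^\nu}[z_1^\nu]_+$, then $y^\nu=\tfrac{1}{\lambda^\nu(1-\beta^\nu)}(z_1^\nu,0)$, so every cluster point is of the form $(t,0)$ with $t\in\R$ — and here the sign of $t$ is governed by the sign of $z_1^\nu$: since the region condition forces $[z_1^\nu]_+\le(1-\beta^\nu)z_2^\nu$, in the limit either $z_1>0$ (then $t\ge0$, consistent with $\{0\}\times\R$ — wait, $(t,0)$ with $z_1>0$ means we are on the $z_1>0$ branch where the admissible cone is $\{0\}\times\R$, so I must check $t=0$; indeed if $z_1>0$ then eventually $z_1^\nu>0$ is bounded away from $0$ while the region inequality $z_1^\nu\le(1-\beta^\nu)z_2^\nu$ plus $z^\nu\to z\in D$ with $z_1>0$ would force $z_2^\nu\to z_2$ with $z_2\ge z_1/(1-\beta)>0$, contradicting $z\in D$; hence $z_1=0$, $z=(0,0)$ or $z=(0,z_2)$ with $z_2>0$, and $(t,0)\in N_D^{\mathrm C}(z)$ in both sub-cases) . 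The $\mathrm{H}_\beta^-$ case is symmetric. If infinitely many $z^\nu$ lie in $\mathrm{T}_\beta$, then $(1-\beta^\nu)z_1^\nu\le z_2^\nu\le\frac{1}{1-\beta^\nu}z_1^\nu$ forces $z_1^\nu,z_2^\nu$ to have the same sign and comparable magnitude; combined with $z^\nu\to z\in D$ this forces $z=(0,0)$, and $R_\beta(z^\nu)=\tfrac{1}{\beta^\nu(2-\beta^\nu)}(z_1^\nu+z_2^\nu)(1,1)-\tfrac{1}{\beta^\nu}z^\nu$; a short computation shows each component has a definite sign (both nonnegative when $z_1^\nu,z_2^\nu\ge0$, both nonpositive otherwise), so cluster points lie in $\R_+^2\cup\R_-^2$, hence in $N_D^{\mathrm C}((0,0))$. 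Assembling the cases gives $y\in N_D^{\mathrm C}(z)$.

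For the last assertion, suppose $z^\nu\notin\B(0,\xi)\cap\R_{++}^2$ for all $\nu\ge k$. The only way the conclusion can be $y\in N_D^{\mathrm C}(z)\setminus N_D(z)$ is when $z=(0,0)$ and $y\in\R_+^2\setminus D$ (i.e. $y$ has both components strictly positive), and from the case analysis such a $y$ can only arise in the $\mathrm{T}_\beta$ case (or the boundary $\mathrm{H}_\beta^{\pm}$ cases producing $(t,0)$ or $(0,t)$ with $t>0$, which lie in $D$, hence are harmless). So I would show: for all large $\nu$, $z^\nu\notin\mathrm{T}_\beta\cap\R_{++}^2$, or more precisely that the $\mathrm{T}_\beta$-branch contribution cannot produce a limiting $y$ in the open positive orthant. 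Indeed, if $z^\nu\in\mathrm{T}_{\beta^\nu}$ and $z^\nu\to(0,0)$ then the region inequalities with $\limsup\beta^\nu<1$ force $z_1^\nu,z_2^\nu$ of the same sign; if they were nonpositive, $R_{\beta^\nu}(z^\nu)$ would be in $\R_-^2$ (consistent with $N_D$); if they were positive, then $z^\nu\in\R_{++}^2$ and $\|z^\nu\|\to0<\xi$ eventually, contradicting the hypothesis for $\nu\ge k$. Hence eventually the only surviving branch contributions land in $\R_-^2\cup(\{0\}\times\R)\cup(\R\times\{0\})\subseteq N_D((0,0))$, so $y\in N_D(z)$. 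The main obstacle I anticipate is bookkeeping the boundary/degenerate configurations — in particular making sure the region inequalities in~\eqref{R_beta}, which involve $\frac{1}{1-\beta^\nu}$ and $[\cdot]_+$, behave correctly in the limit when $z_1^\nu$ or $z_2^\nu$ changes sign along the sequence — rather than any deep difficulty; everything reduces to the explicit formula for $R_\beta$ and the outer semicontinuity of $N_D$ already noted in the preliminaries.
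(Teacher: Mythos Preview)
Your proposal is correct and follows essentially the same approach as the paper: a case analysis on the four regions $\mathrm{O}_-$, $\mathrm{H}_\beta^\pm$, $\mathrm{T}_\beta$ of the piecewise definition of $R_\beta$, identifying in each case the cone containing the cluster point, and then observing that only the $\mathrm{T}_\beta$ branch can produce a limit in $\R_{++}^2$, which the extra hypothesis rules out.

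Two minor remarks. First, the paper organizes the cases slightly differently, treating $z\in D\setminus\{0\}$ first (where $\limsup\beta^\nu<1$ forces $z^\nu$ eventually into a single $\mathrm{H}_\beta^\pm$ region), which avoids the detour you take in the $\mathrm{H}_\beta^+$ case of ruling out $z_1>0$ by contradiction; your reasoning there is fine, just longer. Second, note that the inequalities defining $\mathrm{T}_\beta$ already force $\mathrm{T}_\beta\subseteq\R_+^2$ (the interval $[(1-\beta)z_1,\tfrac{1}{1-\beta}z_1]$ is empty for $z_1<0$), so your ``nonpositive'' sub-case in the $\mathrm{T}_\beta$ branch is vacuous; this does not affect the argument.
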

		
		\begin{proof}
		Let \(\mathrm{O}_{-}\), \(\mathrm{H}_{\beta}^{\pm}\) and \(\mathrm{T}_{\beta}\) denote the regions of \(\R^2\) as in \cref{fig:areas},
		let \(y\) be a limit point of a subsequence \(\set{\frac{y^{\nu}}{\|y^{\nu}\|}}_{\nu\in K}\) or \(\set{y^{\nu}}_{\nu\in K}\).
		We consider the following cases:
		\begin{itemize}
		\item
			Suppose that \(z\in D\setminus\set{0}\).
			Without loss of generality, we consider \(z=(q,0)\) for some \(q>0\).
			Having assumed \(\sup_{\nu\in\N}\beta^\nu<1\), any sequence \(z^\nu\to z\) eventually belongs to $\mathrm{H}_{\beta^{\nu}}^{-}$. Therefore, for \(\nu\) sufficiently large, one has
			\[
				y^\nu
			=
				\tfrac{1}{\lambda^\nu}R_{\beta^\nu}(z^\nu)
			=
				\frac{1}{\lambda^\nu(1-\beta^\nu)}
				\binom{0}{z^\nu_2}.
			\]
			It follows that \(y\) belongs to \(\set{0}\times\R=\hat{N}_D((q,0))\subseteq N_D^{\rm C}(z)\).
		\item
			Suppose that \(z=0\) and that \(z^\nu\in\mathrm{H}_{\beta^{\nu}}^{-}\) holds for all \(\nu\in K\).
			Then, for all \(\nu\in K\) one again obtains the above relation.
			Therefore, \(y\) belongs to \(\set{0}\times\R=N_D((0,0))\subseteq N_D^{\rm C}(z)\).
			\item
			Suppose that \(z=0\) and that \(z^\nu\in\mathrm{O}_{-}\) holds for all \(\nu\in K\).
			Then, for all \(\nu\in K\) one has
			\[
				y^\nu
			=
				\tfrac{1}{\lambda^\nu}R_{\beta^\nu}(z^k)
			=
				\frac{1}{\lambda^\nu(1-\beta^\nu)}
				\binom{z^\nu_1}{z^\nu_2}
				\leq\binom{0}{0},
			\]
			which implies that \(y\) belongs to \(\R_-^2=\hat{N}_D((0,0))\subseteq N_D^{\rm C}(z)\).
		\item
			Suppose that \(z=0\) and that \(z^\nu\in\mathrm{T}_{\beta^{\nu}}\) holds for all \(\nu\in K\).
			Then, for all \(\nu\in K\) one has
			\[
			y^\nu
			=
				\tfrac{1}{\lambda^\nu}R_{\beta^\nu}(z^\nu)
			=
				\frac{1}{\lambda^\nu\beta^\nu(2-\beta^\nu)}
				\binom{z_2^\nu-(1-\beta^\nu)z_1^\nu}{z_1^\nu-(1-\beta^\nu)z_2^\nu}
			\geq
			\binom{0}{0},
			\]
			which implies that \(y\) belongs to \(\R_+^2\subseteq N_D^{\rm C}(z)\).
			\end{itemize}
			In this way, we conclude that \(y\in N_D^{\rm C}(z)\),
			and that \(y\in N_D(z)\) as long as \(z^\nu\notin\interior \mathrm{T}_{\beta^\nu}\) eventually always holds.
			This latter condition is guaranteed by the sufficient condition provided in the statement.
		\end{proof}
		
		The notation of well-behaved CCs in \cref{definition_well_CC} allows us to exclude the problematic region \(\mathrm{T}_{\beta}\) from the convergence analysis.
		In particular, by using the sufficient conditions in \cref{lemma_y_unbounded_bounded}, we establish the following result.
		
		\begin{corollary}\label{lemma_well_behaved_CC}
		Suppose that \(F_i\) is a well-behaved CC for some \(i\in\set{1,\ldots,p}\), and consider a sequence \(\set{x^\nu}\subset\R^n\) converging to a point \(x\) such that \(F_i(x)\in D\).
		Let \(\set{\lambda^{\nu}}\subset\R_{++}\) and \(\set{\beta^{\nu}}\subset(0,1)\) be sequences such that \(\lambda^{\nu}\searrow 0\) and \(\limsup_{\nu\in\N}\beta^{\nu}<1\), and let \(y_i^\nu=\tfrac{1}{\lambda^\nu}R_{\beta^\nu}(F_i(x^\nu))\).
		Then, any cluster point of \(\set{\frac{y_i^{\nu}}{\|\y^{\nu}\|}}\) or \(\set{y_i^{\nu}}\) belongs to the cone \(N_D(F_i(x))\), for such well-behaved \(F_i\).
		\end{corollary}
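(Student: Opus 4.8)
The plan is to recognize that the hypothesis ``\(F_i\) is a well-behaved CC'' supplies, uniformly in \(\nu\), precisely the sufficient condition isolated at the end of \cref{lemma_y_unbounded_bounded}, and then to bridge the small discrepancy between the normalization \(\|\y^\nu\|\) appearing in the statement and the componentwise normalization \(\|y_i^\nu\|\) used in that lemma. The only step that is not a direct invocation is this change of normalization, and I expect it to be the sole (mild) obstacle; the substantive work — that the problematic region \(\mathrm{T}_{\beta^\nu}\) is asymptotically avoided — is entirely subsumed by well-behavedness together with \cref{lemma_y_unbounded_bounded}.

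First I would observe that, since \(F_i\) is smooth hence continuous and \(x^\nu\to x\), the sequence \(z^\nu\coloneqq F_i(x^\nu)\) converges to \(z\coloneqq F_i(x)\), with \(z\in D\) by assumption. By \cref{definition_well_CC} there is \(\xi\in\R_{++}\) with \(F_i(x')\notin\B(0,\xi)\cap\R^2_{++}\) for every \(x'\in\R^n\); in particular \(z^\nu\notin\B(0,\xi)\cap\R^2_{++}\) for all \(\nu\in\N\), which is exactly the sufficient condition in \cref{lemma_y_unbounded_bounded} (with \(k=1\)). Applying that lemma with \(y^\nu=y_i^\nu\) and the above \(z^\nu,z\), together with the hypotheses \(\lambda^\nu\searrow0\) and \(\limsup_\nu\beta^\nu<1\), yields at once that every cluster point of \(\{y_i^\nu\}\) lies in \(N_D(z)=N_D(F_i(x))\), and likewise that every cluster point of \(\{y_i^\nu/\|y_i^\nu\|\}\) does, along any subsequence on which \(y_i^\nu\neq0\).

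It remains to pass from the componentwise normalization to the joint one. Let \(y\) be a cluster point of \(\{y_i^\nu/\|\y^\nu\|\}_{\nu\in K}\). If \(y=0\) there is nothing to prove, since \(0\in N_D(z)\). Otherwise \(\|y\|>0\), so \(\|y_i^\nu\|/\|\y^\nu\|\to\|y\|>0\) along \(K\); hence \(y_i^\nu\neq0\) for all large \(\nu\in K\) and
\[
  \frac{y_i^\nu}{\|y_i^\nu\|}
  =
  \frac{y_i^\nu/\|\y^\nu\|}{\|y_i^\nu\|/\|\y^\nu\|}
  \ \longrightarrow\ \frac{y}{\|y\|}
  \qquad\text{as }\nu\to\infty,\ \nu\in K .
\]
Thus \(y/\|y\|\) is a cluster point of \(\{y_i^\nu/\|y_i^\nu\|\}\) (restrict to those \(\nu\in K\) with \(y_i^\nu\neq0\) and reindex), which by the previous step lies in \(N_D(z)\); since \(N_D(z)\) is a cone, \(y=\|y\|\,(y/\|y\|)\in N_D(z)\), as claimed. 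This exhausts both cases in the corollary.
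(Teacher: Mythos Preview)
Your proof is correct and follows the same approach the paper intends: the corollary is stated without an explicit proof, as an immediate consequence of \cref{lemma_y_unbounded_bounded} via the well-behavedness condition, and your argument supplies exactly those details. Your careful passage from the componentwise normalization \(\|y_i^\nu\|\) of the lemma to the joint normalization \(\|\y^\nu\|\) of the corollary (using that \(N_D(z)\) is a cone) makes explicit a point the paper leaves tacit.
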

		
		\cref{lemma_y_unbounded_bounded} and \cref{lemma_well_behaved_CC} characterize the behavior of \(\set{\y^\nu}\).
		We now use these results to analyze the boundedness of the multipliers in the sequence of gradients
		\(\set{\nabla(\sum_{i=1}^{p}\env_{\lambda^\nu,\beta^\nu\lambda^\nu}\delta_D(F_i(x^\nu)))}\) based on \BCCQ, as stated in the following lemma.

		\begin{lemma}[Bounded multiplier]\label{lemma_bounded_multiplier}%
		Let \(\set{\lambda^{\nu}}\subset\R_{++}\) and \(\set{\beta^{\nu}}\subset(0,1)\) be sequences such that \(\lambda^{\nu}\searrow 0\) and \(\limsup_{\nu\in\N}\beta^{\nu}<1\).
		Consider a sequence \(\set{x^\nu}\subset\R^n\) such that \(\dist (-\nabla s_{\lambda^\nu,\beta^\nu}(x^\nu),N_C(x^\nu))\searrow0\), and suppose that \(\set{x^{\nu}}\to x\), where \(x\) satisfies \(F_i(x)\in D\) for all \(i\in\set{1,\ldots,p}\), and \BCCQ\@ holds at \(x\).
		Let \(\y^\nu=(y^\nu_{1},\ldots,y^\nu_{p})\) with \(y_i^\nu=\tfrac{1}{\lambda^\nu}R_{\beta^\nu}(F_i(x^\nu))\).
		Then, the sequence \(\set{\y^{\nu}}\) is bounded as \(\set{x^{\nu}}\to x\).
		\end{lemma}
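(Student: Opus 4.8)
The plan is to argue by contradiction, exploiting the normalized sequence to extract a nonzero multiplier vector that violates \BCCQ. Suppose, contrary to the claim, that $\set{\y^{\nu}}$ is unbounded; passing to a subsequence, $\|\y^{\nu}\|\to\infty$ and hence also $\|\y^{\nu}\|>0$ eventually. Set $\tilde\y^{\nu}\coloneqq \y^{\nu}/\|\y^{\nu}\|$, so that $\|\tilde\y^{\nu}\|=1$ for all large $\nu$; passing to a further subsequence, $\tilde\y^{\nu}\to\tilde\y$ with $\|\tilde\y\|=1$, in particular $\tilde\y\neq0$. Writing $\tilde\y=(\tilde y_1,\ldots,\tilde y_p)$, the goal is to show that $\tilde\y$ satisfies the left-hand side of the implication in \cref{definition_BCCQ} but not the right-hand side, which is the desired contradiction.

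\textbf{Step 1: membership of the normalized multipliers in the cones.} For each $i$, $\tilde y_i$ is a cluster point of $\set{\frac{1}{\|\y^{\nu}\|}\cdot\frac{1}{\lambda^\nu}R_{\beta^\nu}(F_i(x^\nu))}$, i.e.\ of $\set{y_i^\nu/\|\y^{\nu}\|}$, along our subsequence; note $F_i(x^\nu)\to F_i(x)\in D$ by continuity. For $i\in\mathcal{W}$ (well-behaved CCs), \cref{lemma_well_behaved_CC} gives $\tilde y_i\in N_D(F_i(x))$. For $i\notin\mathcal{W}$, \cref{lemma_y_unbounded_bounded} (applied to the normalized sequence $\set{y_i^\nu/\|y^\nu\|}$, using that cluster points of $\set{y_i^\nu/\|\y^\nu\|}$ are among those of $\set{y_i^\nu/\|y_i^\nu\|}$ when $y_i^\nu\neq0$, and handling the degenerate case $y_i^\nu=0$ trivially since then $\tilde y_i=0\in N_D^{\rm C}(F_i(x))$) yields $\tilde y_i\in N_D^{\rm C}(F_i(x))$. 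So the first two conditions in \cref{definition_BCCQ} hold for $\tilde\y$.

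\textbf{Step 2: the stationarity-type condition.} From $\dist(-\nabla s_{\lambda^\nu,\beta^\nu}(x^\nu),N_C(x^\nu))\searrow0$ and the expression $\nabla s_{\lambda^\nu,\beta^\nu}(x^\nu)=\nabla f(x^\nu)+\sum_{i=1}^p JF_i(x^\nu)^\top y_i^\nu$ (using $y_i^\nu=\tfrac{1}{\lambda^\nu}R_{\beta^\nu}(F_i(x^\nu))$), there exist $w^\nu\in N_C(x^\nu)$ with $\nabla f(x^\nu)+\sum_{i=1}^p JF_i(x^\nu)^\top y_i^\nu+w^\nu\to0$. Divide by $\|\y^\nu\|\to\infty$: the term $\nabla f(x^\nu)/\|\y^\nu\|\to0$ (since $\nabla f(x^\nu)\to\nabla f(x)$ is bounded), so $\sum_{i=1}^p JF_i(x^\nu)^\top \tilde y_i^\nu + w^\nu/\|\y^\nu\|\to0$. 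Since $N_C(x^\nu)$ is a cone, $w^\nu/\|\y^\nu\|\in N_C(x^\nu)$; using $JF_i(x^\nu)\to JF_i(x)$ and $\tilde y_i^\nu\to\tilde y_i$, the vector $-\sum_{i=1}^p JF_i(x^\nu)^\top\tilde y_i^\nu$ converges to $\sum_{i=1}^p JF_i(x)^\top\tilde y_i$, hence so does $w^\nu/\|\y^\nu\|$; by outer semicontinuity of the normal cone mapping $x'\mapsto N_C(x')$ at $x$ (convexity of $C$), the limit lies in $N_C(x)$, giving $\sum_{i=1}^p JF_i(x)^\top\tilde y_i + N_C(x)\ni0$. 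Thus the full hypothesis of the \BCCQ\ implication holds for $\tilde\y$.

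\textbf{Step 3: contradiction.} \BCCQ\ at $x$ now forces, for each $i$, $\tilde y_i^G=0$ unless $G_i$ is constant, and $\tilde y_i^H=0$ unless $H_i$ is constant. The plan is to show this forces $\tilde\y=0$, contradicting $\|\tilde\y\|=1$. The point is that whenever $G_i$ is constant, the first column of $JF_i$ vanishes identically, so the corresponding component $\tilde y_i^G$ contributes nothing to $\sum_i JF_i(x)^\top\tilde y_i$ and can be taken to be $0$ without loss of generality when defining the effective multiplier; likewise for constant $H_i$. More carefully: one should normalize by discarding the components of $\y^\nu$ attached to constant $G_i$ or $H_i$ at the outset (or observe they are annihilated), so that $\|\y^\nu\|$ measures only the ``active'' components; then \BCCQ\ gives that all surviving components of $\tilde\y$ are zero, contradicting $\|\tilde\y\|=1$. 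Hence $\set{\y^\nu}$ is bounded.

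\textbf{Main obstacle.} The delicate point is Step 3, namely handling the bookkeeping around constant $G_i$/$H_i$: the norm $\|\y^\nu\|$ in the statement is over all components, but \BCCQ\ only constrains the non-constant ones, so one must argue that the constant-component multipliers are irrelevant (they are multiplied by a zero Jacobian block and never appear in the stationarity relation, so they can be zeroed out or excluded from the normalization). A clean way is to replace $\y^\nu$ by $\hat\y^\nu$ obtained by zeroing every $y_i^G$ with $G_i$ constant and every $y_i^H$ with $H_i$ constant; then $\nabla s_{\lambda^\nu,\beta^\nu}(x^\nu)$ is unchanged, $\|\hat\y^\nu\|\le\|\y^\nu\|$, and it suffices to bound $\set{\hat\y^\nu}$ — but one must also check the reverse, that the discarded components are themselves bounded, which is immediate because for constant $G_i$ (say $G_i\equiv a\ge0$) the region-based formula for $R_{\beta^\nu}$ together with $\lambda^\nu\searrow0$ and $F_i(x^\nu)\to F_i(x)\in D$ pins down $y_i^G$ to a bounded (in fact convergent) expression, exactly as in the well-behaved-CC analysis of \cref{lemma_y_unbounded_bounded}. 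A secondary, more routine obstacle is making sure \cref{lemma_y_unbounded_bounded,lemma_well_behaved_CC} apply verbatim to the \emph{normalized} sequence $\set{y_i^\nu/\|\y^\nu\|}$ rather than $\set{y_i^\nu/\|y_i^\nu\|}$; this holds because $\set{y_i^\nu/\|\y^\nu\|}$ is a bounded rescaling of $\set{y_i^\nu}$ by the scalars $1/\|\y^\nu\|\to0$ times the fixed $1/\lambda^\nu$ structure, so the region-wise sign analysis in the proof of \cref{lemma_y_unbounded_bounded} carries over unchanged.
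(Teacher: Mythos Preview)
Your proof is correct and follows essentially the same approach as the paper: contradiction via normalization, followed by cone membership via \cref{lemma_y_unbounded_bounded,lemma_well_behaved_CC}, the divided stationarity inclusion, and a \BCCQ\ contradiction. The only organizational difference is in Step~3: rather than your split into ``zero out constant-component multipliers and separately verify they are bounded,'' the paper observes directly that whenever \(H_i\) (resp.\ \(G_i\)) is a nonnegative constant, the explicit formula~\eqref{R_beta} forces \((R_{\beta^\nu}(F_i(x^\nu)))_2=0\) (resp.\ first component) for all large \(\nu\), so the normalized limit \(\tilde y_i^H\) (resp.\ \(\tilde y_i^G\)) is already zero; combined with \BCCQ\ for the nonconstant components, this yields \(\tilde\y=0\) in one stroke, without the separate boundedness check you outline.
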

		
		\begin{proof}
		To arrive at a contradiction, suppose that \(\set{\y^{\nu}}\) is unbounded.
		Without loss of generality we can assume that \(\set{\frac{\y^{\nu}}{\|\y^{\nu}\|}}\to \y=(y_1,\dots,y_p)\neq0\).
		It follows from the assumptions that there exists \(r^\nu\to0 \) such that
		\[
			r^\nu
		\in
		\nabla f(x^\nu)
		+
		\tfrac{1}{\lambda^\nu}
		\sum_{i=1}^{p}
		JF_{i}{(x^\nu)}^\top
		R_{\beta^\nu}(F_{i}(x^{\nu}))
		+
		N_C(x^\nu).
		\]
		Dividing the inclusion by \(\|\y^{\nu}\|\) and using the fact that \(N_C(x^\nu)\) is a cone, we have
		\[
			\frac{r^\nu}{\|\y^{\nu}\|}
		\in
		\frac{\nabla f(x^\nu)}{\|\y^{\nu}\|}
		+
		\sum_{i=1}^{p}
		JF_{i}{(x^\nu)}^\top
		\frac{y_i^{\nu}}{\|\y^{\nu}\|}
		+
		N_C(x^\nu),
		\]
		which, upon passing to the limit, yields that
		\[
		0
		\in
		\sum_{i=1}^{p}
		JF_{i}{(x)}^\top
		y_i
		+
		N_C(x).
		\]
		We next consider the well-behaved and the remaining CCs separately.
		\begin{itemize}
		\item
			If \(i\notin\mathcal{W}\), it follows from \cref{lemma_y_unbounded_bounded} that \(y_i\in N_D^{\rm C}(F_i(x))\).
		\item
			If \(i\in\mathcal{W}\), then \cref{lemma_well_behaved_CC} implies that \(y_i\in N_D(F_i(x))\).
			Moreover, if \(H_i\geq0\) is a constant mapping for the well-behaved CC \(F_i=(G_i,H_i)\), then the expression~\eqref{R_beta} of \(R_\beta\) implies that \({(R_{\beta^\nu}(F_i(x^\nu)))}_2=0\) for all \(\nu\) large enough, implying that \(y_i^H=0\) in this case.
			The same reasoning shows that \(y_i^G=0\) if \(G_i\geq0\) is a constant mapping.
		\end{itemize}
		Altogether, this shows that \(\y\) satisfies the conditions in \BCCQ, and must thus be zero by the \BCCQ\@ assumption, yielding the contradiction.
		\end{proof}
		
		\begin{remark}\label{remark_BCCQ}
			Since \BCCQ\@ is weaker than \MFCQ\@ (see \eqref{relation_CQs}), the statement of \cref{lemma_bounded_multiplier} also holds under \MFCQ, which is more commonly used in the MPCC literature.
			On the other hand, it can be seen from~\cite{Hoheisel_2013,Kanzow_2015} that \MFCQ\@ is not always required; instead, MPCC-CRCQ~\cite[Definition 3(c)]{Kanzow_2015} or MPCC-CPLD~\cite[Definition 3(d)]{Kanzow_2015} may be employed.
			Establishing the implications between MPCC-CRCQ, MPCC-CPLD and \BCCQ\@ remains an interesting topic for future research.
		\end{remark}
		
		\begin{remark}
		For simplicity of exposition, unless explicitly mentioned otherwise we set \(\mathcal{W} =\emptyset\) in the remaining proofs;
		all subsequent results have straightforward counterparts when \(\mathcal{W} \neq\emptyset\) is considered,
		up to replacing the Clarke limiting cone with the limiting normal cone for the corresponding indices \(i\in\mathcal{W}\).
		\end{remark}
		
		We are now in a position to state whether the approximating problems \eqref{LL_subproblem}, together with their optimality conditions~\eqref{LL_KKT}, are well-justified surrogates of the original problem~\eqref{MPCC_simple} and its optimality condition in \cref{definition_M_C}.
		To this end, we leverage the concept of \emph{consistent approximation}.
		
		\begin{definition}[Consistent approximation {\cite[Definition 2.2]{Royset_2023}}]\label{definition_WSA}
		Given functions \(s,s^\nu:\R^{n}\rightarrow\overline{\R}\) and set-valued mappings \(S,S^\nu:\R^{n}\rightrightarrows\R^{m}\), \(\nu\in\N\),
		the pairs \(\set{(s^\nu,S^\nu)}\) form a \emph{consistent approximation} of \((s, S)\) if
		\[
		s^{\nu}\overset{\mathrm{e}}{\rightarrow}s
		\ \ \text{and}\ \
		S^{\nu}\overset{\mathrm{g}}{\rightarrow}S.
		\]
		If the graphical convergence is relaxed to
		\begin{equation}\label{weakly_approximating}
		\Limsup_{\nu\to\infty}\graph S^\nu\subseteq \graph S,
		\end{equation}
		then the pairs \(\set{(s^\nu,S^\nu)}\) form a \emph{weakly consistent approximation} of \((s, S)\).
		\end{definition}
		
		\cref{fact_preservation} implies that if the subproblem can be progressively solved to global optimality, then the global minimizer of the original problem can be retrieved.
		However, solving the subproblems to global optimality is typically intractable.
		As an alternative, the following theorem shows that using first-order conditions, the limit \(\dist (-\nabla s_{\lambda^\nu,\beta^\nu}(x^\nu),N_C(x^\nu))\searrow0\) with \(\lambda^{\nu}\searrow 0\) and \(\limsup\beta^{\nu}<1\), characterizes a C-stationary point of~\eqref{MPCC_simple}.

		\begin{theorem}[C-stationary point]\label{theorem_C_stationarity}
		Let \(\set{\lambda^{\nu}}\subset\R_{++}\) and \(\set{\beta^{\nu}}\subset(0,1)\) be sequences such that \(\lambda^{\nu}\searrow 0\) and \(\limsup_{\nu\in\N}\beta^{\nu}<1\).
		Consider a sequence \(\set{x^\nu}\subset\R^n\) such that \(\dist (-\nabla s_{\lambda^\nu,\beta^\nu}(x^\nu),N_C(x^\nu))\searrow0\), and suppose that \(\set{x^{\nu}}\to x\), where \(x\) satisfies \(F_i(x)\in D\) for all \(i\in\set{1,\ldots,p}\), and \BCCQ{} holds at \(x\).
		Let \(\y^\nu=(y^\nu_{1},\ldots,y^\nu_{p})\) with \(y_i^\nu=\tfrac{1}{\lambda^\nu}R_{\beta^\nu}(F_i(x^\nu))\).
		Then, we have \(\set{\y^{\nu}}\to\y\), and every limit pair \((x,\y)\) is a C-stationary primal-dual pair of~\eqref{MPCC_simple}, i.e.,
		\[
		0
		\in
		\nabla f(x)
		+
		\sum_{i=1}^{p}JF_{i}{(x)}^{\top}y_i
		+
		N_{C}(x)
		\quad
		\text{with}
		\quad
		y_i\in N^{\rm C}_D(F_{i}(x))\quad
		\forall i\in\set{1,\ldots,p}.
		\]
		\end{theorem}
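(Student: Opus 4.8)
The plan is to combine the boundedness of the multiplier sequence from \cref{lemma_bounded_multiplier} with the limiting-cone characterization from \cref{lemma_y_unbounded_bounded} to pass to the limit in the stationarity inclusion~\eqref{LL_KKT}. First I would invoke \cref{lemma_bounded_multiplier}: under the stated hypotheses ($\lambda^\nu\searrow0$, $\limsup\beta^\nu<1$, $\dist(-\nabla s_{\lambda^\nu,\beta^\nu}(x^\nu),N_C(x^\nu))\searrow0$, $x^\nu\to x$ feasible, and \BCCQ{} at $x$), the sequence $\set{\y^\nu}$ with $y_i^\nu=\tfrac1{\lambda^\nu}R_{\beta^\nu}(F_i(x^\nu))$ is bounded. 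Hence it has cluster points, and I pass to a subsequence along which $\y^\nu\to\y=(y_1,\dots,y_p)$.

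Next I would unpack the approximate-stationarity assumption: $\dist(-\nabla s_{\lambda^\nu,\beta^\nu}(x^\nu),N_C(x^\nu))\searrow0$ means there exists $r^\nu\to0$ with
\[
r^\nu\in\nabla f(x^\nu)+\tfrac{1}{\lambda^\nu}\sum_{i=1}^{p}JF_i{(x^\nu)}^\top R_{\beta^\nu}(F_i(x^\nu))+N_C(x^\nu)
=\nabla f(x^\nu)+\sum_{i=1}^{p}JF_i{(x^\nu)}^\top y_i^\nu+N_C(x^\nu).
\]
Using continuity of $\nabla f$ and of each $JF_i$ (smoothness of $f,G,H$), the convergences $x^\nu\to x$, $y_i^\nu\to y_i$, $r^\nu\to0$, together with the outer semicontinuity of the limiting normal cone $N_C$ (valid since $C$ is closed convex, so $N_C$ has closed graph), I take the limit to obtain $0\in\nabla f(x)+\sum_{i=1}^{p}JF_i{(x)}^\top y_i+N_C(x)$.

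It then remains to locate each $y_i$ in the correct cone. Since $F_i$ is continuous and $F_i(x)\in D$, we have $z_i^\nu\coloneqq F_i(x^\nu)\to F_i(x)\in D$; applying \cref{lemma_y_unbounded_bounded} to the sequence $y_i^\nu=\tfrac1{\lambda^\nu}R_{\beta^\nu}(z_i^\nu)$ yields $y_i\in N_D^{\rm C}(F_i(x))$ for every $i$ (recalling the standing convention $\mathcal{W}=\emptyset$; for $i\in\mathcal{W}$ one instead gets $y_i\in N_D(F_i(x))$ via \cref{lemma_well_behaved_CC}, and the constant-component cases force the corresponding $y_i^G$ or $y_i^H$ to vanish, as in the proof of \cref{lemma_bounded_multiplier}). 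Combining, $(x,\y)$ is a C-stationary primal-dual pair in the sense of \cref{definition_M_C}. Finally, since $x$ is feasible and every cluster point of the bounded sequence $\set{\y^\nu}$ satisfies the same C-stationarity system, a standard subsequence argument shows the conclusion holds for \emph{every} limit pair.

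I expect no single step to be a genuine obstacle here, as the heavy lifting is already done in \cref{lemma_bounded_multiplier} and \cref{lemma_y_unbounded_bounded}; the main point requiring care is the limit passage in the inclusion — one must correctly appeal to the closedness of $\graph N_C$ (equivalently, outer semicontinuity of $N_C$ for the closed convex set $C$) so that the limit of the normal-cone elements stays in $N_C(x)$, and one must ensure the decomposition $\tfrac1{\lambda^\nu}\sum_i JF_i(x^\nu)^\top R_{\beta^\nu}(F_i(x^\nu))=\sum_i JF_i(x^\nu)^\top y_i^\nu$ is used before dividing or limiting, since dividing by $\|\y^\nu\|$ (as in the unboundedness argument) is \emph{not} needed in this theorem — boundedness has already been secured.
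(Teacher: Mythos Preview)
Your proof is correct and uses the same two key ingredients as the paper --- \cref{lemma_bounded_multiplier} for boundedness of $\set{\y^\nu}$ and \cref{lemma_y_unbounded_bounded} for the cone membership $y_i\in N_D^{\rm C}(F_i(x))$ --- but you carry out the limit passage in a more elementary, self-contained way. The paper instead frames the limit step through the \emph{weakly consistent approximation} machinery of \cref{definition_WSA}: it verifies the epi-convergence $s_{\lambda^\nu,\beta^\nu}\overset{\mathrm{e}}{\to}s$ (already established in~\eqref{epi_converge_s}) and the graph inclusion $\Limsup_\nu\graph S_{\lambda^\nu,\beta^\nu}\subseteq\graph S$, and then invokes \cite[Proposition 2.3]{Royset_2023} to conclude that $\dist(-\nabla s_{\lambda^\nu,\beta^\nu}(x^\nu),N_C(x^\nu))\searrow0$ forces $0\in S(x)$. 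Your direct argument --- pick $r^\nu\to0$ in the inclusion, use continuity of $\nabla f$ and $JF_i$ together with the closed graph of $N_C$ (outer semicontinuity for closed convex $C$), and pass to the limit along a convergent subsequence of $\set{\y^\nu}$ --- achieves the same conclusion without the external reference. The paper's route situates the result within a general approximation framework and makes the role of epi-convergence explicit; yours is shorter and avoids the abstraction layer, at the cost of not highlighting the consistent-approximation viewpoint that motivates the whole homotopy construction.
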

		\begin{proof}
		\cref{lemma_bounded_multiplier} implies that \(\set{\y^\nu}\) is bounded under \BCCQ\@, thus we have \(\set{\y^{\nu}}\to \y\) as \(\set{x^{\nu}}\to x\).
		On the other hand, to demonstrate that the approximating problems, \(\minimize_{x\in C}s_{\lambda^\nu,\beta^\nu}(x)\), eventually become accurate relative to~\eqref{MPCC_simple} in terms of stationary points, under the light of \cref{definition_WSA} we need to prove that the surrogate function epi-converges to the original function, which is already established in~\eqref{epi_converge_s}.
		In addition, it is necessary to verify that the generalized equation corresponding to~\eqref{LL_KKT} at least satisfies the inclusion~\eqref{weakly_approximating}. To this end, let the limit
		\[
		\begin{aligned}
		v
		&\in
		\Limsup_{x^{\nu}\to x,\lambda^{\nu}\searrow0}
		\nabla
		s_{\lambda^{\nu},\beta^\nu}(x^\nu)
		+
		\partial\delta_C(x^\nu)
		\\
		&=
		\Limsup_{x^{\nu}\to x,\lambda^{\nu}\searrow0}
		\nabla f(x^\nu)
		+
		\tfrac{1}{\lambda^\nu}
		\sum_{i=1}^{p}
		JF_{i}{(x^\nu)}^\top
		R_{\beta^\nu}(F_{i}(x^{\nu}))
		+N_C(x^\nu)
		\end{aligned}
		\]
		be given under a sequence \(\set{\beta^{\nu}}\subset(0,1)\) such that \(\limsup_{\nu\in\N}\beta^{\nu}<1\).
		Then, the sequences \(\set{\lambda^{\nu}}\), \(\set{\beta^{\nu}}\), \(\set{x^\nu}\) and \(\set{\y^\nu}\) satisfy
		\[
		S_{\lambda^{\nu},\beta^\nu}(x^\nu):=
		\nabla f(x^\nu)
		+
		\sum_{i=1}^{p}
		JF_{i}{(x^\nu)}^\top y_i^{\nu}
		+
		N_C(x^\nu)
		\to v.
		\]
		\cref{lemma_y_unbounded_bounded} establishes that every limit point of \(\set{y_i^\nu}\) belongs to \(N_D^{\rm C}(F_i(x))\), i.e., \(y_i\in N^{\rm C}_D(F_{i}(x))\) for all \(i\in\set{1,\ldots,p}\). Thus, we have
		\[
		S(x):=
		\nabla f(x)
		+
		\sum_{i=1}^{p}
		JF_{i}{(x)}^\top N_D^{\rm C}(F_i(x))
		+
		N_C(x)
		\ni v,
		\]
		which implies that \(\Limsup_{\nu\to\infty}\graph S_{\lambda^{\nu},\beta^\nu}\subseteq \graph S\).
		Therefore, the pairs \(\set{(s_{\lambda^{\nu},\beta^\nu},S_{\lambda^{\nu},\beta^\nu})}\) are a weakly consistent approximation of \((s, S)\) based on \cref{definition_WSA}.
		Then, the condition \(\dist (-\nabla s_{\lambda^\nu,\beta^\nu}(x^\nu),N_C(x^\nu))\searrow0\) results in all limit points of \(\set{x^\nu}\) satisfying \(S(x)\ni0\) according to \cite[Proposition 2.3]{Royset_2023}.
		In this way, we conclude that \((x,\y)\) is a C-stationary primal-dual pair of~\eqref{MPCC_simple}.
		\end{proof}
		
		We now turn our attention to the M-stationarity, which hinges on the following important observation.
		
		\begin{lemma}\label{thm:Ncone}%
		Fix \(\beta\in(0,1)\) and let \(R_\beta\) be as in~\eqref{R_beta} and \(\mathrm{T}_\beta\subset\R^2\) as in \cref{fig:areas}. For every \(z\in\R^2\setminus\interior \mathrm{T}_\beta\) it holds that \(R_\beta(z)\in\hat{N}_D(\bar z)\), where \(\bar z=\Pi_D(z)\).
		\end{lemma}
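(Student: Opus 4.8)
The plan is to reduce the statement to the elementary fact that, for a closed set, the displacement from a point to its metric projection is a proximal — hence a \emph{regular} — normal. The key observation I would establish is that, on $\R^2\setminus\interior\mathrm{T}_\beta$, the mapping $R_\beta$ coincides with a fixed positive multiple of $z\mapsto z-\Pi_D(z)$; once this is in place, membership in $\hat N_D(\bar z)$ is immediate since $\hat N_D(\bar z)$ is a cone.

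First I would sort out the geometry of the regions in \cref{fig:areas}. The four regions cover $\R^2$, and the two boundary rays of $\mathrm{T}_\beta$, namely $\set{z_2=(1-\beta)z_1,\ z_1\ge0}$ and $\set{z_2=\tfrac{1}{1-\beta}z_1,\ z_1\ge0}$, satisfy the defining inequalities of $\mathrm{H}_\beta^{-}$ and $\mathrm{H}_\beta^{+}$ respectively, while the origin lies in $\mathrm{O}_{-}$; hence $\R^2\setminus\interior\mathrm{T}_\beta=\mathrm{O}_{-}\cup\mathrm{H}_\beta^{+}\cup\mathrm{H}_\beta^{-}$. Moreover the only set on which $\Pi_D$ is multivalued, $\set{z_1=z_2>0}$, is contained in $\interior\mathrm{T}_\beta$ (because $1-\beta<1<\tfrac{1}{1-\beta}$), so $\bar z=\Pi_D(z)$ is a singleton throughout $\R^2\setminus\interior\mathrm{T}_\beta$. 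Then, working region by region with \eqref{R_beta} and the explicit formula for $\Pi_D$, I would verify the identity $R_\beta(z)=\tfrac{1}{1-\beta}\bigl(z-\bar z\bigr)$: on $\mathrm{O}_{-}=\R_{-}^{2}$ one has $\bar z=(0,0)$ and $R_\beta(z)=\tfrac{1}{1-\beta}z$; on $\mathrm{H}_\beta^{+}$ one checks that $\bar z=(0,z_2)$ in each subcase — if $z_1\le0$ then $z\notin\R_{++}^2$ and $\Pi_{\R_+^2}(z)=(0,z_2)$, while if $z_1>0$ then $z_2\ge\tfrac{1}{1-\beta}z_1>z_1>0$, so $z\in\R_{++}^2$ with $z_2>z_1$, giving again $(0,z_2)$ — whence $z-\bar z=(z_1,0)=(1-\beta)R_\beta(z)$; the region $\mathrm{H}_\beta^{-}$ is handled symmetrically, with $\bar z=(z_1,0)$ and $z-\bar z=(0,z_2)$.

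To conclude, since $\bar z$ is a nearest point of $D$ to $z$, the vector $z-\bar z$ is a proximal normal to $D$ at $\bar z$ and therefore $z-\bar z\in\hat N_D(\bar z)$; as $\hat N_D(\bar z)$ is a cone and $\tfrac{1}{1-\beta}>0$, this gives $R_\beta(z)=\tfrac{1}{1-\beta}(z-\bar z)\in\hat N_D(\bar z)$. Alternatively, one can read this off the explicit expression for $\hat N_D$ recalled earlier: for $\bar z=(0,0)$ one has $z\in\R_-^2$ and $\tfrac{1}{1-\beta}z\in\R_-^2=\hat N_D((0,0))$; for $\bar z=(0,z_2)$ with $z_2>0$, $R_\beta(z)=\tfrac{1}{1-\beta}(z_1,0)\in\R\times\set{0}=\hat N_D(\bar z)$; and symmetrically for $\bar z=(z_1,0)$ with $z_1>0$. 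I do not anticipate a genuine obstacle here: the only real work is the bookkeeping in the identification step — keeping straight the case split inside the definition of $\Pi_D$ (whether $z\in\R_{++}^2$, and the ordering of $z_1,z_2$ there) together with the degenerate situations where $z$ sits on a coordinate axis (so that $\bar z=(0,0)$, or $\bar z=z\in D$ and $R_\beta(z)=0$).
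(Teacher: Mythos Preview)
Your proof is correct and follows the same region-by-region verification as the paper's own argument. The only difference is packaging: whereas the paper, in each of the three regions $\mathrm{O}_{-}$, $\mathrm{H}_\beta^{+}\setminus\mathrm{O}_{-}$, $\mathrm{H}_\beta^{-}\setminus\mathrm{O}_{-}$, records $\bar z$ and checks directly that $R_\beta(z)$ has the right sign pattern for $\hat N_D(\bar z)$, you first isolate the identity $R_\beta(z)=\tfrac{1}{1-\beta}(z-\Pi_D(z))$ on $\R^2\setminus\interior\mathrm{T}_\beta$ and then conclude via the standard inclusion of proximal normals in regular normals --- the underlying case check is identical.
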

		\begin{proof}
		Note that \(\bar z\) as in the statement is well defined.
		Indeed, \(\Pi_D\) is single-valued everywhere except on the diagonal within the strictly positive orthant, which is entirely contained in \(\interior \mathrm{T}_\beta\) and thus not considered here.
		With \(\mathrm{O}_{-}\) and \(\mathrm{H}_{\beta}^{\pm }\) as in \cref{fig:areas}, the three possible scenarios are:
		\begin{itemize}
		\item
			\(z\in\mathrm{O}_{-}\).
			In this case, \(\bar z=(0,0)\) and \(R_\beta(z)\leq (0,0)\), as needed.
		\item
			\(z\in\mathrm{H}_{\beta}^{+}\setminus \mathrm{O}_{-}\).
			In this case, \(\bar z=(0,z_2)\) and \((R_\beta(z))_2=0\), as needed.
		\item
			\(z\in\mathrm{H}_{\beta}^{-}\setminus \mathrm{O}_{-}\).
			In this case, \(\bar z=(z_1,0)\) and \((R_\beta(z))_1=0\), as needed.
		\qedhere
		\end{itemize}
		\end{proof}
		
		The region \(\interior \mathrm{T}_\beta\) turns out to be the problematic one, since all components of \(R_\beta\) are strictly positive there.
		Indeed, \(\frac{1}{\beta(2-\beta)}(z_1+z_2)-\frac{1}{\beta}z_1\leq0\) if and only if \(z_2\leq(1-\beta)z_1\), and similarly \(\frac{1}{\beta(2-\beta)}(z_1+z_2)-\frac{1}{\beta}z_2\leq0\) if and only if \(z_2\geq\frac{1}{1-\beta}z_1\).
		Therefore, in order to establish the M-stationarity at \(x\), we need to assume that there is no constraint \(F_i(x)\) that falls into the area \(\interior \mathrm{T}_{\beta}\);
		in other words, all constraints are well-behaved.
		In this case, the cone \(N_D^{\rm C}(F_i(x))\) can be replaced by \(N_D(F_i(x))\) in~\cref{definition_M_C},
		and \BCQ+, which is weaker than \BCCQ\@ (see~\eqref{relation_CQs}), suffices for guaranteeing the boundedness of \(\set{\y^\nu}\).

		\begin{theorem}[AM- and M-stationary point]\label{theorem_M}
		Let \(\set{\lambda^{\nu}}\subset\R_{++}\) and \(\set{\beta^{\nu}}\subset(0,1)\) be sequences such that \(\lambda^{\nu}\searrow 0\) and \(\limsup_{\nu\in\N}\beta^{\nu}<1\).
		Consider a sequence \(\set{x^\nu}\subset\R^n\) such that \(\dist (-\nabla s_{\lambda^\nu,\beta^\nu}(x^\nu),N_C(x^\nu))\searrow0\), and suppose that \(\set{x^{\nu}}\to x\), where \(x\) satisfies \(F_i(x)\in D\), and \(F_i\) is well-behaved for all \(i\in\set{1,\ldots,p}\).
		Let \(\y^\nu=(y^\nu_{1},\ldots,y^\nu_{p})\) with \(y_i^\nu=\tfrac{1}{\lambda^\nu}R_{\beta^\nu}(F_i(x^\nu))\).
		Then, the point \(x\) is AM-stationary.
		Furthermore, any such limit point \(x\) at which \BCQ+ is satisfied is actually M-stationary.
		\end{theorem}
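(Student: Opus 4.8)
The plan is to read off AM-stationarity directly from \cref{def:AKKT}, and then, under BCQ\textsuperscript{\textbf{+}}, to upgrade to M-stationarity by the now-familiar boundedness-by-contradiction argument. First I would unfold the hypothesis $\dist(-\nabla s_{\lambda^\nu,\beta^\nu}(x^\nu),N_C(x^\nu))\searrow0$: since $N_C(x^\nu)$ is closed, there is $r^\nu\to0$ with $r^\nu\in\nabla f(x^\nu)+\tfrac{1}{\lambda^\nu}\sum_{i=1}^{p}JF_i(x^\nu)^\top R_{\beta^\nu}(F_i(x^\nu))+N_C(x^\nu)=\nabla f(x^\nu)+\sum_{i=1}^{p}JF_i(x^\nu)^\top y_i^\nu+N_C(x^\nu)$, using the expression of $\nabla s_{\lambda,\beta}$ from~\eqref{function_s} and~\eqref{LL_KKT}. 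Next I would use well-behavedness to place $F_i(x^\nu)$ outside the bad region $\interior\mathrm{T}_{\beta^\nu}$ for all large $\nu$: since $F_i(x^\nu)\to F_i(x)\in D$, either $F_i(x)\neq0$ and the tail lies in $\mathrm{H}_{\beta^\nu}^{\pm}$ (using $\limsup\beta^\nu<1$, exactly as in the proof of \cref{lemma_y_unbounded_bounded}), or $F_i(x)=0$ and, since $F_i(x^\nu)\in\B(0,\xi)$ eventually while $F_i\notin\B(0,\xi)\cap\R^2_{++}$ by \cref{definition_well_CC}, we get $F_i(x^\nu)\notin\R^2_{++}\supseteq\interior\mathrm{T}_{\beta^\nu}$. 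Then \cref{thm:Ncone} yields $y_i^\nu=\tfrac{1}{\lambda^\nu}R_{\beta^\nu}(F_i(x^\nu))\in\hat N_D(\bar z_i^\nu)\subseteq N_D(\bar z_i^\nu)$ with $\bar z_i^\nu\coloneqq\Pi_D(F_i(x^\nu))$ (single-valued for large $\nu$), and $\bar z_i^\nu\to F_i(x)$ since $\|\bar z_i^\nu-F_i(x^\nu)\|=\dist(F_i(x^\nu),D)\to\dist(F_i(x),D)=0$. Plugging the tail of $(x^\nu,\y^\nu,\bar\z^\nu)$ into \cref{def:AKKT} shows $x$ is AM-stationary.

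For the M-stationarity claim I would assume BCQ\textsuperscript{\textbf{+}} at $x$ and show $\set{\y^\nu}$ is bounded. Arguing by contradiction, normalize so that $\y^\nu/\|\y^\nu\|\to\y$ with $\|\y\|=1$; dividing the displayed inclusion by $\|\y^\nu\|$, using that $N_C(x^\nu)$ is a cone, the outer semicontinuity of $N_C$, continuity of $JF_i$, $\|\y^\nu\|\to\infty$ and $r^\nu\to0$, passing to the limit gives $0\in\sum_{i=1}^{p}JF_i(x)^\top y_i+N_C(x)$. Since every $F_i$ is well-behaved, \cref{lemma_well_behaved_CC} gives $y_i\in N_D(F_i(x))$ for all $i$; moreover, whenever $G_i$ (resp.\ $H_i$) is a constant mapping, the explicit form~\eqref{R_beta} of $R_{\beta^\nu}$ forces the corresponding component of $R_{\beta^\nu}(F_i(x^\nu))$, hence of $y_i^\nu$, to vanish for all large $\nu$, so $y_i^G=0$ (resp.\ $y_i^H=0$) — exactly as in the proof of \cref{lemma_bounded_multiplier}. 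Thus $\y$ meets the premise of BCQ\textsuperscript{\textbf{+}}, which then yields ``$G_i$ constant or $y_i^G=0$'' and ``$H_i$ constant or $y_i^H=0$''; combined with the structural vanishing just noted, this forces $y_i^G=y_i^H=0$ for every $i$, i.e.\ $\y=0$, contradicting $\|\y\|=1$. (Alternatively one may shortcut this step by invoking \cref{theorem_C_stationarity}, noting that BCCQ collapses to BCQ\textsuperscript{\textbf{+}} when all CCs are well-behaved.)

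With $\set{\y^\nu}$ bounded, I would pass to a subsequence along which $\y^\nu\to\y$; taking limits in the displayed inclusion gives $0\in\nabla f(x)+\sum_{i=1}^{p}JF_i(x)^\top y_i+N_C(x)$, while \cref{lemma_well_behaved_CC} gives $y_i\in N_D(F_i(x))$ for all $i$, so $x$ is M-stationary. The main obstacle is the constant-$G_i$/$H_i$ bookkeeping in the boundedness step: one must carefully combine the BCQ\textsuperscript{\textbf{+}} conclusion with the structural property of $R_{\beta^\nu}$ to actually conclude $\y=0$, rather than merely a constrained $\y$; everything else is routine limiting, of the same flavor as in \cref{theorem_C_stationarity,lemma_bounded_multiplier}.
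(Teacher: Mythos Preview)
Your proposal is correct and follows essentially the same route as the paper: verify AM-stationarity by setting $z_i^\nu=\Pi_D(F_i(x^\nu))$ and invoking \cref{thm:Ncone} once $F_i(x^\nu)\notin\interior\mathrm{T}_{\beta^\nu}$, then upgrade to M-stationarity via a boundedness-by-contradiction argument modeled on \cref{lemma_bounded_multiplier} under \BCQ+. If anything, you are more explicit than the paper in the case $F_i(x)=0$, where you spell out exactly how well-behavedness (\cref{definition_well_CC}) forces $F_i(x^\nu)\notin\R^2_{++}\supseteq\interior\mathrm{T}_{\beta^\nu}$ for large $\nu$; the paper states this case more tersely.
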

		
		\begin{proof}
		By following the method in \cref{lemma_bounded_multiplier}, we know that the sequence \(\set{y_i^{\nu}}\) with \(y_i^\nu=\tfrac{1}{\lambda^\nu}R_{\beta^\nu}(F_i(x^\nu))\) is bounded if \BCQ+ is satisfied at \(x\).
		On the other hand, we have
		\begin{equation}\label{eq:APO}
			\dist
			\begin{pmatrix}
			-\nabla f(x^\nu)
			-
			\sum_{i=1}^{p}JF_{i}{(x^\nu)}^{\top}y_i^\nu
			,
			N_C(x^\nu)
			\end{pmatrix}
			\searrow 0.
		\end{equation}
		For any \(i\in\set{1,\ldots,p}\) we consider two cases:
		\begin{itemize}
			\item
			\(F_i(x)\neq(0,0)\).
			For every \(\nu\) let
			\[
				z_i^\nu
			\in
				\Pi_{D}(F_i(x^\nu)).
			\]
			Since \(\limsup\beta^\nu<1\) one has that \(z_i^\nu\notin \interior \mathrm{T}_{\beta^\nu}\) for all \(\nu\) large enough, and \cref{thm:Ncone} implies that
			\begin{equation}\label{eq:yNcone}
				y_i^\nu\in N_D(z_i^\nu)
			\end{equation}
			for all such \(\nu\).
			Furthermore, since \(F_i(x)\in D\), clearly one has
			\begin{equation}\label{eq:APF}
				F_i(x^\nu)-z_i^\nu\to 0.
			\end{equation}
			All properties~\eqref{eq:APO},~\eqref{eq:yNcone},~\eqref{eq:APF} and the feasibility of \(x\) combined show that all the conditions of \cref{def:AKKT} are satisfied, hence that \(x\) is AM-stationary.
		
			\item
			\(F_i(x)=(0,0)\).
			If \(F_i(x^\nu)\notin\interior \mathrm{T}_{\beta^\nu}\) holds for all \(\nu\) large enough, then the arguments appealing to \cref{thm:Ncone} as in the previous case can be replicated and the same conclusion follows.
		\end{itemize}
		In case \(\set{\y^\nu}\) is bounded, this being ensured by \BCQ+ at \(x\) as shown above, M-stationarity follows.
		\end{proof}

		\subsection{Special Instance Assuming Feasible Initialization}
			Without further assumptions, limit points may be infeasible, even in convex composite optimization, such as the Gauss-Newton method for solving nonlinear systems of equations~\cite{Burke_2013}.
			Therefore, it is reasonable to introduce a further restriction on how to generate \(\set{x^\nu}\).
			Specifically, if it is possible to select a feasible starting point, then one can proceed along the lines of~\cref{algorithm1}.

			\begin{algorithm}[h]
			\caption{LL-based homotopy approach with feasible initialization.}\label{algorithm1}
			\begin{algorithmic}[1]
			\itemsep=3pt%
			\renewcommand{\algorithmicindent}{0.3cm}%

			\item[\textbf{Initialization:}]

			Let \(\nu=1\) and \(x^0\in C\) be such that \(F_i(x^0)\in D\) for all \(i\in\set{1,\ldots,p}\).

			\item[\textbf{Repeat:}]

			\State Choose \(\epsilon^{\nu},\lambda^\nu>0\) and \(\beta^\nu\in(0,1)\), and set \(s_{\lambda^{\nu},\beta^{\nu}}:=f+\sum_{i=1}^{p}\env_{\lambda^\nu,\beta^\nu \lambda^\nu}\delta_{D}\circ F_{i}\).

			\State Find an approximate solution \(x^\nu\) to the problem \(\minimize_{x\in C}s_{\lambda^{\nu},\beta^{\nu}}(x)\) such that

			\begin{itemize}
			\item \(s_{\lambda^{\nu},\beta^{\nu}}(x^\nu)\leq\min\set{s_{\lambda^{\nu},\beta^{\nu}}(x^{\nu-1}),f(x^0)}\) and
			\item \(\dist (-\nabla s_{\lambda^{\nu},\beta^{\nu}}(x^\nu),N_C(x^\nu))\leq\epsilon^{\nu}\).
			\end{itemize}

			\State Let \(\nu\leftarrow\nu+1\).

			\end{algorithmic}
			\end{algorithm}

			\begin{theorem}\label{theorem_feasibility}
			Consider the iterates generated by \cref{algorithm1} with \(\epsilon^{\nu}\searrow 0\), \(\lambda^{\nu}\searrow 0\) and \(\sup_{\nu\in\N}\beta^{\nu}<1\).
			If \(f\) is bounded from below, then the following hold:
			\begin{enumerate}
			\item
				Any cluster point of \(\set{x^\nu}\) is feasible.
				\item
				Any cluster point of \(\set{x^\nu}\) at which \BCCQ\@ is satisfied is C-stationary.
			\item
				Any cluster point of \(\set{x^\nu}\) is AM-stationary as long as \(F_i\) is well-behaved for all \(i\in\set{1,\ldots,p}\).
				Any such cluster point at which \BCQ+\@ is satisfied is actually M-stationary.
			\end{enumerate}
			\end{theorem}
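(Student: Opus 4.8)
The plan is to prove the three assertions in order. Assertion~(i), feasibility of cluster points, carries the only genuinely new content; assertions~(ii) and~(iii) will then follow by applying the abstract convergence results \cref{theorem_C_stationarity} and \cref{theorem_M} along the subsequences realizing the cluster points.

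For~(i), I would fix a cluster point $x$ of $\set{x^\nu}$, say $x^\nu\to_K x$ for some $K\in\mathcal{N}_{\infty}^{\sharp}$. Since $x^0$ is feasible we have $F_i(x^0)\in D$, hence $r_{\beta^\nu}(F_i(x^0))=0$ for all $i$ (recall $\arg\min r_{\beta^\nu}=D$ with minimum value $r_{\beta^\nu}^\star=0$), so that $s_{\lambda^\nu,\beta^\nu}(x^0)=f(x^0)$; in particular the acceptance test of \cref{algorithm1} is consistent and enforces $s_{\lambda^\nu,\beta^\nu}(x^\nu)\le f(x^0)$ for every $\nu$. Writing $\underline{f}\coloneqq\inf f>-\infty$ and combining this ceiling with the lower bound $r_{\beta}(z)\ge\tfrac12\dist(z,D)^2$ from~\eqref{r_beta_nonnegative} and the definition~\eqref{function_s} of $s_{\lambda^\nu,\beta^\nu}$, I would obtain
\[
  \underline{f}+\frac1{2\lambda^\nu}\sum_{i=1}^{p}\dist(F_i(x^\nu),D)^2
  \;\le\;
  s_{\lambda^\nu,\beta^\nu}(x^\nu)
  \;\le\;
  f(x^0),
\]
whence $\sum_{i=1}^p\dist(F_i(x^\nu),D)^2\le 2\lambda^\nu\big(f(x^0)-\underline{f}\big)\to0$ as $\lambda^\nu\searrow0$. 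Passing to the limit along $K$ and using continuity of each $F_i$ and of $\dist(\cdot,D)$ together with closedness of $D$ (and of $C$, which contains every $x^\nu$) then gives $F_i(x)\in D$ for all $i$ and $x\in C$, i.e., $x$ is feasible.

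For~(ii) and~(iii), I would observe that the second acceptance requirement in \cref{algorithm1} gives $\dist(-\nabla s_{\lambda^\nu,\beta^\nu}(x^\nu),N_C(x^\nu))\le\epsilon^\nu\to0$, so that after restricting to $\nu\in K$ the sequence satisfies \emph{every} hypothesis of \cref{theorem_C_stationarity}: $\lambda^\nu\searrow0$, $\limsup_{\nu\in K}\beta^\nu\le\limsup_{\nu\in\N}\beta^\nu<1$, the residual condition just recorded, $x^\nu\to x$ with $F_i(x)\in D$ for all $i$ by part~(i), and \BCCQ at $x$ when the latter is assumed. Invoking \cref{theorem_C_stationarity} yields $\set{\y^\nu}\to\y$ and C-stationarity of $(x,\y)$, which is~(ii). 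If in addition every $F_i$ is well-behaved, the same restricted sequence meets all hypotheses of \cref{theorem_M}, giving AM-stationarity of $x$ and M-stationarity whenever \BCQ+ additionally holds at $x$; this is~(iii).

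I expect the only real obstacle to be~(i), whose crux is the uniform-in-$\beta$ lower bound~\eqref{r_beta_nonnegative}: this is exactly what converts the \emph{fixed} upper bound $f(x^0)$ on the surrogate values into an infeasibility estimate of order $\lambda^\nu$. Beyond that, the main care is bookkeeping — passing from the full-sequence formulations of \cref{theorem_C_stationarity,theorem_M} to the convergent subsequence defining a given cluster point, and checking that the parameter and residual hypotheses are inherited by it (which holds, since $\lambda^\nu\searrow0$, $\epsilon^\nu\to0$ and $\limsup\beta^\nu<1$ are assumed for the whole sequence).
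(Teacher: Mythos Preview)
Your proposal is correct and follows essentially the same approach as the paper: both arguments combine the acceptance condition $s_{\lambda^\nu,\beta^\nu}(x^\nu)\le f(x^0)$ with the lower bound~\eqref{r_beta_nonnegative} to force $\sum_i\dist(F_i(x^\nu),D)^2=\mathcal{O}(\lambda^\nu)$, and then defer (ii)--(iii) to \cref{theorem_C_stationarity,theorem_M}. The only cosmetic difference is that the paper phrases~(i) as a contradiction (if $F_i(x)\notin D$ then $f(x^\nu)\to_J-\infty$), whereas you argue directly using $\underline{f}$; the content is identical.
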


			\begin{proof}
			According to the sandwich property \cref{fact_sandwich}, we have
			\[
				f(x^0)
				\geq
				s_{\lambda^{\nu},\beta^{\nu}}(x^\nu)
			\geq
				f(x^\nu)
				+
				\tfrac{1}{2\lambda^\nu}
				\sum_{i=1}^{p}
				\dist {(F_i(x^\nu),D)}^2.
			\]
			Suppose that \(x\) is a cluster point such that \(F_i(x)\notin D\) for some \(i\in\set{1,\ldots,p}\).
			Let \(J\in\mathcal{N}_{\infty}^{\sharp}\) be such that \(\set{x^\nu}\to_J x\), then \(\tfrac{1}{2\lambda^\nu}\dist {(F_i(x^\nu),D)}^2\to_J+\infty\) for such \(i\), which gives a contradiction \(f(x^\nu)\to_J-\infty\)
			(otherwise, the function \(f\) would be unbounded below).
			Thus, we have \(
			F_i(x)\in D\) for all \(i\in\set{1,\ldots,p}\).
			Claims (ii) and (iii) come from \cref{theorem_C_stationarity,theorem_M}.
			\end{proof}

	%% ██ 5. Practical Algorithm █████████████████████████████████████████████████████████████████████████████████████████
	\section{Practical Algorithm and Complexity Analysis}\label{sec_finite}
		The previous section analyzes the asymptotic behavior when \(\lambda^\nu\searrow0\) and \(\epsilon^{\nu}\searrow 0\).
		However, a practical algorithm always requires termination in a finite number of iterations to meet a user-defined accuracy tolerance.
		In this section, we present a practical variant of \cref{algorithm1} and analyze its worst-case complexity to reach an approximate stationary point.
		
		\subsection{Assumptions and Auxiliary Lemmas}
			At the beginning of this paper, we assume that the solution set of~\eqref{MPCC_simple} is nonempty and all functions are smooth.
			In this section, to establish the worst-case complexity, we additionally assume the compactness of the level set of the objective function \(f\) over the set \(C\), as stated below.

			\begin{assumption}\label{assumption_level_set}
			For \(\alpha\in\R\), the level set \(\lev_{\leq\alpha}[f;C]:=\set{x\in C}[f(x)\leq\alpha]\) is compact (possibly empty).
			\end{assumption}

			\Cref{assumption_level_set} implies that there exists $\underline{f}\in\R$ such that $f(x)\geq\underline{f}$ for all $x\in C$.
			In addition, we assume that the constraint function \(F_{i}\) is Lipschitz continuous.
			Together with the previously stated smoothness assumption, these conditions are formalized as follows:

			\begin{assumption}\label{assumption_Lipschitz}
			The constraint function $F_{i}:\R^{n}\to\R^{2}$ is $K_{i}$-Lipschitz continuous and its Jacobian \(JF_i:\R^{n}\to\R^{2\times n}\) is $L_{i}$-Lipschitz continuous for all \(i\in\set{1,\ldots,p}\).
			Furthermore, the objective gradient $\nabla f:\R^{n}\to\R^n$ is $L_{f}$-Lipschitz continuous.
			\end{assumption}

			As established in the previous section, the original problem~\eqref{MPCC_simple} admits a smoothed approximation of the form: \(\minimize_{x\in C}s_{\lambda,\beta}(x)\).
			The following lemma establishes the property of $s_{\lambda,\beta}$.

			\begin{lemma}\label{lemma_level_set}
			Let~\cref{assumption_level_set} hold and \(s_{\lambda,\beta}\) be as in~\eqref{function_s}, choose \(\lambda>0\), \(\beta\in(0,1)\) and \(\tilde{x}\in C\),
			then the level set
			\[
			\lev_{\leq s_{\lambda,\beta}(\tilde{x})}[s_{\lambda,\beta};C]
			=
			\set{x\in C}[s_{\lambda,\beta}(x)\leq s_{\lambda,\beta}(\tilde{x})]
			\]
			is compact.
			\end{lemma}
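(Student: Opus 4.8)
The plan is to show that the level set $\lev_{\leq s_{\lambda,\beta}(\tilde x)}[s_{\lambda,\beta};C]$ is closed and bounded. Closedness is immediate: $s_{\lambda,\beta}$ is continuous (indeed smooth, since $f$ is smooth and each $r_\beta\circ F_i$ is continuous by~\eqref{r_beta} and the smoothness of $F_i$), and $C$ is closed, so the sublevel set is a closed subset of $\R^n$. The substance is boundedness, and the key is the nonnegativity bound~\eqref{r_beta_nonnegative}, which gives $r_\beta(F_i(x))\geq 0$ for every $x$ and every $i$. Hence for any $x\in C$ with $s_{\lambda,\beta}(x)\leq s_{\lambda,\beta}(\tilde x)$ we have
\[
  f(x)
  \leq
  f(x)+\tfrac{1}{\lambda}\sum_{i=1}^{p}r_\beta(F_i(x))
  =
  s_{\lambda,\beta}(x)
  \leq
  s_{\lambda,\beta}(\tilde x).
\]
Therefore $x\in\lev_{\leq s_{\lambda,\beta}(\tilde x)}[f;C]$, i.e.
\[
  \lev_{\leq s_{\lambda,\beta}(\tilde x)}[s_{\lambda,\beta};C]
  \subseteq
  \lev_{\leq s_{\lambda,\beta}(\tilde x)}[f;C].
\]

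By \cref{assumption_level_set} the right-hand set is compact (with $\alpha=s_{\lambda,\beta}(\tilde x)$), so in particular it is bounded; being a subset of a bounded set, $\lev_{\leq s_{\lambda,\beta}(\tilde x)}[s_{\lambda,\beta};C]$ is bounded as well. Combined with closedness, this yields compactness. One minor point worth stating explicitly: the set is nonempty because $\tilde x$ itself belongs to it, but \cref{assumption_level_set} already allows the level sets of $f$ to be empty, so no separate argument is needed and the conclusion holds vacuously if desired; in our case nonemptiness is clear anyway.

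I do not expect any real obstacle here — the only thing to be careful about is invoking the right inequality, namely~\eqref{r_beta_nonnegative} rather than trying to argue coercivity of $r_\beta$ directly (which fails, since $r_\beta$ vanishes on the unbounded set $D$). The whole argument is the set inclusion above plus \cref{assumption_level_set}; the Lipschitz hypotheses of \cref{assumption_Lipschitz} are not needed for this particular lemma and should not be invoked.
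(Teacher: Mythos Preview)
Your proposal is correct and follows essentially the same route as the paper: use the nonnegativity bound~\eqref{r_beta_nonnegative} to obtain the inclusion $\lev_{\leq s_{\lambda,\beta}(\tilde x)}[s_{\lambda,\beta};C]\subseteq\lev_{\leq s_{\lambda,\beta}(\tilde x)}[f;C]$, invoke \cref{assumption_level_set} for boundedness, and continuity of $f$, $F_i$, $r_\beta$ (plus closedness of $C$) for closedness. The only cosmetic difference is that you separate closedness and boundedness explicitly, whereas the paper bundles them into one sentence.
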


			\begin{proof}
			Let \(x\in\lev_{\leq s_{\lambda,\beta}(\tilde{x})}[s_{\lambda,\beta};C]\).
			Using the fact that \(r_{\beta}\) is nonnegative, as established in~\eqref{r_beta_nonnegative}, we obtain
			\[
			f(x)
			\leq
			f(x)
			+
			\tfrac{1}{\lambda}
			\sum_{i=1}^{p}
			r_{\beta}(F_i(x))
			=
			s_{\lambda,\beta}(x)
			\leq
			s_{\lambda,\beta}(\tilde{x}),
			\]
			implying \(x\in\lev_{\leq s_{\lambda,\beta}(\tilde{x})}[f;C]\), and then \(\lev_{\leq s_{\lambda,\beta}(\tilde{x})}[s_{\lambda,\beta};C]\subseteq\lev_{\leq s_{\lambda,\beta}(\tilde{x})}[f;C]\).
			According to~\cref{assumption_level_set} and the continuity of $f$, $F_{i}$ and $r_{\beta}$, we know that \(\lev_{\leq s_{\lambda,\beta}(\tilde{x})}[s_{\lambda,\beta};C]\) is compact.
			\end{proof}

			The following lemma establishes an important stopping criterion for our algorithm, ensuring that the violation of CCs remains bounded.

			\begin{lemma}\label{lemma_stopping_criterion}
			Let \(r_\beta\) be as in~\eqref{r_beta} and \(F_i=(G_i,H_i)\).
			Suppose \(\beta\in(0,1)\) and \(\epsilon>0\).
			If \(x\) is such that
			\begin{equation}\label{stopping_criterion}
			\sum_{i=1}^{p}
			r_\beta(F_i(x))
			\leq
			\frac{\epsilon^{2}}{2},
			\end{equation}
			then the complementarity constraints are satisfied approximately by
			\begin{equation}\label{stopping_criterion_norm}
			\begin{Vmatrix}
				\min\set{
				G(x),
				H(x)
				}
			\end{Vmatrix}
			\leq
			\epsilon,
			\end{equation}
			where the minimum is taken componentwise.
			\end{lemma}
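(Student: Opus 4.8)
The plan is to reduce the statement to an elementary pointwise inequality between the distance to the complementarity set $D$ and the componentwise minimum, and then sum over the $p$ constraint pairs. The starting point is the lower bound $r_\beta(z)\ge\tfrac12\dist(z,D)^2$ already recorded in~\eqref{r_beta_nonnegative}. Combining it with the hypothesis~\eqref{stopping_criterion} immediately gives
\[
  \sum_{i=1}^{p}\dist(F_i(x),D)^2
  \;\le\;
  2\sum_{i=1}^{p}r_\beta(F_i(x))
  \;\le\;
  \epsilon^{2}.
\]
Since $\|\min\{G(x),H(x)\}\|^{2}=\sum_{i=1}^{p}\min\{G_i(x),H_i(x)\}^{2}$ (the minimum being taken componentwise), it then suffices to prove the claim that $\min\{z_1,z_2\}^{2}\le\dist(z,D)^{2}$ for every $z=(z_1,z_2)\in\R^{2}$; feeding this into the displayed bound and taking square roots yields~\eqref{stopping_criterion_norm}.

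To establish the pointwise claim I would use the closed form of the squared distance already extracted in~\eqref{moreau_compl}, namely $\dist(z,D)^{2}=\|z\|^{2}-\max\{[z_1]_+,[z_2]_+\}^{2}$, and split on the sign of $m\coloneqq\min\{z_1,z_2\}$. If $m\ge0$, then $z\in\R^{2}_{+}$, so $\max\{[z_1]_+,[z_2]_+\}^{2}=\max\{z_1,z_2\}^{2}$ and $\dist(z,D)^{2}=z_1^{2}+z_2^{2}-\max\{z_1,z_2\}^{2}=\min\{z_1,z_2\}^{2}=m^{2}$, with equality. If $m<0$, assume without loss of generality $z_1\le z_2$ with $z_1<0$ (the other case is symmetric); then $[z_1]_+=0$, hence $\max\{[z_1]_+,[z_2]_+\}=[z_2]_+$ and $\dist(z,D)^{2}=z_1^{2}+z_2^{2}-[z_2]_+^{2}\ge z_1^{2}=m^{2}$. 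In either case $m^{2}\le\dist(z,D)^{2}$, which is the claim.

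I do not expect a genuine obstacle here: the argument is just the lower envelope bound~\eqref{r_beta_nonnegative} followed by the sign bookkeeping above and the $\ell_2$ identity for the componentwise minimum. The only points worth flagging are that the parameter $\beta\in(0,1)$ enters only through~\eqref{r_beta_nonnegative} (so the bound is uniform in $\beta$), and that $\epsilon>0$ is used merely to take the square root at the last step.
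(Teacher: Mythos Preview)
Your argument is correct. It is, however, a cleaner route than the one the paper takes. The paper splits the index set according to the three regions $\mathrm{O}_{-}$, $\mathrm{H}_\beta^{\pm}$, $\mathrm{T}_\beta$ of \cref{fig:areas} and bounds each piece using the explicit piecewise formula for $r_\beta$: in $\mathrm{O}_{-}$ and $\mathrm{H}_\beta^{\pm}$ it exploits the factor $\tfrac{1}{1-\beta}>1$ in~\eqref{r_beta}, and only in $\mathrm{T}_\beta$ does it fall back on the sandwich bound via~\eqref{moreau_compl}. You instead apply the universal lower bound~\eqref{r_beta_nonnegative} once and reduce everything to the elementary pointwise inequality $\min\{z_1,z_2\}^2\le\dist(z,D)^2$, handled by a two-case sign check. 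Your version bypasses the region bookkeeping and the explicit form of $r_\beta$ entirely, making the role of $\beta$ (namely, that it enters only through~\eqref{r_beta_nonnegative}) more transparent; the paper's version, by contrast, keeps the piecewise structure of $r_\beta$ visible throughout.
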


			\begin{proof}
			For notational conciseness, we denote \(\sum_{\mathrm{H}_{\beta}^{\pm}}\) as the sum over the indices \(i=1,\dots,p\) for which \(F_i(x)\in\mathrm{H}_{\beta}^{\pm}\), and similarly for the other two areas of \cref{fig:areas}.
			Building on this notation, we can reformulate~\eqref{stopping_criterion} equivalently as
			\begin{align*}
				\sum_{\interior\mathrm{O}_{-}}\tfrac{1}{2(1-\beta)}
				\begin{pmatrix}
					G_i{(x)}^{2}+H_i{(x)}^{2}
				\end{pmatrix}
			&
				{}+
				\sum_{\mathrm{H}_{\beta}^{\pm}}\tfrac{1}{2(1-\beta)}
				\min\set{G_i(x),H_i(x)}^{2}
			\\
			&
				{}+
				\sum_{\interior\mathrm{T}_{\beta}}
				r_\beta(F_i(x))
				\leq\frac{\epsilon^{2}}{2}.
			\end{align*}
			When \(F_i(x)\in\interior\mathrm{O}_-\), the value of \(G_{i}{(x)}^{2}+H_{i}{(x)}^{2}\) can be bounded from below by \(\min\set{G_{i}{(x)},H_{i}{(x)}}^2\).
			Furthermore, given that \(\nicefrac{1}{(1-\beta)}>1\) for \(\beta\in(0,1)\), it follows that
			\[
			\sum_{\interior\mathrm{O}_{-}\cup\mathrm{H}_{\beta}^{\pm}}
			\tfrac{1}{2}
			\min\set
			{
				G_{i}{(x)},
				H_{i}{(x)}
			}^2
			+
			\sum_{\interior\mathrm{T}_{\beta}}
			r_\beta(F_i(x))
			\leq\frac{\epsilon^{2}}{2}.
			\]
			As for the case where \(F_i(x)\in\interior\mathrm{T}_{\beta}\), combining \cref{fact_sandwich} and the formulation of Moreau envelope~\eqref{moreau_compl}, we have
			\[
			\sum_{\interior\mathrm{T}_{\beta}}
			\tfrac{1}{2}
			\min
			\set{
				G_{i}{(x)},
				H_{i}{(x)}
			}^2
			=
			\sum_{\interior\mathrm{T}_{\beta}}
			\lambda
			\env_\lambda\delta_{D}(F_i(x))
			\leq
			\sum_{\interior\mathrm{T}_{\beta}}
			r_\beta(F_i(x)),
			\]
			which leads to
			\[
			\sum_{\interior\mathrm{O}_{-}\cup\mathrm{H}_{\beta}^{\pm}\cup\interior\mathrm{T}_{\beta}}
			\tfrac{1}{2}
			\min
			\set
			{
				G_{i}{(x)},
				H_{i}{(x)}
			}^2
			=
			\sum_{i=1}^{p}
			\tfrac{1}{2}
			\min
			\set
			{
				G_{i}{(x)},
				H_{i}{(x)}
			}^2
			\leq\frac{\epsilon^{2}}{2}.
			\]
			In this way, the claimed result is obtained.
			\end{proof}

			The following lemma establishes an upper bound of $\nicefrac{1}{\lambda}$, which is determined by the violation of CCs.

			\begin{lemma}\label{lemma_mu_lowerbound}
			Let \(\lambda>0\), \(\beta\in(0,1)\) and \(\epsilon>0\),
			and let \(r_\beta,s_{\lambda,\beta}\) be as in~\eqref{r_beta} and~\eqref{function_s}.
			Choose \(x,\tilde{x}\in C\) such that
			\[
			s_{\lambda,\beta}(x)
			\leq
			s_{\lambda,\beta}(\tilde{x})
			\quad
			\text{and}
			\quad
			\sum_{i=1}^{p}
			r_{\beta}(F_{i}(\tilde{x}))
			\leq
			\frac{\epsilon^{2}}{4}.
			\]
			If the condition (\ref{stopping_criterion}) is not satisfied by \(x\), i.e., \(
			\sum_{i=1}^{p}
			r_{\beta}(F_{i}(x))
			>
			\nicefrac{\epsilon^{2}}{2},
			\)
			then the parameter $\nicefrac{1}{\lambda}$ is  upper bounded by
			\[
			\frac{1}{\lambda}
			<
			\frac{4(f(\tilde{x})-f(x))}{\epsilon^{2}}.
			\]
			\end{lemma}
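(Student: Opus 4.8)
The plan is to expand the defining inequality $s_{\lambda,\beta}(x)\le s_{\lambda,\beta}(\tilde x)$ using the explicit form $s_{\lambda,\beta}(\cdot)=f(\cdot)+\tfrac1\lambda\sum_{i=1}^p r_\beta(F_i(\cdot))$ from~\eqref{function_s}, isolate the term $\tfrac1\lambda$, and then use the two-sided bound on $\sum_i r_\beta(F_i(\cdot))$ at $x$ and $\tilde x$ to conclude. There is no real obstacle here; the statement is an algebraic rearrangement, and the only point worth stating carefully is that the gap $\sum_i r_\beta(F_i(x))-\sum_i r_\beta(F_i(\tilde x))$ is \emph{strictly positive}, which is what lets us divide by it and keep a strict inequality.

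First I would write out $s_{\lambda,\beta}(x)\le s_{\lambda,\beta}(\tilde x)$ as
\[
  f(x)+\tfrac1\lambda\sum_{i=1}^p r_\beta(F_i(x))
  \;\le\;
  f(\tilde x)+\tfrac1\lambda\sum_{i=1}^p r_\beta(F_i(\tilde x)),
\]
and rearrange to
\[
  \tfrac1\lambda\left(\sum_{i=1}^p r_\beta(F_i(x))-\sum_{i=1}^p r_\beta(F_i(\tilde x))\right)
  \;\le\;
  f(\tilde x)-f(x).
\]
Next I would invoke the hypotheses: the negation of the stopping criterion~\eqref{stopping_criterion} gives $\sum_{i=1}^p r_\beta(F_i(x))>\nicefrac{\epsilon^2}{2}$, while the assumption on $\tilde x$ gives $\sum_{i=1}^p r_\beta(F_i(\tilde x))\le\nicefrac{\epsilon^2}{4}$, so the parenthesized quantity is bounded below by $\nicefrac{\epsilon^2}{2}-\nicefrac{\epsilon^2}{4}=\nicefrac{\epsilon^2}{4}>0$. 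Since $\tfrac1\lambda>0$, this yields $\tfrac1\lambda\cdot\nicefrac{\epsilon^2}{4}<f(\tilde x)-f(x)$, and dividing by $\nicefrac{\epsilon^2}{4}$ gives the claimed bound $\tfrac1\lambda<\tfrac{4(f(\tilde x)-f(x))}{\epsilon^2}$.

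As a sanity check I would note that the nonnegativity of $r_\beta$ from~\eqref{r_beta_nonnegative} is implicitly needed only to see that the right-hand side $f(\tilde x)-f(x)$ is automatically positive (so the bound is not vacuous), since the left-hand side $\tfrac1\lambda$ is positive; no further regularity of $f$ or $F_i$ is used. The argument is self-contained given~\eqref{function_s} and~\eqref{r_beta_nonnegative}, so I expect the full write-up to be only a few lines.
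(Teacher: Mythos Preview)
Your proof is correct and follows essentially the same approach as the paper: expand $s_{\lambda,\beta}$ via~\eqref{function_s}, use the bound $\sum_i r_\beta(F_i(\tilde x))\le\epsilon^2/4$ and the strict inequality $\sum_i r_\beta(F_i(x))>\epsilon^2/2$, and rearrange. The only cosmetic difference is that the paper substitutes the two bounds directly into the chain of inequalities rather than first isolating the difference $\sum_i r_\beta(F_i(x))-\sum_i r_\beta(F_i(\tilde x))$; your aside about~\eqref{r_beta_nonnegative} is unnecessary, since the positivity of $f(\tilde x)-f(x)$ already follows from the derived bound itself.
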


			\begin{proof}
			From the definition of \(s_{\lambda,\beta}\) in~\eqref{function_s} and the condition \(s_{\lambda,\beta}(x)\leq s_{\lambda,\beta}(\tilde{x})\), it follows that
			\begin{align*}
				f(x)
				+
				\tfrac{1}{\lambda}
				\sum_{i=1}^{p}
				r_{\beta}(F_{i}(x))
			=
				s_{\lambda,\beta}(x)
			\leq{} &
				s_{\lambda,\beta}(\tilde{x})
			\\
			={} &
				f(\tilde{x})
				+
				\tfrac{1}{\lambda}
				\sum_{i=1}^{p}
				r_{\beta}(F_{i}(\tilde{x}))
				\leq
				f(\tilde{x})
				+
				\tfrac{\epsilon^{2}}{4\lambda}.
			\end{align*}
			Then, the violation of~\eqref{stopping_criterion} implies
			\[
			\begin{aligned}
			f(x)+\frac{\epsilon^{2}}{2\lambda}
			<
			f(x)
			+
			\tfrac{1}{\lambda}
			\sum_{i=1}^{p}
			r_{\beta}(F_{i}(x))
			\leq
			f(\tilde{x})
			+
			\tfrac{\epsilon^{2}}{4\lambda},
			\end{aligned}
			\]
			which leads to the result.
			\end{proof}

		\subsection{Algorithm and Outer Complexity}
			Let us now consider \cref{algorithm2}, which is a practical variant of \cref{algorithm1}.
			In this version, the tolerance \(\epsilon\) is held constant rather than decreasing to 0, and similarly, the parameter \(\beta\) is fixed.
			The starting point \(x^0\) is near-feasible rather than strictly feasible.
			Furthermore, a specific stopping criterion and warm-start strategy are introduced.
			This subsection also analyzes the worst-case complexity of the outer iterations, i.e., the number of subproblems that need to be solved.
			The details of the inner solver will be discussed in the next subsection.

			\begin{algorithm}[h]
			\caption{LL-based homotopy approach with near-feasible initialization.}\label{algorithm2}
			\begin{algorithmic}[1]
			\itemsep=3pt%
			\renewcommand{\algorithmicindent}{0.3cm}%
			\item[\textbf{Initialization:}]
			Let \(\nu=1\). Given \(\epsilon>0\), \(\lambda^0>0\), \(\beta\in(0,1)\) and \(x^0\in C\) such that
			\(
			\sum_{i=1}^{p}
			r_{\beta}(F_{i}(x^0))
			\leq
			\nicefrac{\epsilon^{2}}{4}.
			\)
			\item[\textbf{Repeat:}]
			\State  Choose \(\lambda^{\nu}\) and set \(s_{\lambda^{\nu},\beta}:=f+\sum_{i=1}^{p}\frac{1}{\lambda^\nu}
			r_{\beta}\circ F_{i}\).
			\State Find an approximate solution \(x^\nu\) to the problem \(\minimize_{x\in C}s_{\lambda^{\nu},\beta}(x)\) such that
			\begin{itemize}
				\item \(s_{\lambda^{\nu},\beta}(x^\nu)\leq\min\set{s_{\lambda^{\nu},\beta}(x^{\nu-1}),s_{\lambda^{\nu},\beta}(x^0)}\) and
				\item \(\dist (-\nabla s_{\lambda^{\nu},\beta}(x^\nu),N_C(x^\nu))\leq\epsilon\)
			\end{itemize}
			by using an inner solver from the starting point
			\(x^{\nu,0}\coloneqq\arg\min\set{s_{\lambda^{\nu},\beta}(\hat x)}[\hat{x}\in\set{x^{\nu-1},x^{0}}]\).
			\State Let \(\nu\leftarrow\nu+1\).
			\item[\textbf{Until:} The stopping criterion
			\(
				\sum_{i=1}^{p}
			r_{\beta}(F_{i}(x^\nu))
				\leq
				\nicefrac{\epsilon^{2}}{2}
			\)
			is satisfied.]
			\end{algorithmic}
			\end{algorithm}

			\begin{remark}
				Step 2 of \cref{algorithm2} is realistic for first-order methods.
				The descent condition (the first requirement) follows directly from standard convergence guarantees for descent methods; see, for instance,~\cite[Theorem 10.15]{Beck_2017}.
				The approximate stationarity condition (the second requirement) is widely employed as a practical stopping criterion.
				This criterion can be satisfied within at most \(\mathcal{O}(\epsilon^{-2})\) iterations by standard first-order methods, such as a projected gradient method~\cite{Lan_2024}, or an accelerated gradient method~\cite{Ghadimi_2019}.
				In practice, quasi-Newton methods are well known to often outperform the aforementioned methods.
				Therefore, we adopt the off-the-shelf L-BFGS-B solver~\cite{Zhu_1997} in our numerical experiments presented in \cref{sec_experiments}.
			\end{remark}

			Before presenting the main result of this subsection, we introduce the following operator:
			\[
				\Pi_{D}^{\rm C,\beta}(z)
				\coloneqq
				\begin{cases}
					\Pi_{D}(z) & \text{if } z\in\mathrm{O}_{-}\cup\mathrm{H}_{\beta}^{\pm}
				\\
					(0,0) & \text{if } z\in\interior\mathrm{T}_{\beta},
				\end{cases}
			\]
			which projects the point \(z\) to the origin when \(z\in\interior\mathrm{T}_{\beta}\), rather than to one of the two branches of \(D\).
			The following theorem provides an upper bound on the number of outer iterations in~\cref{algorithm2},
			and shows that the returned point is an approximate stationary point of~\eqref{MPCC_simple}.

			\begin{theorem}[Outer complexity]\label{theorem_approximate_C}
			Let~\cref{assumption_level_set} hold and consider the iterates generated by \cref{algorithm2} with \(\lambda^{\nu}\) being updated as \(\lambda^{\nu}=\rho\lambda^{\nu-1}\) with \(\rho\in(0,1)\).
			Then, \cref{algorithm2} terminates in at most
			\begin{equation}\label{outer_complexity_bound}
			\bar{\nu}<
			1+
			\log_{\frac{1}{\rho}}
			\begin{pmatrix}
				\frac{4\lambda^{0}(f(x^{0})-\underline{f})}{\epsilon^{2}}
			\end{pmatrix}
			\end{equation}
			iterations.
			Furthermore, the last iterate \(x^{\bar{\nu}}\) satisfies the approximate C-sta\-tion\-ary condition
			\[
			\begin{cases}
			\dist
			\begin{pmatrix}
				-
				\nabla f(x^{\bar{\nu}})
				-
				\sum_{i=1}^{p}
				JF_{i}{(x^{\bar{\nu}})}^{\top}y_{i}^{\bar{\nu}},\
				N_{C}(x^{\bar{\nu}})
			\end{pmatrix}
			\leq
			\epsilon
			\\[5pt]
			\begin{Vmatrix}
				\min\set{
				G(x^{\bar{\nu}}),
				H(x^{\bar{\nu}})
				}
			\end{Vmatrix}
			\leq
			\epsilon
			\\[5pt]
			y_{i}^{\bar{\nu}}\in N_D^{\rm C}(z_i^{\bar{\nu}})
			\end{cases}
			\]
			for all \(i\in\set{1,\ldots,p}\),
			where \(z_i^{\bar{\nu}}=\Pi_{D}^{\rm C,\beta}(F_{i}(x^{\bar{\nu}}))\) and \(y_i^{\bar{\nu}}=\tfrac{1}{\lambda^{\bar{\nu}}}R_{\beta}(F_i(x^{\bar{\nu}}))\).

			\end{theorem}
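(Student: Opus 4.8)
The plan is to handle the two assertions in turn: the iteration count, then the approximate C-stationarity of the returned point $x^{\bar\nu}$. I would first record that every subproblem in \cref{algorithm2} is solvable and that Step~2 is always executable: by \cref{lemma_level_set} the sublevel set $\lev_{\leq s_{\lambda^\nu,\beta}(x^{\nu,0})}[s_{\lambda^\nu,\beta};C]$ is compact, so $\minimize_{x\in C} s_{\lambda^\nu,\beta}$ admits a global minimizer, and such a minimizer trivially satisfies both acceptance conditions (the sublevel one because it is a global minimum, and $\dist(-\nabla s_{\lambda^\nu,\beta}(x^\nu),N_C(x^\nu))=0\le\epsilon$ from the first-order optimality condition on the convex set $C$). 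Thus the only question is how many outer iterations occur.

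For the iteration bound, fix any index $\nu$ at which the stopping criterion fails, i.e. $\sum_{i=1}^p r_\beta(F_i(x^\nu))>\epsilon^2/2$. I would apply \cref{lemma_mu_lowerbound} with $x=x^\nu$, $\tilde x=x^0$, $\lambda=\lambda^\nu$: its hypotheses hold because the acceptance condition enforced in \cref{algorithm2} gives $s_{\lambda^\nu,\beta}(x^\nu)\le s_{\lambda^\nu,\beta}(x^0)$, and the initialization gives $\sum_i r_\beta(F_i(x^0))\le\epsilon^2/4$. This yields $1/\lambda^\nu<4\big(f(x^0)-f(x^\nu)\big)/\epsilon^2$, and since \cref{assumption_level_set} forces $f(x^\nu)\ge\underline{f}$, also $1/\lambda^\nu<4\big(f(x^0)-\underline{f}\big)/\epsilon^2$. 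Substituting $\lambda^\nu=\rho^\nu\lambda^0$ and taking $\log_{1/\rho}$ (monotone increasing, as $1/\rho>1$) gives $\nu<\log_{1/\rho}\big(4\lambda^0(f(x^0)-\underline{f})/\epsilon^2\big)$. Since this holds for every non-terminating iteration, in particular for $\nu=\bar\nu-1$, the bound~\eqref{outer_complexity_bound} follows; the degenerate case $f(x^0)=\underline{f}$ (or argument $\le 1$) forces $\bar\nu=1$ and is noted separately.

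For the approximate stationarity of $x^{\bar\nu}$: at termination $\sum_i r_\beta(F_i(x^{\bar\nu}))\le\epsilon^2/2$, so \cref{lemma_stopping_criterion} immediately gives $\|\min\{G(x^{\bar\nu}),H(x^{\bar\nu})\}\|\le\epsilon$. The gradient condition is just the acceptance bound $\dist(-\nabla s_{\lambda^{\bar\nu},\beta}(x^{\bar\nu}),N_C(x^{\bar\nu}))\le\epsilon$ read through the chain rule $\nabla s_{\lambda,\beta}(x)=\nabla f(x)+\sum_i JF_i(x)^\top\big(\tfrac1\lambda R_\beta(F_i(x))\big)$, recognizing $y_i^{\bar\nu}=\tfrac{1}{\lambda^{\bar\nu}}R_\beta(F_i(x^{\bar\nu}))$. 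For the cone membership $y_i^{\bar\nu}\in N_D^{\rm C}(z_i^{\bar\nu})$, since each $N_D^{\rm C}(\cdot)$ is a cone it suffices to show $R_\beta(z)\in N_D^{\rm C}(\Pi_{D}^{\rm C,\beta}(z))$ for every $z\in\R^2$. If $z\in\mathrm{O}_{-}\cup\mathrm{H}_\beta^{\pm}$ then $\Pi_{D}^{\rm C,\beta}(z)=\Pi_D(z)$ and $z\notin\interior\mathrm{T}_{\beta}$, so \cref{thm:Ncone} gives $R_\beta(z)\in\hat N_D(\Pi_D(z))\subseteq N_D^{\rm C}(\Pi_D(z))$. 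If $z\in\interior\mathrm{T}_{\beta}$ then $\Pi_{D}^{\rm C,\beta}(z)=(0,0)$, and on this region the formula~\eqref{R_beta} reduces to $R_\beta(z)=\tfrac{1}{\beta(2-\beta)}\big(z_2-(1-\beta)z_1,\ z_1-(1-\beta)z_2\big)$, whose components are strictly positive precisely because $(1-\beta)z_1<z_2<\tfrac{1}{1-\beta}z_1$ there; hence $R_\beta(z)\in\R_{++}^2\subseteq\R_+^2\subseteq N_D^{\rm C}((0,0))$.

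The routine part is the iteration count, a direct corollary of \cref{lemma_mu_lowerbound} and the geometric schedule $\lambda^\nu=\rho^\nu\lambda^0$. The main obstacle I anticipate is organizing the cone condition cleanly: one must observe that the tailored projection $\Pi_{D}^{\rm C,\beta}$ is exactly what makes \cref{thm:Ncone} applicable on $\mathrm{O}_{-}\cup\mathrm{H}_\beta^{\pm}$, while on the problematic region $\interior\mathrm{T}_{\beta}$ --- where every component of $R_\beta$ is strictly positive and hence $R_\beta(z)$ lies in no $\hat N_D$ --- the enlarged cone $N_D^{\rm C}((0,0))\supseteq\R_+^2$ is just wide enough to absorb it. A minor point to dispatch is the vacuous case where $f(x^0)$ is already at its lower bound (or the logarithm's argument does not exceed $1$), which makes the algorithm stop at the first iteration.
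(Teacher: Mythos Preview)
Your proposal is correct and follows essentially the same route as the paper: the iteration bound is obtained by applying \cref{lemma_mu_lowerbound} at the last non-terminating step together with $\lambda^\nu=\rho^\nu\lambda^0$, and the approximate C-stationarity is read off from the acceptance condition, \cref{lemma_stopping_criterion}, and \cref{thm:Ncone} on $\mathrm{O}_{-}\cup\mathrm{H}_\beta^{\pm}$ while invoking $R_\beta(z)\in\R_{++}^2\subseteq N_D^{\rm C}((0,0))$ on $\interior\mathrm{T}_\beta$. Your additions (the solvability remark for Step~2 and the handling of the degenerate case $f(x^0)=\underline{f}$) are harmless embellishments not present in the paper's proof.
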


			\begin{proof}
			If~\cref{algorithm2} is terminated by meeting \(
				\sum_{i=1}^{p}
			r_{\beta}(F_{i}(x^{\bar{\nu}}))
				\leq
				\nicefrac{\epsilon^{2}}{2}
			\),
			the following inequality holds at the iteration \(\bar{\nu}-1\):
			\[
			\sum_{i=1}^{p}
			r_{\beta}(F_{i}(x^{\bar{\nu}-1}))
				>
				\frac{\epsilon^{2}}{2}
			\]
			(otherwise, the algorithm would have terminated at iteration \(\bar{\nu}-1\)).
			Recalling that \cref{assumption_level_set} implies the existence of \(\underline{f}\), and applying~\cref{lemma_mu_lowerbound} along with the condition \(s_{\lambda^{\bar\nu-1},\beta}(x^{\bar\nu-1})\leq s_{\lambda^{\bar\nu-1},\beta}(x^0)\), we obtain
			\[
			\frac{1}{\lambda^{\bar\nu-1}}
			<
			\frac{4(f(x^{0})-f(x^{\bar\nu-1}))}{\epsilon^{2}}
			\leq
			\frac{4(f(x^{0})-\underline{f})}{\epsilon^{2}}.
			\]
			On the other hand, the tuning strategy indicates \(\lambda^{\bar\nu-1}=\rho^{\bar\nu-1}\lambda^{0}\), which leads to the following:
			\[
			\frac{1}{\rho^{\bar\nu-1}}
			<
			\frac{4\lambda^{0}(f(x^{0})-\underline{f})}{\epsilon^{2}}.
			\]
			Since $\rho\in(0,1)$, applying $\log_{1/\rho}$ to both sides yields~\eqref{outer_complexity_bound}.
			Moreover, the above discussion implies that if
			\[
			\bar{\nu}\geq
			1+
			\log_{\frac{1}{\rho}}
			\begin{pmatrix}
				\frac{4\lambda^{0}(f(x^{0})-\underline{f})}{\epsilon^{2}}
			\end{pmatrix},
			\]
			then the stopping criterion~\eqref{stopping_criterion} must be satisfied, i.e.,
			\[
			\sum_{i=1}^{p}
			r_{\beta}(F_{i}(x^{\bar{\nu}-1}))
				\leq
				\frac{\epsilon^{2}}{2}.
			\]
			Therefore, \cref{algorithm2} terminates finitely.
			In turn, this also shows that the complexity bound~\eqref{outer_complexity_bound} must be true, for otherwise the algorithm would have terminated earlier.

			Let us now consider the approximate C-stationarity.
			The first-order condition follows directly from~\cref{algorithm2},
			while the approximate satisfaction of the CCs is ensured by~\cref{lemma_stopping_criterion}.
			Regarding the inclusion \(y_{i}^{\bar{\nu}}\in N_D^{\rm C}(z_i^{\bar{\nu}})\), \cref{thm:Ncone} implies that \(R_\beta(F_{i}(x^{\bar{\nu}}))\in\hat{N}_D(z_i^{\bar{\nu}})\) with \(z_i^{\bar{\nu}}=\Pi_D(F_{i}(x^{\bar{\nu}}))\) when \(F_{i}(x^{\bar{\nu}})\notin\interior\mathrm{T}_{\beta}\).
			In case
			\(F_{i}(x^{\bar{\nu}})\in\interior\mathrm{T}_{\beta}\), we have \(\Pi_{D}^{\rm C,\beta}(F_{i}(x^{\bar{\nu}}))=(0,0)\).
			Hence, it follows that \(R_\beta(F_{i}(x^{\bar{\nu}}))\in N_D^{\rm C}(z_i^{\bar{\nu}})\) with \(z_i^{\bar{\nu}}=\Pi_D^{\rm C,\beta}(F_{i}(x^{\bar{\nu}}))\).
			Since \(\lambda^{\bar{\nu}}>0\) (as \cref{algorithm2} terminates finitely), and due to the property of the cone, we conclude that \(y_i^{\bar{\nu}}=\tfrac{1}{\lambda^{\bar{\nu}}}R_{\beta}(F_i(x^{\bar{\nu}}))\in N_D^{\rm C}(z_i^{\bar{\nu}})\) with \(z_i^{\bar{\nu}}=\Pi_D^{\rm C,\beta}(F_{i}(x^{\bar{\nu}}))\) for all \(F_i(x^{\bar{\nu}})\in\R^2\).
			\end{proof}

			\begin{remark}
			If \(F_i\) is well-behaved for all \(i\in\set{1,\ldots,p}\), then analogously to \cref{theorem_M}, we refer to $x^{\bar{\nu}}$ as an approximate M-stationary point.
			Furthermore, it is worth noting that the definitions of approximate C/M-stationarity do not require any CQs.
			Under \BCCQ/\BCQ+, in the limiting setting as \(\epsilon\searrow0\) and \(\lambda\searrow0\), they turn to be standard C/M-stationarity as in~\cref{definition_M_C}.
			\end{remark}

		\subsection{Total Complexity}
			To determine the total complexity of \cref{algorithm2}, it is necessary to estimate the computational cost of the inner solver for each subproblem.
			To this end, we first analyze a smoothness property of \(s_{\lambda,\beta}\).

			\begin{lemma}\label{lemma_Lipschitz}
			Let~\cref{assumption_Lipschitz,assumption_level_set} hold, and let \(s_{\lambda,\beta}\) be as in~\eqref{function_s}
			with \(\lambda>0\), \(\beta\in(0,1)\) and \(x^0\in C\).
			Then, the gradient \(\nabla s_{\lambda,\beta}\) is Lipschitz continuous on \(\conv (\lev_{\leq s_{\lambda,\beta}(x^{0})}[s_{\lambda,\beta};C])\) with modulus
			\begin{equation}\label{Lipschitz_sub_modulus_1}
			L_{\lambda}(x^{0})
			=
			L_{f}
			+
			\tfrac{1}{\lambda}
			\tilde{L}_{\lambda}(x^{0}),
			\end{equation}
			where
			\begin{equation}\label{Lipschitz_sub_modulus}
				\tilde{L}_{\lambda}(x^{0})
			=
				\sum_{i=1}^{p}
				\biggl[
					L_i
					\begin{pmatrix}
					L_\beta
					K_i
					D^0_\lambda
					+
					\begin{Vmatrix}
						R_{\beta}(F_i(x^0))
					\end{Vmatrix}
					\end{pmatrix}
					+
			% 		\sum_{i=1}^{p}
					L_\beta
					K_i
					\begin{pmatrix}
						L_i
						D^0_\lambda
						+
						\begin{Vmatrix}
						R_{\beta}(F_i(x^0))
						\end{Vmatrix}
					\end{pmatrix}
				\biggr]
			\end{equation}
			and the constant \(D^0_\lambda\) is defined in~\eqref{finite_value_D}.
			\end{lemma}

			\begin{proof}
			For all \(x,\tilde{x}\in\conv (\lev_{\leq s_{\lambda,\beta}(x^{0})}[s_{\lambda,\beta};C])\), we have
			\begin{subequations}
			\allowdisplaybreaks
			\begin{align*}
			&
			\begin{Vmatrix}
				\nabla s_{\lambda,\beta}(x)
				-
				\nabla s_{\lambda,\beta}(\tilde{x})
			\end{Vmatrix}
			\\
			={} &
			\begin{Vmatrix}
				\nabla f(x) - \nabla f(\tilde{x})
				+
				\tfrac{1}{\lambda}
				\sum_{i=1}^{p}
				\biggl[
					JF_i{(x)}^\top R_{\beta}(F_i(x))
					-
			% 		\tfrac{1}{\lambda}
			% 		\sum_{i=1}^{p}
					JF_i{(\tilde{x})}^\top R_{\beta}(F_i(\tilde{x}))
				\biggr]
			\end{Vmatrix}
			\\
			\leq{} &
			L_{f}
			\begin{Vmatrix}
				x-\tilde{x}
			\end{Vmatrix}
			+
			\tfrac{1}{\lambda}
			\sum_{i=1}^{p}
			\begin{Vmatrix}
				JF_i{(x)}^\top R_{\beta}(F_i(x))
				-
				JF_i{(\tilde{x})}^\top R_{\beta}(F_i(\tilde{x}))
			\end{Vmatrix}
			\\
			\leq{} &
			L_{f}
			\begin{Vmatrix}
				x-\tilde{x}
			\end{Vmatrix}
			+
			\tfrac{1}{\lambda}
			\sum_{i=1}^{p}
			\begin{Vmatrix}
				JF_i{(x)}^\top R_{\beta}(F_i(x))
				-
				JF_i{(\tilde{x})}^\top R_{\beta}(F_i(x))
			\end{Vmatrix}
			\\
			&
				\hphantom{  L_{f}
			\begin{Vmatrix}
				x-\tilde{x}
			\end{Vmatrix}}
				+
				\tfrac{1}{\lambda}
				\sum_{i=1}^{p}
				\begin{Vmatrix}
				JF_i{(\tilde{x})}^\top R_{\beta}(F_i(x))
				-
				JF_i{(\tilde{x})}^\top R_{\beta}(F_i(\tilde{x}))
			\end{Vmatrix}
			\\
			\leq{} &
			L_{f}
			\begin{Vmatrix}
				x-\tilde{x}
			\end{Vmatrix}
			+
			\tfrac{1}{\lambda}
			\sum_{i=1}^{p}
			\begin{Vmatrix}
				JF_i{(x)}
				-
				JF_i{(\tilde{x})}
			\end{Vmatrix}
			\begin{Vmatrix}
				R_{\beta}(F_i(x))
			\end{Vmatrix}
			\\
			&
				\hphantom{  L_{f}
				\begin{Vmatrix}
					x-\tilde{x}
				\end{Vmatrix}}
				+
				\tfrac{1}{\lambda}
				\sum_{i=1}^{p}
				\begin{Vmatrix}
				JF_i{(\tilde{x})}
			\end{Vmatrix}
			\begin{Vmatrix}
				R_{\beta}(F_i(x))
				-
				R_{\beta}(F_i(\tilde{x}))
			\end{Vmatrix}
			\\
			\leq{} &
			L_{f}
			\begin{Vmatrix}
				x-\tilde{x}
			\end{Vmatrix}
			+
			\tfrac{1}{\lambda}
			\sum_{i=1}^{p}
			L_i
			\begin{Vmatrix}
				R_{\beta}(F_i(x))
			\end{Vmatrix}
			\begin{Vmatrix}
				x-\tilde{x}
			\end{Vmatrix}
			\\
			&
				\hphantom{  L_{f}
				\begin{Vmatrix}
					x-\tilde{x}
				\end{Vmatrix}}
				+
				\tfrac{1}{\lambda}
				\sum_{i=1}^{p}
				L_\beta
				\begin{Vmatrix}
				JF_i{(\tilde{x})}
			\end{Vmatrix}
			\begin{Vmatrix}
				F_i(x)
				-
				F_i(\tilde{x})
			\end{Vmatrix}
			\\
			\leq{} &
			L_{f}
			\begin{Vmatrix}
				x-\tilde{x}
			\end{Vmatrix}
			+
			\tfrac{1}{\lambda}
			\sum_{i=1}^{p}
			L_i
			\begin{Vmatrix}
				R_{\beta}(F_i(x))
			\end{Vmatrix}
			\begin{Vmatrix}
				x-\tilde{x}
			\end{Vmatrix}
			\\
			&
				\hphantom{  L_{f}
				\begin{Vmatrix}
					x-\tilde{x}
				\end{Vmatrix}}
				+
				\tfrac{1}{\lambda}
				\sum_{i=1}^{p}
				L_\beta
				K_i
				\begin{Vmatrix}
				JF_i{(\tilde{x})}
			\end{Vmatrix}
			\begin{Vmatrix}
				x-\tilde{x}
			\end{Vmatrix},
			\end{align*}
			\end{subequations}
			where the Lipschitz modulus \(L_\beta\) is given in \cref{proposition_globally_Lipschitz}.
			We now aim to establish upper bounds for \(\|R_{\beta}(F_i(x))\|\) and \(\|JF_i{(\tilde{x})}\|\), which will be derived based on the point \(x^0\).
			Specifically, we have
			\[
			\begin{aligned}
			\begin{Vmatrix}
				R_{\beta}(F_i(x))
			\end{Vmatrix}
			&\leq
			\begin{Vmatrix}
				R_{\beta}(F_i(x))
				-
				R_{\beta}(F_i(x^0))
			\end{Vmatrix}
			+
			\begin{Vmatrix}
				R_{\beta}(F_i(x^0))
			\end{Vmatrix}
			\\
			&
			\leq
			L_\beta
			K_i
			\begin{Vmatrix}
				x
				-
				x^0
			\end{Vmatrix}
			+
			\begin{Vmatrix}
				R_{\beta}(F_i(x^0))
			\end{Vmatrix}
			\end{aligned}
			\]
			and
			\[
			\begin{aligned}
				\begin{Vmatrix}
				JF_i{(\tilde{x})}
				\end{Vmatrix}
				&\leq
				\begin{Vmatrix}
				JF_i{(\tilde{x})}
				-
				JF_i{(x^0)}
				\end{Vmatrix}
				+
				\begin{Vmatrix}
				JF_i{(x^0)}
				\end{Vmatrix}
				\\
				&
				\leq
				L_i
				\begin{Vmatrix}
				\tilde{x}
				-
				x^0
				\end{Vmatrix}
				+
				\begin{Vmatrix}
				R_{\beta}(F_i(x^0))
				\end{Vmatrix}.
			\end{aligned}
			\]
			\Cref{lemma_level_set} establishes that the level set \(\lev_{\leq s_{\lambda,\beta}(x^0)}[s_{\lambda,\beta};C]\) is compact, then so is \(\conv (\lev_{\leq s_{\lambda,\beta}(x^{0})}[s_{\lambda,\beta};C])\), which implies that the constant
			\begin{equation}\label{finite_value_D}
			D^0_\lambda
			\coloneqq\sup_{x\in\conv (\lev_{\leq s_{\lambda,\beta}(x^{0})}[s_{\lambda,\beta};C])}
			\begin{Vmatrix}
				x-x^{0}
			\end{Vmatrix}
			\end{equation}
			is finite. By summarizing the above results, we conclude that
			\begin{subequations}
			\begin{align*}
				&
				\begin{Vmatrix}
				\nabla s_{\lambda,\beta}(x)
				-
				\nabla s_{\lambda,\beta}(\tilde{x})
				\end{Vmatrix}
			\\
			\leq{} &
				L_{f}
				\begin{Vmatrix}
				x-\tilde{x}
				\end{Vmatrix}
				+
				\tfrac{1}{\lambda}
				\sum_{i=1}^{p}
				L_i
				\begin{pmatrix}
				L_\beta
				K_i
				D^0_\lambda
				+
				\begin{Vmatrix}
					R_{\beta}(F_i(x^0))
				\end{Vmatrix}
				\end{pmatrix}
				\begin{Vmatrix}
				x-\tilde{x}
				\end{Vmatrix}
			\\
			&
				\hphantom{	L_{f}
				\begin{Vmatrix}
				x-\tilde{x}
				\end{Vmatrix}}
				+
				\tfrac{1}{\lambda}
				\sum_{i=1}^{p}
				L_\beta
				K_i
				\begin{pmatrix}
					L_i
					D^0_\lambda
					+
					\begin{Vmatrix}
					R_{\beta}(F_i(x^0))
					\end{Vmatrix}
				\end{pmatrix}
				\begin{Vmatrix}
				x-\tilde{x}
				\end{Vmatrix}.
			\end{align*}
			\end{subequations}
			Therefore, the gradient \(\nabla s_{\lambda,\beta}\) is Lipschitz continuous on \(\conv (\lev_{\leq s_{\lambda,\beta}(x^{0})}[s_{\lambda,\beta};C])\) with the claimed modulus.
			\end{proof}

			In view of \cref{lemma_Lipschitz}, a first-order inner solver can be used to solve the subproblem in \cref{algorithm2}.
			In particular, we consider first-order solvers \(\mathcal{S} _1\) that achieve a complexity bound of \(\mathcal{O}(\epsilon^{-2})\) for finding an approximate stationary point of the subproblem \(\minimize_{x\in C}s_{\lambda^\nu,\beta}(x)\).
			By combining \cref{theorem_approximate_C} and \cref{lemma_Lipschitz}, we can now establish the total complexity of \cref{algorithm2} for generating an \(\epsilon\)-approximate C-stationary point of~\eqref{MPCC_simple}.

			\begin{theorem}[Total complexity]\label{theorem_total_complexity}
			Let~\cref{assumption_Lipschitz,assumption_level_set} hold, and suppose that a first-order solver \(\mathcal{S} _1\) is used to solve the subproblem approximately.
			Consider the iterates generated by \cref{algorithm2} with \(\lambda^{\nu}\) being updated as \(\lambda^{\nu}=\rho\lambda^{\nu-1}\) with \(\rho\in(0,1)\).
			Then, the last iterate \(x^{\bar{\nu}}\) is an \(\epsilon\)-approximate C-stationary point of~\eqref{MPCC_simple} and at most
			\[
			\mathcal{O}
			\begin{pmatrix}
			\epsilon^{-4}\log_{\frac{1}{\rho}}
			\begin{bmatrix}
				\frac{4\lambda^{0}(f(x^{0})-\underline{f})}{\epsilon^{2}}
			\end{bmatrix}
			\end{pmatrix}
			\]
			gradient evaluations are needed.

			\end{theorem}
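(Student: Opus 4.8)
The plan is to bootstrap on \cref{theorem_approximate_C}, which already certifies that the returned iterate \(x^{\bar{\nu}}\) is an \(\epsilon\)-approximate C-stationary point of~\eqref{MPCC_simple} and that the outer-iteration count obeys \(\bar{\nu}<1+\log_{1/\rho}\bigl(4\lambda^{0}(f(x^{0})-\underline{f})/\epsilon^{2}\bigr)\). What remains is to show that \emph{each} of these at most \(\bar{\nu}\) subproblems \(\minimize_{x\in C}s_{\lambda^{\nu},\beta}(x)\) is solved by the inner solver \(\mathcal{S}_1\) using \(\mathcal{O}(\epsilon^{-4})\) gradient evaluations, \emph{uniformly in \(\nu\)}; the product of the two bounds is then exactly the claimed total.

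The key preliminary step is a uniform bound on the penalty weight over the whole run. Since \(\lambda^{\nu}=\rho^{\nu}\lambda^{0}\) is decreasing, its extreme value is \(\lambda^{\bar{\nu}}\). Reusing the mechanism in the proof of \cref{theorem_approximate_C}: non-termination at iteration \(\bar{\nu}-1\) forces \(\sum_{i}r_{\beta}(F_{i}(x^{\bar{\nu}-1}))>\epsilon^{2}/2\), so \cref{lemma_mu_lowerbound} with \(\tilde{x}=x^{0}\) (invoking \(s_{\lambda^{\bar{\nu}-1},\beta}(x^{\bar{\nu}-1})\le s_{\lambda^{\bar{\nu}-1},\beta}(x^{0})\) and \(\sum_{i}r_{\beta}(F_{i}(x^{0}))\le\epsilon^{2}/4\)) gives \(1/\lambda^{\bar{\nu}-1}<4(f(x^{0})-\underline{f})/\epsilon^{2}\), whence
\[
  \max_{1\le\nu\le\bar{\nu}}\tfrac{1}{\lambda^{\nu}}
  =
  \tfrac{1}{\lambda^{\bar{\nu}}}
  =
  \tfrac{1}{\rho}\cdot\tfrac{1}{\lambda^{\bar{\nu}-1}}
  <
  \tfrac{4(f(x^{0})-\underline{f})}{\rho\,\epsilon^{2}}
  =
  \mathcal{O}(\epsilon^{-2})
\]
(the case \(\bar{\nu}=1\) being trivial, as then \(\lambda^{1}=\rho\lambda^{0}\) is \(\epsilon\)-independent).

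Next I would confine all iterates and warm-start points to a single compact set. Using \(r_{\beta}\ge0\) (see~\eqref{r_beta_nonnegative}) and the descent requirement of \cref{algorithm2}, \(f(x^{\nu})\le s_{\lambda^{\nu},\beta}(x^{\nu})\le s_{\lambda^{\nu},\beta}(x^{0})\le f(x^{0})+\epsilon^{2}/(4\lambda^{\nu})\le\alpha_{\max}\), where \(\alpha_{\max}\coloneqq f(x^{0})+(f(x^{0})-\underline{f})/\rho\) is \(\nu\)-independent by the previous display. Arguing as in \cref{lemma_level_set}, the level set \(\lev_{\le s_{\lambda^{\nu},\beta}(x^{\nu,0})}[s_{\lambda^{\nu},\beta};C]\) lies inside the fixed compact set \(\mathcal{L}\coloneqq\lev_{\le\alpha_{\max}}[f;C]\) (compact by \cref{assumption_level_set}), and so do the starting points \(x^{\nu,0}\in\{x^{\nu-1},x^{0}\}\). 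Hence the diameter \(D_{0}\) of~\eqref{finite_value_D} and the norms \(\|R_{\beta}(F_{i}(x^{\nu,0}))\|\) entering \cref{lemma_Lipschitz} are bounded by constants \(D_{\max},R_{\max}\) independent of \(\nu\) (continuity of \(R_{\beta}\circ F_{i}\) on \(\conv\mathcal{L}\)). Feeding these into \cref{lemma_Lipschitz}, with \(x^{\nu,0},\lambda^{\nu}\) in place of \(x^{0},\lambda\), the \(\nu\)th subproblem has smoothness modulus \(L_{\lambda^{\nu}}(x^{\nu,0})=L_{f}+\tfrac{1}{\lambda^{\nu}}L(x^{\nu,0})\le L_{f}+\tfrac{1}{\lambda^{\bar{\nu}}}L_{\max}=\mathcal{O}(\epsilon^{-2})\) (with \(L_{\max}\) the resulting uniform bound on \(L(\cdot)\)), and initial optimality gap \(s_{\lambda^{\nu},\beta}(x^{\nu,0})-\inf_{C}s_{\lambda^{\nu},\beta}\le\alpha_{\max}-\underline{f}=\mathcal{O}(1)\), since \(\inf_{C}s_{\lambda^{\nu},\beta}\ge\inf_{C}f\ge\underline{f}\).

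To conclude, a first-order method \(\mathcal{S}_1\) with the assumed \(\mathcal{O}(\epsilon^{-2})\) guarantee reaches \(\dist(-\nabla s_{\lambda^{\nu},\beta}(x^{\nu}),N_{C}(x^{\nu}))\le\epsilon\) in \(\mathcal{O}\bigl(L_{\lambda^{\nu}}(x^{\nu,0})\,(\alpha_{\max}-\underline{f})\,\epsilon^{-2}\bigr)\) gradient evaluations; being a monotone descent method launched from \(x^{\nu,0}=\arg\min\{s_{\lambda^{\nu},\beta}(\hat{x}):\hat{x}\in\{x^{\nu-1},x^{0}\}\}\), it automatically satisfies the descent condition of \cref{algorithm2}. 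By the uniform estimates above this is \(\mathcal{O}(\epsilon^{-2}\cdot1\cdot\epsilon^{-2})=\mathcal{O}(\epsilon^{-4})\) per subproblem, and multiplying by \(\bar{\nu}<1+\log_{1/\rho}\bigl(4\lambda^{0}(f(x^{0})-\underline{f})/\epsilon^{2}\bigr)\) yields the stated total. I expect the main obstacle to be exactly this uniformity in \(\nu\): one has to rule out that the shrinking \(\lambda^{\nu}\) makes the subproblem conditioning — and thus the data-dependent constants in \cref{lemma_Lipschitz} — worse than \(\mathcal{O}(\epsilon^{-2})\) before termination, and that the warm-started iterates leave a common compact set. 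The penalty-weight bound above, which piggybacks on the outer-complexity argument, is what makes both controllable; the rest is routine compactness together with the sandwich property \cref{fact_sandwich}.
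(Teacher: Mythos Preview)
Your proposal is correct and follows the same architecture as the paper: invoke \cref{theorem_approximate_C} for the outer count and approximate C-stationarity, bound \(1/\lambda^{\nu}=\mathcal{O}(\epsilon^{-2})\) uniformly via the non-termination argument, then combine \cref{lemma_Lipschitz} with the initial-gap bound to get \(\mathcal{O}(\epsilon^{-4})\) gradient evaluations per subproblem. The only differences are cosmetic: the paper applies \cref{lemma_Lipschitz} with the fixed reference point \(x^{0}\) rather than \(x^{\nu,0}\) (so \(L(x^{0})\) is treated directly as a \(\nu\)-independent constant, while you recover this via the common compact level set \(\mathcal{L}\)), and it bounds \(s_{\lambda^{\nu},\beta}(x^{0})\) through a detour via the sandwich property \cref{fact_sandwich} and~\eqref{moreau_compl}, whereas your direct bound \(s_{\lambda^{\nu},\beta}(x^{0})\le f(x^{0})+\epsilon^{2}/(4\lambda^{\nu})\) from the initialization condition is actually simpler and slightly sharper.
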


			\begin{proof}
			By following~\cite{Grapiglia_2023}, to obtain an \(\epsilon\)-stationary point \(x^{\nu}\) of each subproblem, i.e.,
			\[
			\dist
			\begin{pmatrix}
				-
				\nabla f(x^{\nu})
				-
				\frac{1}{\lambda^\nu}
				\sum_{i=1}^{p}
				JF_{i}{(x^{\nu})}^{\top}
				R_\beta(F_i(x^{\nu})),\
				N_{C}(x^{\nu})
			\end{pmatrix}
			\leq\epsilon,
			\]
			the worst-case complexity bound of the inner solver \(\mathcal{S} _1\) is given by
			\begin{equation}\label{first_order_complexity}
			\frac{c_{1}L_{\lambda^\nu}(x^{0})
			\begin{pmatrix}
				s_{\lambda^{\nu},\beta}(x^{\nu,0})-\underline{s}_{\lambda^{\nu},\beta}
			\end{pmatrix}}{\epsilon^{2}},
			\end{equation}
			where \(L_{\lambda^\nu}(x^{0})\) is defined in \cref{lemma_Lipschitz}, $x^{\nu,0}$ is the starting point of the subproblem and $c_{1}>0$ is a constant that depends on the first-order method.
			Note that \(x^{0}\) is used in \(L_{\lambda^\nu}(x^{0})\) as specified in the step 2 of \cref{algorithm2}, i.e., \(s_{\lambda^{\nu},\beta}(x^\nu)\leq s_{\lambda^{\nu},\beta}(x^0)\).
			Furthermore, \(\underline{s}_{\lambda^{\nu},\beta}\) is a lower bound to \(s_{\lambda^{\nu},\beta}\).
			It follows from~\eqref{r_beta_nonnegative},~\eqref{function_s} and \cref{assumption_level_set} that
			\[
			s_{\lambda,\beta}(x)
			=
			f(x)
			+
			\tfrac{1}{\lambda}
			\sum_{i=1}^{p}
			r_{\beta}(F_{i}(x))
			\geq
			f(x)
			\geq
			\underline{f}
			\]
			for all \(x\in\R^n\).
			Therefore, we can let \(\underline{s}_{\lambda^{\nu},\beta}=\underline{f}\).
			We now aim to establish an upper bound for  \(s_{\lambda^{\nu},\beta}(x^{\nu,0})\).
			To this end, let us start by establishing an upper bound of \(\nicefrac{1}{\lambda^{\nu}}\).
			Suppose that \(x^{\bar{\nu}}\) is the point returned by meeting \(
				\sum_{i=1}^{p}
			r_{\beta}(F_{i}(x^{\bar{\nu}}))
				\leq
				\nicefrac{\epsilon^{2}}{2}
			\), and let \(\nu\in\set{1,\ldots,\bar{\nu}}\).
			Then, according to \cref{theorem_approximate_C}, we have
			\begin{equation}\label{lambda_upper_bound}
				\frac{1}{\lambda^{\nu}}
				=
				\frac{1}{\rho^{\nu}}
				\cdot
				\frac{1}{\lambda^{0}}
				\leq
				\frac{1}{\rho^{\bar{\nu}}}
				\cdot
				\frac{1}{\lambda^{0}}
				=
				\frac{1}{\rho}
				\cdot
				\frac{1}{\rho^{\bar{\nu}-1}}
				\cdot
				\frac{1}{\lambda^{0}}
				<
				\frac{1}{\rho}
				\cdot
				\frac{4(f(x^{0})-\underline{f})}{\epsilon^{2}}.
			\end{equation}
			Furthermore, using the sandwich property \cref{fact_sandwich}, the initial condition \(
			\sum_{i=1}^{p}
			r_{\beta}(F_{i}(x^0))
			\leq
			\nicefrac{\epsilon^{2}}{4}
			\)
			implies that
			\[
			\sum_{i=1}^{p}
			\lambda^{\nu}
			\env_{\lambda^{\nu}}
			\delta_D(F_{i}(x^0))
			\leq
			\sum_{i=1}^{p}
			\lambda^{\nu}
			\env_{\lambda^{\nu},\beta\lambda^\nu}
			\delta_D(F_{i}(x^0))
			\leq
			\frac{\epsilon^{2}}{4}.
			\]
			Then, by applying the formulation of the Moreau envelope in~\eqref{moreau_compl}, we obtain
			\begin{equation}\label{moreau_upper_bound}
			\sum_{i=1}^{p}
			\dist
			\begin{pmatrix}
				F_{i}(x^0),D
			\end{pmatrix}^2
			\leq
			\frac{\epsilon^{2}}{2}.
			\end{equation}
			Using again the sandwich property \cref{fact_sandwich}, we obtain an upper bound for \(s_{\lambda^{\nu},\beta}(x^{\nu,0})\):
			\begin{align}
			s_{\lambda^{\nu},\beta}(x^{\nu,0})
			\leq
			s_{\lambda^{\nu},\beta}(x^{0})
			={} &
			f(x^{0})
			+
			\sum_{i=1}^{p}
			\tfrac{1}{\lambda^\nu}
			r_\beta(F_i(x^0))
			\nonumber\\
			\leq{} &
			f(x^{0})
			+
			\sum_{i=1}^{p}
			\env_{\lambda^{\nu}(1-\beta)}\delta_{D}(F_i(x^0))
			\nonumber\\
			\text{\small by \eqref{moreau_compl}}
			={} &
			f(x^{0})
			+
			\frac{1}{2\lambda^{\nu}(1-\beta)}
			\sum_{i=1}^{p}
			\dist
			\begin{pmatrix}
				F_{i}(x^0),D
			\end{pmatrix}^2
			\nonumber\\
			\text{\small by \eqref{moreau_upper_bound}}
			\leq{} &
			f(x^{0})
			+
			\frac{\epsilon^{2}}{4\lambda^{\nu}(1-\beta)}
			\nonumber\\
			\text{\small by \eqref{lambda_upper_bound}}
			<{} &
			\underbrace{
			f(x^{0})
			+
			\frac{f(x^{0})-\underline{f}}
			{\rho(1-\beta)}
			}_{c_{2}:=}.\label{c2}
			\end{align}
			Recalling that \(D^0_\lambda\) is finite for all \(\lambda>0\), as established in \cref{lemma_Lipschitz}, it follows that the constant
			\begin{equation}\label{max_L_lambda}
				L(x^{0})\coloneqq
				\max_{\nu\in\{1,2,\ldots,\bar\nu\} }
				\tilde{L}_{\lambda^\nu}(x^{0})
			\end{equation}
			is also finite, where \(\tilde{L}_{\lambda}(x^{0})\) is defined in~\eqref{Lipschitz_sub_modulus}.
			In this way, by~\eqref{first_order_complexity}, the total number of gradient evaluations in \cref{algorithm2} is bounded by
			\begin{align*}
				\sum_{\nu=1}^{\bar{\nu}}
				c_{1}
				L_{\lambda^\nu}(x^{0})
				\begin{pmatrix}
				s_{\lambda^{\nu},\beta}(x^{\nu,0})-\underline{f}
				\end{pmatrix}
				\epsilon^{-2}
			\leq{} &
				\sum_{\nu=1}^{\bar{\nu}}
				c_{1}
				L_{\lambda^\nu}(x^{0})
				\begin{pmatrix}
				s_{\lambda^{\nu},\beta}(x^0)-\underline{f}
				\end{pmatrix}
				\epsilon^{-2}
			\\
				\text{\small by \eqref{Lipschitz_sub_modulus_1} and \eqref{c2}}
			<{} &
				\sum_{\nu=1}^{\bar{\nu}}
				c_{1}
				(c_{2}-\underline{f})
				\begin{pmatrix}
				L_{f}
				+
				\frac{1}{\lambda^{\nu}}
				\tilde{L}_{\lambda^{\nu}}(x^{0})
				\end{pmatrix}
				\epsilon^{-2}
			\\
				\text{\small by \eqref{max_L_lambda}}
			\leq{} &
				\sum_{\nu=1}^{\bar{\nu}}
				c_{1}
				(c_{2}-\underline{f})
				\begin{pmatrix}
				L_{f}
				+
				\frac{1}{\lambda^{\nu}}
				L(x^{0})
				\end{pmatrix}
				\epsilon^{-2}.
			\end{align*}
			Finally, with the aid of \cref{theorem_approximate_C}, the above result leads to
			\begin{align*}
			&
				\sum_{\nu=1}^{\bar{\nu}}
				c_{1}
				(c_{2}-\underline{f})
				\begin{pmatrix}
					L_{f}
					+
					\frac{1}{\lambda^{\nu}}
					L(x^{0})
				\end{pmatrix}
				\epsilon^{-2}
			\\
				\text{\small by \eqref{lambda_upper_bound}}
			\leq{} &
				c_{1}
				(c_{2}-\underline{f})
				\bar{\nu}
				\begin{pmatrix}
					L_{f}
					+
					\frac{4(f(x^{0})-\underline{f})}{\rho\epsilon^{2}}
					L(x^{0})
				\end{pmatrix}
				\epsilon^{-2}
			\\
				\text{\small by \eqref{outer_complexity_bound}}
			<{} &
				c_{1}
				(c_{2}-\underline{f})
				\begin{pmatrix}
				1+
				\log_{\frac{1}{\rho}}
				\begin{pmatrix}
					\frac{4\lambda^{0}(f(x^{0})-\underline{f})}{\epsilon^{2}}
				\end{pmatrix}
				\end{pmatrix}
				\begin{pmatrix}
					L_{f}
					+
					\frac{4(f(x^{0})-\underline{f})}{\rho\epsilon^{2}}
					L(x^{0})
				\end{pmatrix}
				\epsilon^{-2}.
			\end{align*}
			Therefore, we conclude that the total complexity of \cref{algorithm2} to obtain an \(\epsilon\)-approximate C-stationary point is \(\mathcal{O}(\epsilon^{-4}\log_{\frac{1}{\rho}}[4\lambda^{0}(f(x^{0})-\underline{f})\epsilon^{-2}])\).
			\end{proof}

		\subsection{Two-Stage Approach}
			The complexity bound in the previous subsection is derived under the assumption that the starting point of the homotopy method is near-feasible, namely such that \(
			\sum_{i=1}^{p}
			r_{\beta}(F_{i}(x^0))
			\leq
			\nicefrac{\epsilon^{2}}{4}.
			\)
			To obtain such a point, we solve the following problem starting from an (infeasible) initial point \(\tilde{x}^{0}\):
			\begin{equation}\label{feasibility_prob}
			\minimize_{x\in\R^n}\ \mathcal{G}(x)
			=
			\sum_{i=1}^{p}
			r_\beta(F_i(x))
			+
			\delta_{C}(x),
			\end{equation}
			where \(\delta_{C}\) is the indicator function of the set \(C\).
			The solution \(x^{0}\) of~\eqref{feasibility_prob} is then used to initialize \cref{algorithm2}.
			This yields a two-stage approach for solving~\eqref{MPCC_simple}.
			\cref{fact_global_optimum} implies that the global minimum of \(\sum_{i=1}^{p}
			r_{\beta}\circ F_i\) is 0, which occurs when \(x\) is feasible for all \(F_i\).
			Moreover, since we assume \(\set{x\in C}[F_{i}(x)\in D,i=1,\ldots,p]\) is nonempty, the global minimum of~\eqref{feasibility_prob} is 0.
			Therefore, the problem~\eqref{feasibility_prob} can be interpreted as a feasibility problem for~\eqref{MPCC_simple}.

			The above observation motivates the introduction of the \emph{proximal-PL} property for~\eqref{feasibility_prob},
			as it ensures that all first-order stationary points attain the global minimum; see~\cite[Section 4]{Karimi_2016} for details.
			Although \cref{proposition_PL} establishes that \(r_\beta\) has the PL property, this does not necessarily imply that \(\mathcal{G}\) has the proximal-PL property.
			However, as shown in~\cite[Appendix G]{Karimi_2016}, the proximal-PL property is equivalent to the \emph{Kurdyka-\L{}ojasiewicz (KL)} property over \(\dom\mathcal{G}\).
			Therefore, in the special case of \(p=1\), we provide the following result to characterize conditions under which \(r_\beta\circ F_i+\delta_C\) has the KL property over \(\dom(r_\beta\circ F_i+\delta_C)\).
			To maintain notational consistency, we retain the subscript \(i\) of \(F_i\) even if \(p=1\).

			\begin{proposition}[KL property of \(r_\beta\circ F_i+\delta_C\)]\label{proposition_PL_linear}
			Fix \(\beta\in(0,1)\), let \(r_\beta,R_\beta\) be as in~\eqref{r_beta} and~\eqref{R_beta}, and \(F_i(x)=(G_i(x),H_i(x))\).
			Suppose that
			\begin{enumerate}%[label=(\roman*)]
				\item[either] (i) \(\nabla G_i(x)\) and \(\nabla H_i(x)\) are linearly independent for all \(x\in\R^n\),
				\item[or] (ii) \(\nabla G_i(x)\neq0\) and \(H_i(x)=0\) for all \(x\in\R^n\).
			\end{enumerate}
			Then, any point \(x\) such that \(\nabla (r_\beta\circ F_i)(x) = 0\) satisfies \(F_i(x)\in D\).
			% {\color{red}%
			If the conditions can be strengthened to
			\begin{enumerate}%[label=(\roman*)]
				\item[either] (i') \(\inf_{x\in\R^n}\lambda_{\rm min}(JF_i(x){JF_i(x)}^\top)>0\), where \(\lambda_{\rm min}\) denotes the minimal eigenvalue,
				\item[or] (ii') \(\inf_{x\in\R^n}\|\nabla G_i(x)\|>0\) and \(H_i(x)=0\),
			\end{enumerate}\noindent
			% }%
			and the following inclusion
			\begin{equation}\label{set_intersection}
			\B
			\begin{pmatrix}
				-
				\nabla (r_\beta\circ F_i)(x)
				,
				\begin{Vmatrix}
				\nabla (r_\beta\circ F_i)(x)
			\end{Vmatrix}
			\end{pmatrix}
			\cap
			N_C(x)
			=
			\set{0}
			\end{equation}
			holds,
			then the composite function \(r_\beta\circ F_i+\delta_C\) has the KL property over \(\dom(r_\beta\circ F_i+\delta_C)\).
			\end{proposition}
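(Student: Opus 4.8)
The plan is to handle the two assertions in turn, reducing everything to the chain rule for $r_\beta\circ F_i$, the PL inequality for $r_\beta$ established above, and a careful reading of the Jacobian hypotheses and of~\eqref{set_intersection}. For the first assertion I would use $\nabla r_\beta=R_\beta$ and the chain rule to write $\nabla(r_\beta\circ F_i)(x)=JF_i(x)^\top R_\beta(F_i(x))$. Under hypothesis~(i) the linear independence of $\nabla G_i(x),\nabla H_i(x)$ makes $JF_i(x)^\top$ injective, so $\nabla(r_\beta\circ F_i)(x)=0$ forces $R_\beta(F_i(x))=0$; since $r_\beta$ satisfies the PL inequality with $r_\beta^\star=0$ and $\arg\min r_\beta=D$, this gives $r_\beta(F_i(x))=0$, i.e.\ $F_i(x)\in D$. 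Under hypothesis~(ii) one has $H_i\equiv0$, hence $\nabla H_i\equiv0$ and $JF_i(x)^\top w=w_1\nabla G_i(x)$; with $\nabla G_i(x)\neq0$ the stationarity equation reduces to $(R_\beta(F_i(x)))_1=0$. Since $F_i(x)=(G_i(x),0)$ always lies on the $z_1$–axis, a glance at~\eqref{R_beta} shows that its first component can vanish only when $G_i(x)\ge0$ (if $G_i(x)<0$ then $F_i(x)\in\mathrm{O}_-$ and $(R_\beta(F_i(x)))_1=\tfrac1{1-\beta}G_i(x)\ne0$), so again $F_i(x)\in D$.

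For the second assertion I would set $\psi\coloneqq r_\beta\circ F_i+\delta_C$. Since $r_\beta\circ F_i$ is continuously differentiable and $C$ is closed and convex, $\dom\psi=\dom\partial\psi=C$ and $\partial\psi(x)=\nabla(r_\beta\circ F_i)(x)+N_C(x)$ on $C$, so $\dist(0,\partial\psi(x))=\dist(-\nabla(r_\beta\circ F_i)(x),N_C(x))$. The crucial step — and the one I expect to be the main obstacle — is to convert~\eqref{set_intersection} into an equality: writing $g\coloneqq\nabla(r_\beta\circ F_i)(x)$, the origin lies in $N_C(x)$ and on the boundary sphere of $\B(-g,\|g\|)$, so any point of $N_C(x)$ at distance at most $\|g\|$ from $-g$ belongs to $\B(-g,\|g\|)\cap N_C(x)=\{0\}$; hence the projection of $-g$ onto $N_C(x)$ is $0$ and
\[
  \dist(0,\partial\psi(x))=\|\nabla(r_\beta\circ F_i)(x)\|\qquad\text{for every }x\in C.
\]
Without~\eqref{set_intersection} one only has the opposite, useless inequality $\dist(0,\partial\psi(x))\le\|\nabla(r_\beta\circ F_i)(x)\|$; the hypothesis is precisely what prevents the normal-cone part of the subdifferential from cancelling the gradient.

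Next I would bound $\|\nabla(r_\beta\circ F_i)(x)\|$ from below by a multiple of $\sqrt{\psi(x)}$. Under~(i'), with $\sigma\coloneqq\sqrt{\inf_x\lambda_{\rm min}(JF_i(x)JF_i(x)^\top)}>0$ one has $\|JF_i(x)^\top w\|\ge\sigma\|w\|$ for all $w$, hence $\|\nabla(r_\beta\circ F_i)(x)\|\ge\sigma\|R_\beta(F_i(x))\|$. Under~(ii') the image $F_i(x)=(G_i(x),0)$ stays on the $z_1$–axis, where~\eqref{R_beta} gives $\|R_\beta(z)\|=|(R_\beta(z))_1|$, and since $JF_i(x)^\top R_\beta(F_i(x))=(R_\beta(F_i(x)))_1\nabla G_i(x)$ one gets $\|\nabla(r_\beta\circ F_i)(x)\|\ge\gamma\|R_\beta(F_i(x))\|$ with $\gamma\coloneqq\inf_x\|\nabla G_i(x)\|>0$. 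Invoking the PL inequality $\tfrac12\|R_\beta(z)\|^2\ge l_\beta r_\beta(z)$ and recalling $\psi(x)=r_\beta(F_i(x))\ge0$ on $C$, in either case there is $c>0$ with $\dist(0,\partial\psi(x))\ge c\sqrt{\psi(x)}$ for all $x\in C$.

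Finally I would observe that this is a global Kurdyka–\L{}ojasiewicz inequality of exponent $\tfrac12$: for any $\bar x\in\dom\psi$ and any $x$ with $\psi(x)>\psi(\bar x)\ge0$ one has $\sqrt{\psi(x)}\ge\sqrt{\psi(x)-\psi(\bar x)}$, so with the concave desingularizing function $\varphi(s)=\tfrac2c\sqrt s$ (on $[0,\eta)$ for any fixed $\eta>0$) the inequality $\varphi'(\psi(x)-\psi(\bar x))\,\dist(0,\partial\psi(x))\ge1$ holds, with $U=\R^n$. Since $\bar x\in\dom\psi$ was arbitrary and $\varphi$ is uniform, $r_\beta\circ F_i+\delta_C$ has the KL property over its domain (and the standing feasibility assumption gives $\inf\psi=0$, consistent with the first part that every critical point is feasible, hence globally optimal). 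The routine parts are the chain rule, the sum rule for $\partial\psi$, and the elementary estimates; the only delicate point is the geometric reading of~\eqref{set_intersection} that yields $\dist(0,\partial\psi(x))=\|\nabla(r_\beta\circ F_i)(x)\|$.
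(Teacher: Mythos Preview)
Your proposal is correct and follows essentially the same route as the paper: chain rule plus injectivity/rank structure of $JF_i(x)^\top$ combined with the PL inequality for $r_\beta$ to handle the first assertion, then the uniform lower bounds in (i')/(ii') to get $\|\nabla(r_\beta\circ F_i)(x)\|^2\ge 2l_{\beta,i}\,r_\beta(F_i(x))$, and finally the geometric reading of~\eqref{set_intersection} (which the paper does algebraically via $\|\xi\|^2+2\langle g,\xi\rangle\ge0$ for $\xi\in N_C(x)$) to conclude $\min_{\xi\in N_C(x)}\|g+\xi\|^2\ge\|g\|^2$ and hence the KL inequality. The only cosmetic difference is that you phrase the last step as ``the projection of $-g$ onto $N_C(x)$ is $0$'' and write the desingularizing function explicitly, whereas the paper expands the square and cites \cite{Karimi_2016} for the KL formulation.
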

			\begin{proof}
			Let us first consider condition (i).
			We have
			\[
			\begin{aligned}
				\begin{Vmatrix}
				\nabla (r_\beta\circ F_i)(x)
				\end{Vmatrix}^2
				&=
				\begin{Vmatrix}
				{JF_i(x)}^\top
				R_\beta(F_i(x))
				\end{Vmatrix}^2
				\\
				&=
				{R_\beta(F_i(x))}^\top
				\begin{pmatrix}
				JF_i(x)
				{JF_i(x)}^\top
				\end{pmatrix}
				R_\beta(F_i(x)).
			\end{aligned}
			\]
			Linear independence of \(\nabla G_i(x)\) and \(\nabla H_i(x)\) implies that the minimal eigenvalue of \(JF_i(x){JF_i(x)}^\top\), denoted by \(q_i(x)\), satisfies \(q_i(x)>0\).
			Then, by \cref{proposition_PL}, we obtain
			\begin{align}
				\nonumber
				\tfrac{1}{2}
				\begin{Vmatrix}
				\nabla (r_\beta\circ F_i)(x)
				\end{Vmatrix}^2
				&\geq
				\tfrac{q_i(x)}{2}
				\begin{Vmatrix}
				R_\beta(F_i(x))
				\end{Vmatrix}^2\\
				&\geq
				q_i(x)
				l_\beta
				(r_\beta\circ F_i)(x).
				\label{eq:KLi}
			\end{align}
				Since \(q_i(x)>0\), if \(\nabla (r_\beta\circ F_i)(x)=0\) one must have \(r_\beta(F_i(x))=0\), which by \cref{fact_global_optimum} implies that \(F_i(x)\in\argmin\delta_D=D\).

				For condition (ii), the transpose of the Jacobian becomes \(JF_i{(x)}^\top=(\nabla G_i(x),0)\). Let \(z=(z_1,z_2)=(z_1,0)\).
				Then, according to~\eqref{r_beta} and~\eqref{R_beta}, we have
			\[
			r_\beta(z)=
			\begin{cases}
				\tfrac{1}{2(1-\beta)}z_1^2
				\\
				0
			\end{cases}
			\ \ \text{and}\ \
			R_\beta(z)=
			\begin{cases}
				\tfrac{1}{1-\beta}\binom{z_1}{0} \ \ \ \ &\mbox{if } z_1\leq0
				\\
				\binom{0}{0} \ &\mbox{if } z_1>0.
			\end{cases}
			\]
				With \(z_1\coloneqq G_i(x)\leq0\), it holds that
			\begin{align}
				\nonumber
				\tfrac{1}{2}
				\begin{Vmatrix}
				\nabla (r_\beta\circ F_i)(x)
				\end{Vmatrix}^2
				&=
				\tfrac{1}{2}
				{(R_\beta(F_i(x)))}_1^2
				\|\nabla G_i(x)\|^2
			\\
				\nonumber
				&=
				\tfrac{1}{2}
				\begin{pmatrix}
				\tfrac{1}{1-\beta}
				z_1
				\end{pmatrix}^2
				\|\nabla G_i(x)\|^2\\
				&=
				\tfrac{1}{1-\beta}
				\|\nabla G_i(x)\|^2
				(r_\beta\circ F_i)(x).
			\label{eq:KLii}
			\end{align}
			According to the assumption, we have \(\|\nabla G_i(x)\|>0\), and again we conclude that \(r_\beta(F_i(x))=0\).
			Finally, we have \(r_\beta(F_i(x))=0\) trivially when \(z_1\coloneqq G_i(x)>0\).

			Let us now consider the KL property under condition~\eqref{set_intersection}, along with either condition (i') or (ii').
			Since \(\inf_{x\in\R^n}q_i(x)>0\) under condition (i') and \(\inf_{x\in\R^n}\|\nabla G_i(x)\|>0\) under condition (ii'), in both cases it follows from \eqref{eq:KLi} and \eqref{eq:KLii} that \(r_\beta\circ F_i\) satisfies the PL inequality with some constant \(l_{\beta,i}>0\).
			According to~\cite[Appendix G]{Karimi_2016}, the KL property of \(r_\beta\circ F_i+\delta_C\) is defined by the existence of a constant \(\tilde l_\beta>0\) such that
			\[
			\min_{s\in\partial(r_\beta\circ F_i+\delta_C)(x)}\ \|s\|^2
			\
			\geq
			\
			2
			\tilde l_\beta
			\begin{pmatrix}
				r_\beta\circ F_i+\delta_C
			\end{pmatrix}(x)
			\]
			for all \(x\in\dom(r_\beta\circ F_i+\delta_C)\).
			In our case where \(r_\beta\circ F_i\) is differentiable and \(\delta_C\) is convex, we have
			\[
			\partial(r_\beta\circ F_i+\delta_C)(x)
			=
			\bigl\{
			\nabla (r_\beta\circ F_i)(x)+\xi
			\bigm|
			\xi\in N_C(x)
			\bigr\}.
			\]
			Based on this, let us consider the inequality:
			\[
			\begin{Vmatrix}
				\xi
			\end{Vmatrix}^2
			+
			2
			\big\langle
				\nabla (r_\beta\circ F_i)(x),
				\xi
			\big\rangle
			\leq0
			\quad
			\Leftrightarrow
			\quad
			\begin{Vmatrix}
				\xi
				+
				\nabla (r_\beta\circ F_i)(x)
			\end{Vmatrix}
			\leq
			\begin{Vmatrix}
				\nabla (r_\beta\circ F_i)(x)
			\end{Vmatrix}.
			\]
			Therefore, the condition \eqref{set_intersection} implies that
			\begin{equation}\label{KL_inf}
			\inf_{\xi\in N_C(x)}
			\begin{pmatrix}
				\begin{Vmatrix}
				\xi
			\end{Vmatrix}^2
			+
			2
			\big\langle
				\nabla (r_\beta\circ F_i)(x),
				\xi
			\big\rangle
			\end{pmatrix}
			\geq0.
			\end{equation}
			Let \(\xi\in N_C(x)\), then we have
			\begin{align*}
			&
				\begin{Vmatrix}
					\nabla (r_\beta\circ F_i)(x)
					+
					\xi
				\end{Vmatrix}^2
			\\
			={} &
				\begin{Vmatrix}
					\nabla (r_\beta\circ F_i)(x)
				\end{Vmatrix}^2
				+
				\begin{Vmatrix}
					\xi
				\end{Vmatrix}^2
				+
				2
				\big\langle
					\nabla (r_\beta\circ F_i)(x)
					,
					\xi
				\big\rangle
			\\
			\geq{} &
				\begin{Vmatrix}
					\nabla (r_\beta\circ F_i)(x)
				\end{Vmatrix}^2
				+
				\inf_{\tilde\xi\in N_C(x)}
				\begin{pmatrix}
					\begin{Vmatrix}
					\tilde\xi
				\end{Vmatrix}^2
				+
				2
				\big\langle
					\nabla (r_\beta\circ F_i)(x)
					,
					\tilde\xi
				\big\rangle
				\end{pmatrix}
			\\
				\text{\small by \eqref{KL_inf}}
			\geq{} &
				\begin{Vmatrix}
					\nabla (r_\beta\circ F_i)(x)
				\end{Vmatrix}^2
			\\
				\text{\small by (i') or (ii')}
			\geq{} &
				2l_{\beta,i}
				(r_\beta\circ F_i)(x).
			\end{align*}
			Since the condition~\eqref{set_intersection} implies \(N_C(x)\neq\emptyset\), thus \(x\in C\) and we have
			\[
			\begin{Vmatrix}
				\nabla (r_\beta\circ F_i)(x)
				+\xi
			\end{Vmatrix}^2
			\geq
			2l_{\beta,i}
			\begin{pmatrix}
				r_\beta\circ F_i+\delta_C
			\end{pmatrix}(x).
			\]
			In this way, we conclude that \(r_\beta\circ F_i+\delta_C\) has the KL property over \(\dom(r_\beta\circ F_i+\delta_C)\).
			\end{proof}

			\begin{remark}
			Note that the null gradient in conditions (ii) and (ii') of \cref{proposition_PL_linear} correspond to the setting in \cref{example_equality_inequality_CC}.
			Furthermore, although conditions (i), (ii), (i') and (ii') may be difficult to verify for all \(x\in\R^n\), an important and commonly encountered special case arises when the constraints are linear.
			Specifically, consider \(F_i(x)=(a_i^\top x+b_i,c_i^\top x+d_i)\), where \(a_i,c_i\in\R^n\) and \(b_i,d_i\in\R\).
			Then, all conditions in \cref{proposition_PL_linear} simply reduce to:
			either (i) \(a_i\) and \(b_i\) being linearly independent,
			or (ii) \(a_i\neq0\) and \(c_i=0,d_i=0\).
			\end{remark}

			Assuming that all conditions in \cref{proposition_PL_linear} are satisfied, the result in~\cite[Appendix G]{Karimi_2016} implies that \(r_\beta\circ F_i+\delta_C\) has the proximal-PL property.
			Then, according to~\cite[Theorem 5]{Karimi_2016}, the projected gradient method with the step-size \(\nicefrac{1}{L}\),
			\[
			\tilde{x}^{\nu+1}=
			\Pi_C
			\begin{pmatrix}
				\tilde{x}^{\nu}
				-
				\frac{1}{L}
				\nabla (r_\beta\circ F_i)(\tilde{x}^{\nu})
			\end{pmatrix}
			\]
			converges linearly to the global minimum 0 from the starting point \(\tilde{x}^0\), where \(L\) is the Lipschitz modulus of \(r_\beta\circ F_i\).
			Therefore, the complexity to meet \(
			(r_{\beta}\circ F_{i})(x^0)
			\leq
			\nicefrac{\epsilon^{2}}{4}
			\) is \(\mathcal{O} (\log\frac{1}{\epsilon})\).
			When the Lipschitz modulus \(L\) is unknown, backtracking is commonly used to determine the step-size.
			In practice, however, it is well known that backtracking often yields  faster practical convergence than the fixed step-size, even when \(L\) is known.
			Excluding function evaluations needed in the linesearch, the complexity remains \(\mathcal{O} (\log\frac{1}{\epsilon})\) calls to the gradient in this case.

			The condition~\eqref{set_intersection} in \cref{proposition_PL_linear} imposes an additional geometric structure between \(F_i\) and \(C\) that ensures the KL property.
			However, this condition is nontrivial to verify.
			When \(p>1\), the situation becomes even more complex.
			The condition~\eqref{set_intersection} now becomes
			\[
			\B
			\begin{pmatrix}
				-
				\sum_{i=1}^{p}
				\nabla (r_\beta\circ F_i)(x)
				,
				\begin{Vmatrix}
				\sum_{i=1}^{p}
				\nabla (r_\beta\circ F_i)(x)
			\end{Vmatrix}
			\end{pmatrix}
			\cap
			N_C(x)
			=
			\set{0}
			\]
			and, in addition, the following inequality is required:
			\begin{equation}\label{grad_align}
			\begin{Vmatrix}
				\sum_{i=1}^{p}
				\nabla (r_\beta\circ F_i)(x)
			\end{Vmatrix}^2
			\geq
			\kappa\sum_{i=1}^{p}
			\begin{Vmatrix}
				\nabla (r_\beta\circ F_i)(x)
			\end{Vmatrix}^2
			\ \
			\text{for some}\ \kappa>0.
			\end{equation}
			Eq.~\eqref{grad_align} can be interpreted as a \emph{gradient alignment} condition, which requires that the gradients do not cancel each other out excessively, or, equivalently, exhibit some degree of positive correlation.
			The problem structures under which~\eqref{set_intersection} and~\eqref{grad_align}  naturally arise will be explored in future work.
			Nevertheless, even when \(\mathcal{G}\) lacks the proximal-PL property, we can still solve~\eqref{feasibility_prob} for a starting point of \cref{algorithm2}, albeit heuristically, as the method then only guarantees convergence to a stationary point of~\eqref{feasibility_prob}.
			The complete algorithm is presented in \cref{algorithm3}, and its effectiveness is demonstrated through a range of numerical experiments in the next section.

			\begin{algorithm}[h]
			\caption{LL-based homotopy (two-stage) approach}\label{algorithm3}
			\begin{algorithmic}[1]
			\itemsep=3pt%
			\renewcommand{\algorithmicindent}{0.3cm}%

			\item[\textbf{Initialization:}]

			Let \(\nu=1\). Given \(\epsilon>0\), \(\lambda^0>0\), \(\beta\in(0,1)\) and \(\tilde{x}^0\in\R^n\).

			\item[\textbf{\textbf{Stage 1}:}]

			\State
				\begin{tabular}[t]{@{}l@{}}
					Find an approximate solution \(x^0\) to the problem \(\displaystyle\minimize_{x\in C}\textstyle\sum_{i=1}^{p}r_\beta(F_i(x))\) such that%
				\\[3pt]
					either \(\sum_{i=1}^{p}r_{\beta}(F_{i}(x^0))\leq\nicefrac{\epsilon^{2}}{4}\)
					\hfill \textcolor{gray}{\small \(\rhd\) ideal: near-feasible point}%
				\\[3pt]
			% 		\item[]
						\hphantom{either}\llap{or}
						\(\dist (-\nabla (\sum_{i=1}^{p}r_\beta\circ F_i)(x^0),N_C(x^0))\leq\epsilon\)
						\hfill \textcolor{gray}{\small \(\rhd\) heuristic: \(\epsilon\)-stationary point}%
				\\[3pt]
					by using an inner solver from the starting point \(\tilde{x}^0\).
				\end{tabular}

			\item[\textbf{\textbf{Stage 2}:}]

			\item[\textbf{Repeat:}]

			\State  Choose \(\lambda^{\nu}\) and set \(s_{\lambda^{\nu},\beta}:=f+\sum_{i=1}^{p}\frac{1}{\lambda^\nu}
			r_{\beta}\circ F_{i}\).

			\State Find an approximate solution \(x^\nu\) to the problem \(\minimize_{x\in C}s_{\lambda^{\nu},\beta}(x)\) such that

			\begin{itemize}
				\item \(s_{\lambda^{\nu},\beta}(x^\nu)\leq\min\set{s_{\lambda^{\nu},\beta}(x^{\nu-1}),s_{\lambda^{\nu},\beta}(x^0)}\) and
				\item \(\dist (-\nabla s_{\lambda^{\nu},\beta}(x^\nu),N_C(x^\nu))\leq\epsilon\)
			\end{itemize}
			by using an inner solver with starting point
			\(x^{\nu,0}\coloneqq\arg\min\set{s_{\lambda^{\nu},\beta}(\hat x)}[\hat{x}\in\set{x^{\nu-1},x^{0}}]\).

			\State Let \(\nu\leftarrow\nu+1\).

			\item[\textbf{Until:} The stopping criterion
			\(
				\sum_{i=1}^{p}
			r_{\beta}(F_{i}(x^\nu))
				\leq
				\nicefrac{\epsilon^{2}}{2}
			\)
			is satisfied.]
			\end{algorithmic}
			\end{algorithm}

	%% ██ 6. NUmerical Results ███████████████████████████████████████████████████████████████████████████████████████████
	\section{Numerical Results}\label{sec_experiments}
		\subsection{Implementation}
			In this section, we test \cref{algorithm3} on several benchmark problems.
			Following \cref{example_equality_inequality_CC}, all equality and inequality constraints are reformulated as CCs, which leads to the form of~\eqref{MPCC_simple}.
			\cref{algorithm3} with a first-order inner solver is referred to as \emph{LL2(1st)} (two-stage LL-based homotopy approach with a first-order inner solver).
			The method without trying to find a feasible point at the beginning is called \emph{LL1} (one-stage LL-based homotopy approach).
			Furthermore, we observe that the second-order solver~\cite{Coleman_1996} may perform well in stage 1, and result in a better final solution.
			Therefore, it is also included in the comparison and referred to as \emph{LL2(2nd)}.
			It is well known that the Scholtes method~\cite{Scholtes_2001} exhibits strong numerical performance compared to other regularization methods in practice~\cite{Hoheisel_2013}.
			It is implemented as follows:
			\begin{align*}
			\minimize_{x\in C}
			\ \
			& f(x)
			\\
			\stt\ \
			&
			G(x)\geq0,\
			H(x)\geq0,\
			G_i(x)H_i(x)\leq\tau,\
			\ i\in\set{1,\ldots,p}
			\\
			& c_{i}(x)=0,\ i\in\mathcal{E}
			\\
			& c_{i}(x)\leq0,\ i\in\mathcal{I}
			\end{align*}
			where \(\tau\in\R_+\) is the relaxation parameter.
			We also implemented the LL method without relaxing the general equality and inequality constraints:
			\begin{align*}
			\minimize_{x\in C}
			\ \
			& f(x)
			+
			\sum_{i=1}^{p}
			\env_{\lambda,\beta\lambda}\delta_D
			(F_i(x))
			\\
			\stt\ \
			& c_{i}(x)=0,\ i\in\mathcal{E}
			\\
			& c_{i}(x)\leq0,\ i\in\mathcal{I}
			\end{align*}
			which is referred to as \emph{LL penalty}.
			For the comparison of open source and commercial solvers, we used ALGENCAN~\cite{Andreani_2008}, IPOPT~\cite{Wachter_2006} and MINOS~\cite{Murtagh_1983}.
			These NLP solvers have been shown to be efficient for MPCCs to some extent, as demonstrated in~\cite{Izmailov_2012,Kirches_2022,Leyffer_2006,Raghunathan_2005}.
			Following the methods in~\cite{Izmailov_2012,Kirches_2022}, we report the performance of NLP solvers based on the following formulation:
			\[
			G(x)\geq0,\
			H(x)\geq0
			\quad
			\text{and}
			\quad
			\langle G(x),H(x)\rangle
			\leq0.
			\]
			We set the tolerance as \(\epsilon=10^{-8}\) for LL-based, Scholtes and NLP methods.
			The parameters for LL-based methods are: \(\beta=0.999\), \(\rho=0.8\) and \(\lambda^0=1\). The parameters in~\cite{Scholtes_2001} are adopted for Scholtes method, i.e., \(\tau^\nu=0.1\tau^{\nu-1}\) and \(\tau^0=1\).
			The maximum number of outer iterations for the LL-based and Scholtes methods is 200.
			Eq.~\eqref{stopping_criterion} and~\eqref{stopping_criterion_norm} serve as the stopping criteria for the LL-based and Scholtes methods, respectively.
			The above methods are implemented in MATLAB (R2024b).
			L-BFGS-B\footnote{\url{https://github.com/stephenbeckr/L-BFGS-B-C}} is used as the inner solver for LL1, LL2(1st) and stage 2 of LL2(2nd), with the memory parameter set to 5.
			The problems are modeled in AMPL (A Mathematical Programming Language)~\cite{Fourer_2003}.
			The NLP solvers are invoked via AMPL MATLAB API\@.

		\subsection{MacMPEC Benchmark}
			In order to obtain quantitative results, we first benchmark \cref{algorithm3} on the MacMPEC collection.\footnote{\url{https://wiki.mcs.anl.gov/leyffer/index.php/MacMPEC}}
			As a preliminary result, we use 77 problems from the collection with fewer than 100 variables.
			For each test problem, we perform 20 runs from randomly generated starting points, where each element is a uniform random value in the interval \([-50,50]\).
			\cref{table1} reports the number of cases in which the algorithm converges to an optimal point, a suboptimal but feasible point, an infeasible point, or encounters a solver failure.
			The returned objective value is regarded as optimal if its relative suboptimality with respect to the best known value, i.e., \((\nicefrac{|\text{obj}-\text{obj}^*|}{|\text{obj}^*|})\times 100\%\), is within $5\%$.
			We use IPOPT as the inner solver for both the Scholtes and LL penalty methods, as it outperforms other NLP solvers on MacMPEC benchmark.
			\cref{table1} shows that the LL penalty performs best in finding the optimal solution, and it is followed by the Scholtes method.
			However, the Scholtes method returns more infeasible points compared to LL-based methods.
			LL2(1st) and LL2(2nd) are particularly effective at avoiding infeasible points, demonstrating the effectiveness of the two-stage approach.
			Since LL2(2nd) initializes \cref{algorithm2} using a second-order stationary point of problem~\eqref{feasibility_prob}, it has a higher likelihood of converging to the global minimizer.
			In contrast, LL2(1st), which uses only a first-order stationary point, may be more prone to converging to local minimizers.
			While LL1 returns the highest number of infeasible points among the LL-based methods, it still performs better in this regard than the Scholtes method and other NLP solvers.
			Overall, we conclude that \cref{algorithm3} exhibits superior robustness compared to the other methods.
			It consistently translates infeasible points into at least suboptimal points, indicating strong potential for generating high-quality solutions in practice.

			\begin{table}[h]
			\centering
			\caption{Problem number counting of optimal, suboptimal and infeasible points, and solver failures.}
			\resizebox{\columnwidth}{!}{%
			\begin{tabular}{|l|l|l|l|l|l|l|l|l|}
			\hline
			& LL1 & LL penalty & LL2 (1st) & LL2 (2nd) & Scholtes & ALGENCAN & IPOPT & MINOS
			\\ \hline
			optimal  & \makecell[c]{1262} & \makecell[c]{1372} & \makecell[c]{1273} & \makecell[c]{1286}  & \makecell[c]{1339} & \makecell[c]{1240} & \makecell[c]{1293} & \makecell[c]{1076}
			\\ \hline
			suboptimal & \makecell[c]{243} & \makecell[c]{148} & \makecell[c]{251} & \makecell[c]{238} & \makecell[c]{151} & \makecell[c]{148} & \makecell[c]{171} & \makecell[c]{340}
			\\ \hline
			infeasible & \makecell[c]{35} & \makecell[c]{20} & \makecell[c]{16} & \makecell[c]{16} & \makecell[c]{50} & \makecell[c]{49} & \makecell[c]{76} & \makecell[c]{105}
			\\ \hline
			solver failure & \makecell[c]{0} & \makecell[c]{0} & \makecell[c]{0} & \makecell[c]{0} & \makecell[c]{0} & \makecell[c]{103} & \makecell[c]{0} & \makecell[c]{19}
			\\ \hline
			\end{tabular}
			}\label{table1}
			\end{table}

			\begin{figure}[htb]\centering
			\includegraphics[width=\textwidth]{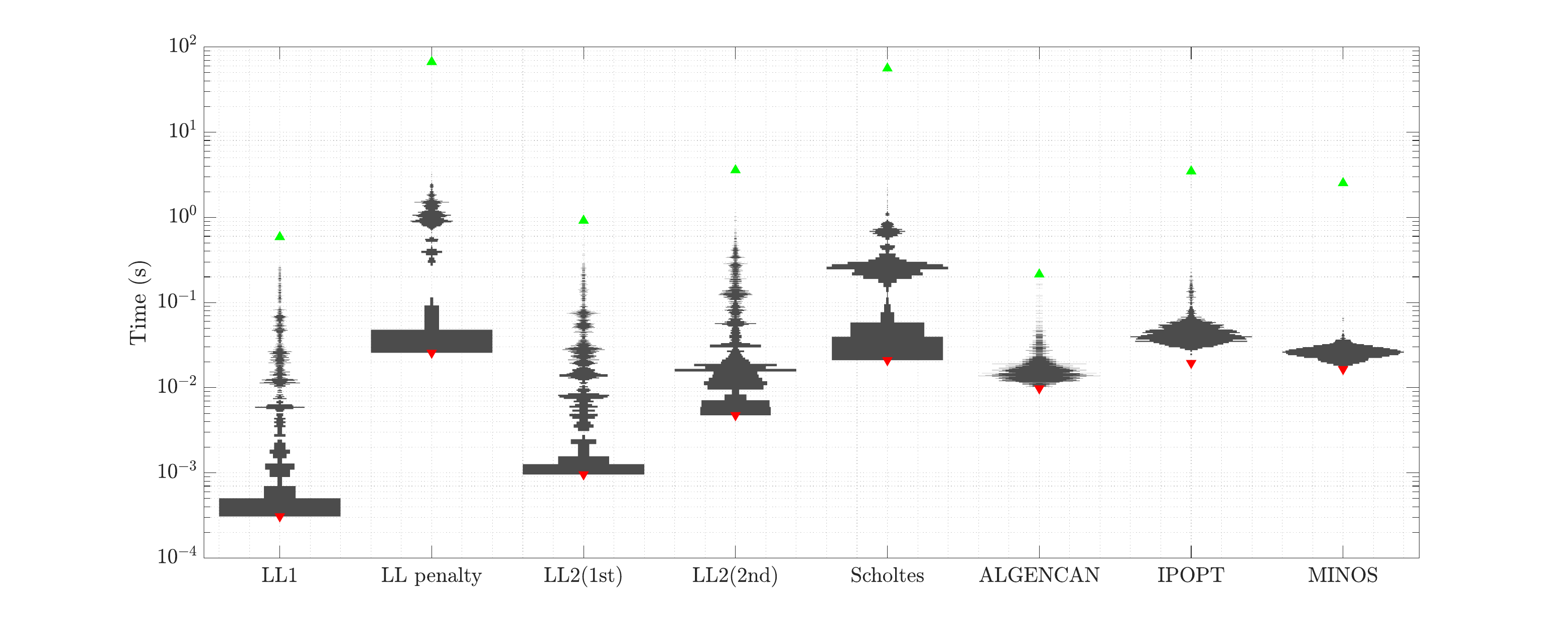}
			\caption{Computation time (time value distribution and minimal/maximal values).}\label{fig_time}
			\end{figure}

			The computation time of each method is presented in \cref{fig_time}, excluding cases of solver failure.
			LL1 and LL2(1st) present higher computation efficiency compared to the LL penalty, LL2(2nd) and Scholtes method, which is not surprising since it benefits from the fast first-order solver for smooth subproblems.
			Although the LL penalty and Scholtes methods achieve better performance in terms of solution quality, see \cref{table1}, they incur significantly higher computational costs due to their reliance on IPOPT as the inner solver.
			On the other hand, LL2(2nd) is slower than LL2(1st) since the second-order solver is used in stage 1.
			ALGENCAN, IPOPT and MINOS exhibit more consistent and less volatile computation times, which come from their efficient and mature implementation.
			Even though the current implementation of \cref{algorithm3} is a prototype in MATLAB and not optimized, it is still competitive in terms of the computation efficiency to the NLP solvers on the MacMPEC benchmark.

		\subsection{Bound-Constrained Quadratic MPCCs}
			In this part, we consider the larger size MPCCs with quadratic objective and bound-constrained variables:
			\begin{equation}\label{QPCC}
			\begin{split}
			\minimize_{x\in \R^n}
			\ \
			& \tfrac{1}{2} x^\top Qx+g^\top x
			\\
			\stt\ \
			& l_0\leq x_0\leq u_0
			\\
			& 0\leq x_1\ \bot\ x_2\geq0,
			\end{split}
			\end{equation}
			where \(x:=(x_0,x_1,x_2)\in\R^{n}\), \(x_0\in\R^{n_0}\), \(x_1,x_2\in\R^p\), \(Q\in\R^{n\times n}\) is a symmetric matrix, \(g\in\R^n\) is a vector, and the lower and upper bounds satisfy \({(l_0)}_i<{(u_0)}_i\) for \(i\in\set{1,\ldots,n_0}\).
			The problem of the form~\eqref{QPCC} is studied in~\cite{Kirches_2022}, as it arises as a subproblem within an augmented Lagrangian framework for solving more general MPCCs.
			Since it also constitutes a special case of~\eqref{MPCC_simple}, it provides a meaningful benchmark for evaluating our algorithm.
			A batch of test problems are randomly generated by using the code\footnote{\url{https://wiki.mcs.anl.gov/leyffer/index.php/BndMPCC}} in~\cite{Kirches_2022} and the problem information is:
			\(Q\) is sparse with the density \(\nicefrac{n^2}{4}\), whose elements are normally distributed, the elements in \(g\) are uniform random values in the interval \([-10,10]\), the bounds \(l_0,u_0\) are uniform random values in the interval \([-10,10]\) and \([0,20]\).
			In order to increase the difficulty of solving, the matrix \(Q\) is generated to be indefinite.
			We consider two problem sizes: \((n_0=100,p=200)\) and \((n_0=200,p=400)\), where the total number of variables is 500 and 1000.
			We randomly generate \(10\) problems for each size.
			The comparison with MINOS is excluded due to instability issues observed in the previous subsection.
			ALGENCAN is used as the inner solver of the Scholtes method due to its good performance on~\eqref{QPCC}, as shown in \cref{table_QPCC}.
			Since larger problem sizes are considered, we focus primarily on the first-order methods among the LL-based methods from this point onward.
			Furthermore, it is trivial to find a feasible point of~\eqref{QPCC}, so we initialize all methods with \(x^0=(l_0,0,0)\) without loss of generality.
			As a result, stage 1 of the LL-based method is not required.
			Noting that only bound constraints are involved in~\eqref{QPCC}, LL penalty is equivalent to LL1.

			In this subsection, we include a comparison with the projected gradient method (PGM), taking advantage of the fact that the constraint set in~\eqref{QPCC} admits an efficient projection.
			Specifically, we define the projection operator and the objective function as
			\[
				\Pi_{[l_0,u_0]\times D}(x)\coloneqq(\Pi_{[l_0,u_0]}(x_0),\Pi_D((x_1,x_2)))
				\quad
				\text{and}
				\quad
				f(x)\coloneqq\tfrac{1}{2} x^\top Qx+g^\top x.
			\]
			The PGM with Armijo linesearch for solving problem~\eqref{QPCC} begins by initializing \(x^0\) and setting parameters \(\epsilon>0\), \(\eta_{\max}>0\), \(\gamma\in(0,1)\) and \(c\in(0,1)\).
			At each iteration, the algorithm sets \(\eta\leftarrow\eta_{\max}\),
			computes the projected trial point \(x_{\mathrm{trial}}\leftarrow\Pi_{[l_0,u_0]\times D}(x^\nu-\eta\nabla f(x^\nu))\),
			and backtracks via \(\eta\leftarrow\gamma\eta\) until the sufficient decrease condition
			\[
				f(x_{\mathrm{trial}})
				\leq
				f(x^\nu)
				-
				c
				\langle
				\nabla f(x^\nu),x^\nu-x_{\mathrm{trial}}
				\rangle
			\]
			is satisfied.
			The next iterate is then set as \(x^{\nu+1}\leftarrow x_{\mathrm{trial}}\),
			and the process terminates when \(\|\nicefrac{(x^{\nu+1} - x^\nu)}{\eta}\|\leq\epsilon\).
			This method follows the standard PGM framework~\cite[Section 2.3]{Bertsekas_1999}, with commonly used parameter values: \(\eta_{\max}=1\), \(\gamma=0.5\) and \(c=10^{-4}\).
			The tolerance in PGM is consistent with the other methods, i.e., \(\epsilon=10^{-8}\).

			\begin{table}[htb]
				\centering
				\caption{Objective, constraint violation and computation time on bounded quadratic MPCCs}
				\resizebox{\columnwidth}{!}{%
				\begin{tabular}{|l|ccc|ccc|ccc|ccc|cc|}
				\hline
				\(n_0\)--\(p\)--num. & LL1 & & & Scholtes & & & ALGENCAN & & & IPOPT & & & PGM & \\
				& obj. & vio. & time\@(s) & obj. & vio. & time\@(s) & obj. & vio. & time\@(s) & obj. & vio. & time\@(s) & obj. & time\@(s) \\
				\hline
			100--200--0 & 0.90\% & 1.1e-11 & 1.82 & \textbf{-58355.23} & 2.9e-09 & 6.37 & 2.78\% & 0 & \textbf{\underline{1.41}} & 4.29\% & 1.4e-05 & 58 & 1.24\% & 4.64 \\
			100--200--1 & 0.46\% & 1.2e-11 & 2.10 & \textbf{-78425.76} & 2.2e-09 & 11.12 & 3.79\% & 0 & 0.79 & 4.41\% & 7.3e-06 & 70 & 1.11\% & \textbf{\underline{0.11}} \\
			100--200--2 & 3.61\% & 1.9e-10 & 1.82 & \textbf{-46340.69} & 2.8e-09 & 10.07 & 13.89\% & 0 & 0.68 & 14.45\% & 2.0e-05 & 66 & 7.29\% & \textbf{\underline{0.19}} \\
			100--200--3 & \textbf{-64054.32} & 4.4e-11 & 1.53 & 0.21\% & 2.5e-09 & 7.46 & 2.46\% & 0 & 0.63 & 3.90\% & 1.5e-05 & 82 & 0.21\% & \textbf{\underline{0.16}} \\
			100--200--4 & \textbf{-55115.72} & 1.1e-10 & 1.69 & 0.53\% & 2.6e-09 & 11.52 & 11.88\% & 0 & 1.15 & 15.87\% & 1.4e-05 & 94 & 0.01\% & \textbf{\underline{0.09}} \\
			100--200--5 & \textbf{-67241.94} & 1.0e-10 & 1.86 & 0.83\% & 2.8e-09 & 6.99 & 5.57\% & 0 & 0.89 & 16.00\% & 2.5e-05 & 75 & 0.47\% & \textbf{\underline{0.19}} \\
			100--200--6 & 0.70\% & 1.6e-11 & 1.88 & \textbf{-71063.23} & 3.6e-09 & 10.59 & 0.79\% & 0 & 0.72 & 2.94\% & 2.8e-05 & 62 & 0.64\% & \textbf{\underline{0.09}} \\
			100--200--7 & \textbf{-61504.67} & 4.9e-11 & 2.04 & 4.21\% & 0 & 9.08 & 8.17\% & 0 & 0.93 & 20.39\% & 5.3e-06 & 82 & 9.58\% & \textbf{\underline{0.06}} \\
			100--200--8 & \textbf{-54981.98} & 1.6e-11 & 2.30 & 0.34\% & 2.4e-09 & 19.86 & 22.47\% & 0 & 0.89 & 14.95\% & 2.4e-05 & 138 & 8.01\% & \textbf{\underline{0.10}} \\
			100--200--9 & \textbf{-54423.15} & 1.6e-10 & 1.66 & 0.15\% & 3.0e-09 & 7.52 & 1.89\% & 0 & 0.53 & 4.35\% & 1.9e-05 & 77 & 0.19\% & \textbf{\underline{0.28}} \\
			\hline
			200--400--0 & 1.09\% & 3.9e-12 & 11.73 & \textbf{-210017.27} & 9.4e-09 & 93 & 7.02\% & 0 & 8.65 & 4.57\% & 6.0e-05 & 883 & 1.52\% & \textbf{\underline{0.11}} \\
			200--400--1 & 4.50\% & 2.0e-11 & 10.89 & \textbf{-190225.77} & 0 & 126 & 6.41\% & 0 & 8.33 & 8.76\% & 1.6e-05 & 1296 & 0.38\% & \textbf{\underline{0.12}} \\
			200--400--2 & 4.25\% & 3.0e-11 & 8.63 & \textbf{-195462.75} & 0 & 106 & 8.53\% & 0 & 7.01 & 6.45\% & 1.1e-04 & 1551 & 5.26\% & \textbf{\underline{0.15}} \\
			200--400--3 & 1.51\% & 5.8e-12 & 10.80 & 1.00\% & 3.5e-09 & 104 & 4.52\% & 0 & 6.49 & 7.99\% & 5.8e-05 & 811 & \textbf{-176247.85} & \textbf{\underline{0.16}} \\
			200--400--4 & 1.54\% & 6.5e-11 & 8.59 & 2.08\% & 7.3e-09 & 196 & 8.19\% & 0 & 5.16 & 6.37\% & 1.3e-05 & 743 & \textbf{-197038.66} & \textbf{\underline{3.07}} \\
			200--400--5 & 6.29\% & 8.1e-12 & 8.51 & 1.93\% & 4.6e-09 & 93 & 5.37\% & 0 & 8.15 & 9.71\% & 4.4e-05 & 907 & \textbf{-175486.10} & \textbf{\underline{0.23}} \\
			200--400--6 & \textbf{-172420.88} & 6.9e-12 & 9.80 & 5.81\% & 3.6e-09 & 89 & 6.36\% & 0 & \textbf{\underline{6.54}} & 3.66\% & 3.5e-05 & 1619 & 4.83\% & 9.26 \\
			200--400--7 & \textbf{-192654.03} & 1.0e-10 & 7.76 & 4.41\% & 7.9e-10 & 116 & 7.41\% & 0 & 6.89 & 7.97\% & 1.8e-05 & 943 & 4.27\% & \textbf{\underline{0.14}} \\
			200--400--8 & \textbf{-147134.93} & 3.4e-12 & 9.31 & 2.62\% & 0 & 151 & 7.27\% & 0 & \textbf{\underline{5.31}} & 3.59\% & 7.1e-05 & 1049 & 3.80\% & 9.65 \\
			200--400--9 & \textbf{-166871.82} & 8.5e-11 & 7.98 & 0.67\% & 0 & 78 & 2.87\% & 0 & 8.99 & 10.34\% & 2.3e-05 & 754 & 6.24\% & \textbf{\underline{0.16}} \\
			\hline
				\end{tabular}
				}\label{table_QPCC}
			\end{table}

			The objective value, the maximal violation of CCs, i.e., \(\|\min\set{x_1,x_2}\|_{\infty}\), and the computation time of each method are presented in \cref{table_QPCC}.
			The minimal objective value in each problem is highlighted in bold and the minimal computation time is in bold with an underline.
			The suboptimal objective value is reported in terms of relative suboptimality.
			Computation times exceeding 50 seconds are rounded to the nearest integer.
			The column for constraint violation of the PGM is omitted,
			since the projection operator \(\Pi_{[l_0,u_0]\times D}\) naturally ensures feasibility for problem~\eqref{QPCC}.
			Notably, ALGENCAN also ensures feasibility, as reported by its zero violation in \cref{table_QPCC}.
			However, if a small violation of CCs (on the level of 1e-10) is acceptable, then LL1 is preferable, as it yields improved objective values in many instances.
			The Scholtes method delivers comparable solution quality to LL1 but is significantly slower.
			On the other hand, it is expected that the PGM is the fastest method overall, as it employs a single-loop structure (requiring no inner solver) and operates over a simple constraint set.
			However, it typically does not outperform LL1 in terms of solution quality.
			Finally, IPOPT reports a struggling performance on~\eqref{QPCC} with indefinite Hessian \(Q\).
			Overall, we find that the LL-based method is efficient at finding high-quality solutions for large-scale bound-constrained quadratic MPCCs.

		\subsection{Linearly Constrained Quadratic MPCCs}
			In this subsection, we consider some larger size MPCCs with quadratic objective and linearly constrained variables:
			\begin{equation}\label{QPCC_general}
			\begin{split}
			\minimize_{x\in \R^n}
			\ \
			& \tfrac{1}{2} x^\top Qx+g^\top x
			\\
			\stt\ \
			& Ax_0+a\leq 0
			\\
			& 0\leq x_1\ \bot\ Nx_0+Mx_1+q\geq0,
			\end{split}
			\end{equation}
			where \(x:=(x_0,x_1)\in\R^{n}\), \(x_0\in\R^{n_0}\) and \(x_1\in\R^p\).
			Test problems are randomly generated by using the code\footnote{\url{https://www3.eng.cam.ac.uk/~dr241/Papers/qpecgen.m}} in~\cite{Jiang_1999}, which also provides the global minima.
			We use the same number of variables as the previous subsection, i.e., \(n=500,1000\).
			Specifically, we choose \((n_0=100,p=400)\) and \((n_0=200,p=800)\), noting that \(n=n_0+p\) for~\eqref{QPCC_general}.
			For the problem data, we set the following: the matrix \(Q\) is positive definite with a condition number \(10^4\), the number of rows of \(A\) is \(\nicefrac{1}{2}\times n_0\), and the matrix \(M\) is nonsymmetric and nonmonotone (\(M+M^\top\) is not positive semidefinite) with a condition number \(2\times10^4\).
			The optimality conditions of~\eqref{QPCC_general} follow from \cref{definition_M_C}, in which the primal-dual feasibility and complementary slackness are
			\begin{align*}
			&
			{(Ax_0+a)}_i\leq 0,\
			\alpha_i\geq0\ \text{and}\
			\alpha_i{(Ax_0+a)}_i=0,\
			i\in\mathcal{I}
			\\
			&
			x_1\geq0,\ Nx_0+Mx_1+q\geq0,\
			y_i^G x_1=0\ \text{and}\
			y_i^H (Nx_0+Mx_1+q)=0,\
			i\in\mathcal{P}.
			\end{align*}
			By following~\cite[Definition 1]{Jiang_1999}, an index \(i\in\mathcal{I}\) is called \emph{first-level degeneracy} if \(\alpha_i={(Ax_0+a)}_i=0\),
			an index \(i\in\mathcal{P}\) is called \emph{second-level degeneracy} if \(i\in I_{00}\) and an index \(i\in I_{00}\) is called \emph{mixed degeneracy} if either \(y_i^G=0\) or \(y_i^H=0\),
			in which the set \(I_{00}\) is defined in~\eqref{index_set_I00}, and \(\mathcal{I},\mathcal{P}\) are index sets of inequality and complementarity constraints, respectively.
			We first test problems with \emph{zero degeneracy}, i.e., the cardinalities of the first-level, second-level, and mixed degeneracy index sets are set to 0.
			Subsequently, to increase the problem-solving difficulty, we generate problems with \emph{high degeneracy}.
			Specifically, we choose the index set cardinality of the first-level degeneracy as \(\nicefrac{3}{5}\) of the number of rows of \(A\), the second-level degeneracy as \(\nicefrac{3}{4}\times p\) and the mixed degeneracy as \(\nicefrac{2}{3}\) of the second-level degeneracy.
			Furthermore, for each problem size, 10 random problems are generated and tested, with the starting point of each instance randomly selected from the interval \([-50, 50]\).

			\begin{table}[htb]
				\centering
				\caption{Objective, constraint violation and computation time on linearly constrained quadratic MPCCs}
				\resizebox{\columnwidth}{!}{%
				\begin{tabular}{|l|ccc|ccc|ccc|ccc|}
				\hline
				\(n_0\)--\(p\)--degen.--num. & LL1 & & & LL2(1st) & & & ALGENCAN & & & IPOPT & & \\
				& obj. & vio. & time\@(s) & obj. & vio. & time\@(s) & obj. & vio. & time\@(s) & obj. & vio. & time\@(s) \\
				\hline
				100--400--zero--0 & \textbf{-221887.01} & 2.5e-10 & 97 & \textbf{0\%} & 3.0e-10 & \textbf{\underline{93}} & \(<\)0.1\% & 2.4e-04 & 712 & \(>\)100\% & 3.1e-03 & 525  \\
				100--400--zero--1 & \textbf{-100443.01} & 5.5e-11 & \textbf{\underline{9.21}} & \textbf{0\%} & 6.7e-11 & 10.65 & \(<\)0.1\% & 1.1e-04 & 631 & 1.39\% & 2.3e-05 & 457 \\
				100--400--zero--2 & \textbf{-127464.67} & 7.7e-11 & \textbf{\underline{30.42}} & \textbf{0\%} & 6.3e-11 & 30.49 & \(<\)0.1\% & 9.3e-05 & 776 & \(>\)100\% & 1.1e-05 & 232 \\
				100--400--zero--3 & \textbf{-78290.04} & 4.5e-11 & \textbf{\underline{2.25}} & \textbf{0\%} & 5.2e-11 & 3.61 & 57.99\% & 8.3e-09 & 672 & 2.12\% & 2.2e-04 & 295 \\
				100--400--zero--4 & \textbf{-111968.19} & 1.1e-10 & 35.18 & \textbf{0\%} & 9.0e-11 & \textbf{\underline{34.50}} & \(<\)0.1\% & 2.3e-03 & 340 & \(>\)100\% & 1.7e-04 & 170 \\
				100--400--zero--5 & \textbf{-75648.18} & 1.8e-10 & \textbf{\underline{28.96}} & \textbf{0\%} & 3.4e-11 & 29.17 & \(<\)0.1\% & 3.6e-04 & 322 & \(>\)100\% & 1.4e-03 & 130 \\
				100--400--zero--6 & \textbf{-152564.60} & 5.6e-11 & \textbf{\underline{13.58}} & \textbf{0\%} & 1.8e-10 & 16.67 & 1.05\% & 8.5e-10 & 1077 & 32.86\% & 1.1e-05 & 69 \\
				100--400--zero--7 & \textbf{-38928.21} & 5.8e-11 & \textbf{\underline{35.00}} & \textbf{0\%} & 5.8e-11 & 39.22 & \(<\)0.1\% & 9.5e-05 & 667 & 0.15\% & 2.7e-03 & 324 \\
				100--400--zero--8 & \textbf{-24295.04} & 1.5e-09 & \textbf{\underline{5.10}} & \textbf{0\%} & 1.1e-08 & 6.49 & \(<\)0.1\% & 1.8e-04 & 420 & \(>\)100\% & 2.6e-05 & 584 \\
				100--400--zero--9 & \textbf{-153419.83} & 7.0e-10 & 175 & \(<\)0.1\% & 7.4e-10 & 198 & \(<\)0.1\% & 4.6e-04 & 930 & \(>\)100\% & 8.3e-03 & \textbf{\underline{163}} \\
				\hline
				100--400--high--0 & \textbf{-68683.11} & 3.3e-10 & 196 & \textbf{0\%} & 2.9e-10 & \textbf{\underline{192}} & \(<\)0.1\% & 2.1e-03 & 4073 & \(>\)100\% & 9.8e-04 & 265 \\
				100--400--high--1 & \textbf{-55672.63} & 1.1e-09 & \textbf{\underline{17.19}} & \textbf{0\%} & 4.9e-11 & 19.14 & \textbf{0\%} & 6.8e-06 & 2545 & 0.38\% & 9.9e-04 & 533 \\
				100--400--high--2 & \textbf{-53030.90} & 6.1e-11 & \textbf{\underline{78}} & \textbf{0\%} & 7.5e-11 & 93 & \(<\)0.1\% & 5.0e-05 & 4092 & 0.24\% & 3.2e-05 & 681 \\
				100--400--high--3 & \textbf{-32820.07} & 1.5e-09 & \textbf{\underline{4.01}} & \textbf{0\%} & 1.2e-09 & 5.53 & \textbf{0\%} & 9.2e-05 & 2454 & 42.55\% & 8.2e-06 & 1047 \\
				100--400--high--4 & \textbf{-42400.77} & 1.1e-10 & \textbf{\underline{52}} & \textbf{0\%} & 6.2e-11 & 57 & \(<\)0.1\% & 1.0e-02 & 3376 & \(>\)100\% & 1.9e-05 & 293 \\
				100--400--high--5 & \textbf{-25807.63} & 1.1e-10 & 118 & \textbf{0\%} & 1.0e-10 & \textbf{\underline{98}} & \(<\)0.1\% & 2.8e-03 & 3982 & \(>\)100\% & 5.3e-04 & 102 \\
				100--400--high--6 & 3.08\% & 2.5e-03 & 292 & 0.41\% & 1.0e-03 & \textbf{\underline{285}} & \textbf{-2729.96} & 5.9e-03 & 3938 & \(>\)100\% & 5.4e-05 & 443 \\
				100--400--high--7 & \textbf{-59925.08} & 1.1e-10 & \textbf{\underline{99}} & \textbf{0\%} & 2.8e-10 & 136 & \(<\)0.1\% & 3.2e-03 & 3739 & 0.39\% & 1.1e-03 & 447 \\
				100--400--high--8 & \textbf{-45550.65} & 3.8e-07 & 172 & \textbf{0\%} & 5.2e-08 & \textbf{\underline{156}} & \(<\)0.1\% & 1.2e-03 & 3674 & 0.15\% & 1.0e-03 & 418 \\
				100--400--high--9 & \textbf{-52139.38} & 6.3e-07 & 184 & \textbf{0\%} & 6.0e-07 & \textbf{\underline{175}} & \(<\)0.1\% & 7.1e-03 & 2945 & \(>\)100\% & 6.2e-03 & 505 \\
				\hline
				200--800--zero--0 & \textbf{-25562.79} & 5.3e-11 & \textbf{\underline{655}} & \textbf{0\%} & 6.0e-11 & 691 & \(<\)0.1\% & 1.7e-03 & 12803 & \(>\)100\% & 1.2e-03 & 3328 \\
				200--800--zero--1 & \textbf{-232452.69} & 4.8e-11 & \textbf{\underline{151}} & \textbf{0\%} & 3.6e-11 & 162 & \(<\)0.1\% & 3.5e-03 & 2764 & \(>\)100\% & 2.5e-04 & 2007 \\
				200--800--zero--2 & \textbf{-323667.78} & 3.3e-11 & \textbf{\underline{11.82}} & \textbf{0\%} & 7.4e-11 & 18.80 & \textbf{0\%} & 3.6e-04 & 1984 & 20.01\% & 3.7e-04 & 1890 \\
				200--800--zero--3 & \textbf{-244977.07} & 1.2e-09 & \textbf{\underline{682}} & \(<\)0.1\% & 6.3e-10 & 753 & \(<\)0.1\% & 3.4e-03 & 2606 & \(>\)100\% & 1.1e-04 & 2580 \\
				200--800--zero--4 & \textbf{-128268.60} & 3.4e-11 & \textbf{\underline{13.72}} & \textbf{0\%} & 5.1e-11 & 21.06 & \textbf{0\%} & 7.1e-05 & 2486 & 11.97\% & 1.0e-04 & 2184 \\
				200--800--zero--5 & \textbf{-117778.88} & 3.5e-11 & \textbf{\underline{38.50}} & \textbf{0\%} & 9.6e-12 & 45.72 & \textbf{0\%} & 2.4e-03 & 4054 & 57.22\% & 5.1e-05 & 4305 \\
				200--800--zero--6 & \textbf{-234265.58} & 1.6e-10 & \textbf{\underline{224}} & \(<\)0.1\% & 2.2e-10 & 230 & \(<\)0.1\% & 3.1e-03 & 2247 & \(>\)100\% & 7.3e-05 & 1935 \\
				200--800--zero--7 & \textbf{-28086.92} & 7.4e-12 & \textbf{\underline{23.52}} & \textbf{0\%} & 3.3e-11 & 30.75 & \(<\)0.1\% & 3.0e-04 & 1708 & 14.89\% & 1.7e-04 & 7822 \\
				200--800--zero--8 & \textbf{-292242.58} & 1.0e-11 & \textbf{\underline{72}} & \textbf{0\%} & 1.8e-11 & 79 & \(<\)0.1\% & 2.1e-03 & 6099 & \(>\)100\% & 1.6e-04 & 6265 \\
				200--800--zero--9 & \textbf{-668174.30} & 1.0e-10 & \textbf{\underline{62}} & \textbf{0\%} & 5.2e-11 & 70 & \(<\)0.1\% & 1.3e-03 & 2149 & \(>\)100\% & 5.3e-05 & 5044 \\
				\hline
				200--800--high--0 & \textbf{-11090.41} & 6.7e-11 & 382 & \textbf{0\%} & 8.4e-11 & \textbf{\underline{358}} & \(<\)0.1\% & 1.2e-02 & 18047 & 0.36\% & 6.6e-05 & 8932 \\
				200--800--high--1 & \textbf{-65477.02} & 8.2e-07 & \textbf{\underline{858}} & \textbf{0\%} & 2.1e-07 & 907 & \(<\)0.1\% & 6.0e-03 & 16212 & 0.16\% & 2.0e-03 & 10552 \\
				200--800--high--2 & \textbf{-122853.43} & 2.2e-10 & \textbf{\underline{25.50}} & \textbf{0\%} & 3.5e-09 & 29.68 & \textbf{0\%} & 1.0e-04 & 10190 & 38.39\% & 7.8e-05 & 8718 \\
				200--800--high--3 & \textbf{-81872.19} & 1.6e-07 & \textbf{\underline{952}} & \textbf{0\%} & 1.2e-06 & 961 & \(<\)0.1\% & 1.8e-02 & 18445 & \(>\)100\% & 1.4e-02 & 9795 \\
				200--800--high--4 & \textbf{-59376.49} & 2.3e-09 & \textbf{\underline{32.30}} & \textbf{0\%} & 2.7e-09 & 37.18 & \textbf{0\%} & 1.5e-05 & 12224 & 79.89\% & 3.8e-05 & 12542 \\
				200--800--high--5 & \textbf{-46166.97} & 2.7e-11 & 39.90 & \textbf{0\%} & 9.4e-10 & \textbf{\underline{39.42}} & \(<\)0.1\% & 1.4e-03 & 22028 & \(>\)100\% & 1.0e-04 & 7088 \\
				200--800--high--6 & \textbf{-76357.26} & 8.8e-11 & 686 & \textbf{0\%} & 7.0e-11 & \textbf{\underline{592}} & \(<\)0.1\% & 8.3e-03 & 21230 & \(>\)100\% & 8.4e-05 & 2966 \\
				200--800--high--7 & \(<\)0.1\% & 8.2e-07 & \textbf{\underline{1253}} & \(<\)0.1\% & 2.9e-06 & 1262 & \textbf{-14294.96} & 7.1e-03 & 16925 & 0.30\% & 8.0e-04 & 6985 \\
				200--800--high--8 & \textbf{-102912.14} & 7.3e-10 & \textbf{\underline{75}} & \textbf{0\%} & 4.1e-11 & 88 & \(<\)0.1\% & 3.1e-03 & 18732 & \(>\)100\% & 6.3e-05 & 4821 \\
				200--800--high--9 & \textbf{-241410.59} & 5.7e-11 & \textbf{\underline{123}} & \textbf{0\%} & 3.5e-11 & 124 & \(<\)0.1\% & 1.9e-03 & 15742 & \(>\)100\% & 4.5e-05 & 5331 \\
				\hline
				\end{tabular}
				}\label{table_QPCC_general}
			\end{table}

			The results are presented in \cref{table_QPCC_general}.
			We omit the Scholtes method from comparison, as it requires even more computational time than ALGENCAN{}.
			In most instances, LL-based methods find the best objective values, which are consistent with the global minima provided by the problem generator, and they are also faster than NLP solvers.
			Therefore, it is expected that LL penalty offers limited improvement in objective values, while incurring higher computational costs due to the use of an NLP solver as the inner solver.
			For these reasons, LL penalty is also omitted in \cref{table_QPCC_general}.
			Although ALGENCAN also returns high-quality solutions, it performs significantly slower, especially compared to the fast computation observed in the previous subsection.
			This indicates inefficiency in handling complex CCs and infeasible starting points.
			While IPOPT is faster than ALGENCAN, it returns lower-quality solutions, often yielding highly suboptimal solutions (\(>\)100\% deviation).
			On the other hand, the computation times of ALGENCAN and IPOPT increase significantly when the degeneracy level is raised from \emph{zero} to \emph{high}.
			In contrast, LL-based methods demonstrate robust performance regardless of degeneracy.
			It is also worth noting that LL-based methods violate the CCs minimally compared to ALGENCAN and IPOPT\@.
			Therefore, we conclude that LL-based methods are efficient for solving large-scale, linearly constrained quadratic MPCCs, particularly in the presence of infeasible starting points and high degeneracy, and are well suited for problems that demand high satisfaction of the CCs.

	%% ██ 7. Conclusion ██████████████████████████████████████████████████████████████████████████████████████████████████
	\section{Conclusion}
		We have introduced a new homotopy-based approach for solving MPCCs.
		The indicator function associated with the CC is nonsmooth and nonconvex, and is approximated using the Lasry--Lions double envelope to obtain a smooth reformulation.
		The original problem is then approximated by solving a sequence of smoothed subproblems with increasing penalization.
		When the proposed approach, \cref{algorithm1}, is initialized from a feasible point, we show that it converges to a C/M-stationary point in the limiting setting.
		To obtain an approximate stationary point within a finite number of steps, we further propose a practical and easily implementable variant, \cref{algorithm3}.
		It is shown that an \(\epsilon\)-approximate C/M-stationary point is generated within \(\mathcal{O}(\log(\epsilon^{-2}))\) outer iterations, and \(\mathcal{O}(\epsilon^{-4}\log(\epsilon^{-2}))\) total iterations when a first-order method is used as the inner solver.
		In \cref{algorithm3}, a smooth subproblem with simple constraints is used to approximate the general constrained MPCCs, and thus fast optimization algorithms can be readily deployed.
		The effectiveness of the proposed approach is validated through a comprehensive set of benchmark problems against the classical regularization method and state-of-the-art NLP solvers.
		As for future work, the following three directions will be pursued:
		(i) To make \BCCQ\@ usable in practice, verifiable conditions for \BCCQ\@ will be studied.
		(ii) As mentioned in \cref{remark_BCCQ}, the implications among MPCC-CRCQ, MPCC-CPLD, and \BCCQ\@ will be established.
		(iii) As observed in \cref{table_QPCC,table_QPCC_general}, the computational time increases with the problem dimension, the number of CCs and the level of degeneracy.
		Therefore, more efficient algorithms, such as those incorporating preconditioning in the inner solver, are worth developing.

	%% ██ Appendix ███████████████████████████████████████████████████████████████████████████████████████████████████████
	\appendix
	\section{\texorpdfstring{{Proof of \cref{theorem_LL_env}}}{Proof of Theorem \ref{theorem_LL_env}}}\label{appendix_theorem_LL_env}
		The complementarity set \(D\), defined in~\eqref{CC_set}, can be equivalently written as
		\begin{equation}
			D=D_1\cup D_2
			\quad\text{with}\quad
			D_1 \coloneqq \set{0}\times\R_+
			\quad\text{and}\quad
			D_2 \coloneqq \R_+\times\set{0},\label{disjunctive_set}
		\end{equation}
		which corresponds to the so-called \emph{disjunctive constraint}; see, for
		instance,~\cite[Section 5.1]{Jia_2023}.
		Then, the Moreau envelope in~\eqref{moreau_compl} admits the following representation:
		\begin{equation}\label{moreau_compl_disjunctive}
		\env_\lambda\delta_D(z)
		=
			\tfrac{1}{2\lambda}\dist {(z,D)}^2
		=
		\tfrac{1}{2\lambda}
		\min
		\set{
			\dist{(z,D_1)}^2,\dist{(z,D_2)}^2
		}.
		\end{equation}
		Based on the above notation, we derive the Lasry--Lions double envelope following \cref{definition_LL}.
		Specifically, we have
		\begin{subequations}
		\allowdisplaybreaks\@
		\begin{align}
		&
			\env_{\lambda,\mu}\delta_{D}(z)
		\nonumber
		\\
		\nonumber
		\stackrel{\text{\clap{\eqref{sup_LL_definition}}}}{=}{} &
			\sup_{z'\in\R^2}\{\env_\lambda\delta_{D}(z')-\tfrac{1}{2\mu}\|z'-z\|^2 \}
		\\
		\nonumber
		\stackrel{\text{\clap{\eqref{moreau_compl_disjunctive}}}}{=}{} &
			\tfrac{1}{\lambda}
			\sup_{z'\in\R^2}
			\left\{
			\tfrac{1}{2}
			\min
			\set{
				\dist{(z',D_1)}^2,\dist{(z',D_2)}^2
			}
			-
			\tfrac{1}{2\beta}\|z'-z\|^2
			\right\}
		\\
		\nonumber
		={} &
			-\tfrac{1}{\lambda}
			\inf_{z'\in\R^2}
			\left\{
			\tfrac{1}{2\beta}\|z'-z\|^2
			-
			\tfrac{1}{2}
			\min
			\set{
				\dist{(z',D_1)}^2,\dist{(z',D_2)}^2
			}
			\right\}
		\\
		\nonumber
		={} &
			-\tfrac{1}{\lambda}
			\inf_{z'\in\R^2}
			\left\{
			\tfrac{1}{2\beta}\|z'-z\|^2
			+
			\max
			\set{
			-\tfrac{1}{2}\dist{(z',D_1)}^2,
			-\tfrac{1}{2}\dist{(z',D_2)}^2
			}
			\right\}
		\\
		={} &
			-\tfrac{1}{\lambda}
			\inf_{z'\in\R^2}
			\left\{
			\max
			\set{
			\tfrac{1}{2\beta}\|z'-z\|^2-\tfrac{1}{2}\dist{(z',D_1)}^2,
			\tfrac{1}{2\beta}\|z'-z\|^2-\tfrac{1}{2}\dist{(z',D_2)}^2
			}
			\right\}\label{either_or_max}
		\\
		\nonumber
		={} &
			-\tfrac{1}{\lambda}
			\inf_{z'\in\R^2}
			\biggl\{
				\max\limits_{y\in\Delta_2}
				\Bigl[
					y_1
					\begin{pmatrix}
					\tfrac{1}{2\beta}\|z'-z\|^2 -\tfrac{1}{2}\dist{(z',D_1)}^2
					\end{pmatrix}
		\\
		&
			\hphantom{
				-\tfrac{1}{\lambda}
				\inf_{z'\in\R^2}
				\biggl\{
				\max\limits_{y\in\Delta_2}
				\Bigl[{}
			}
		% 	\quad
					+
					y_2
					\begin{pmatrix}
					\tfrac{1}{2\beta}\|z'-z\|^2 -\tfrac{1}{2}\dist{(z',D_2)}^2
					\end{pmatrix}
				\Bigr]
			\biggr\},
			\label{p_1_simplex}
		\end{align}
		\end{subequations}
		where \(y\coloneqq(y_1,y_2)\in\R^2\) and \(\Delta_2\coloneqq\set{y\in{\{0,1\}}^2}[y_1+y_2=1]\).
		Hence, the either-or structure of the maximization in~\eqref{either_or_max} is equivalently reformulated as a maximization over the discrete set \(\Delta_2\) in~\eqref{p_1_simplex}.
		Note that the inner maximization problem in~\eqref{p_1_simplex} corresponds to the \emph{support function} of the set \(\Delta_2\); see~\cite[8(10)]{Rockafellar_1998}.
		Thus, in light of~\cite[Example 11.2]{Bauschke_2011}, it follows that~\eqref{p_1_simplex} admits an equivalent formulation:
		\begin{equation}\label{cl_conv_p_1_simplex}
			\env_{\lambda,\mu}\delta_{D}(z)
			=
			-\tfrac{1}{\lambda}
			\inf_{z'\in\R^2}
			\left\{
			\max\limits_{y\in\cl\conv\Delta_2}
			\left[
				\begin{array}{l}
				y_1
				\begin{pmatrix}
				\tfrac{1}{2\beta}\|z'-z\|^2 -\tfrac{1}{2}\dist{(z',D_1)}^2
				\end{pmatrix}
				\\
				+
				y_2
				\begin{pmatrix}
				\tfrac{1}{2\beta}\|z'-z\|^2 -\tfrac{1}{2}\dist{(z',D_2)}^2
				\end{pmatrix}
			\end{array}
			\right]
			\right\},
		\end{equation}
		where \(\cl\conv\Delta_2=\set{y\in\R_+^2}[y_1+y_2=1]\subset\R^2\).
		Note that the distance functions appearing in~\eqref{cl_conv_p_1_simplex} can be expressed explicitly.
		In particular, we recall that the sets \(D_1\) and \(D_2\) are defined in~\eqref{disjunctive_set}, the associated projections \(\Pi_{D_1}(z)\) and \(\Pi_{D_2}(z)\) admit
		\[
			\Pi_{D_1}(z)
			=
			\begin{pmatrix}
			0
			\\
			{[z_2]}_+
			\end{pmatrix}
			\quad\text{and}\quad
			\Pi_{D_2}(z)
			=
			\begin{pmatrix}
			{[z_1]}_+
			\\
			0
			\end{pmatrix}.
		\]
		Therefore, the distance function associated with \(D_1\) can be written as
		\[
		\begin{aligned}
			\tfrac{1}{2}\dist{(z',D_1)}^2
			=
			\tfrac{1}{2}\|z'-\Pi_{D_1}(z')\|^2
			&=
			\tfrac{1}{2}\|z'\|^2-\langle z',\Pi_{D_1}(z')\rangle+\tfrac{1}{2}\|\Pi_{D_1}(z')\|^2
			\\
			&=
			\tfrac{1}{2}\|z'\|^2-z'_2 {[z'_2]}_{+}+\tfrac{1}{2}{[z'_2]}_{+}^2
			\\
			&=
			\tfrac{1}{2}\|z'\|^2-{[z'_2]}_{+}^2+\tfrac{1}{2}{[z'_2]}_{+}^2
			\\
			&=
			\tfrac{1}{2}\|z'\|^2-\tfrac{1}{2}
			{[z'_2]}_{+}^2
		\end{aligned}
		\]
		and, analogously, the distance function associated with \(D_2\) is given by
		\(
			\tfrac{1}{2}\|z'\|^2-\tfrac{1}{2}
			{[z'_1]}_{+}^2
		\).
		Substituting these expressions into~\eqref{cl_conv_p_1_simplex} yields
		\begin{align}
			&\env_{\lambda,\mu}\delta_{D}(z)
			\nonumber\\
			&=
			-\tfrac{1}{\lambda}
			\inf_{z'\in\R^2}
			\left\{
			\max\limits_{y\in\cl\conv\Delta_2}
			\left[
				\begin{array}{l}
				y_1
				\begin{pmatrix}
					\tfrac{1}{2\beta}\|z'-z\|^2
					-
					\tfrac{1}{2}\|z'\|^2
					+
					\tfrac{1}{2}{[z'_2]}_{+}^2
				\end{pmatrix}
				\\
				+y_2
				\begin{pmatrix}
					\tfrac{1}{2\beta}\|z'-z\|^2
					-
					\tfrac{1}{2}\|z'\|^2
					+
					\tfrac{1}{2}{[z'_1]}_{+}^2
				\end{pmatrix}
				\end{array}
			\right]
			\right\}
		\nonumber\\
		&=
			-\tfrac{1}{\lambda}
			\max\limits_{y\in\cl\conv\Delta_2}
			\left\{
			\inf_{z'\in\R^2}
			\left[
				\begin{array}{l}
				y_1
				\begin{pmatrix}
					\tfrac{1}{2\beta}\|z'-z\|^2
					-
					\tfrac{1}{2}\|z'\|^2
					+
					\tfrac{1}{2}{[z'_2]}_{+}^2
				\end{pmatrix}
				\\
				+y_2
				\begin{pmatrix}
					\tfrac{1}{2\beta}\|z'-z\|^2
					-
					\tfrac{1}{2}\|z'\|^2
					+
					\tfrac{1}{2}{[z'_1]}_{+}^2
				\end{pmatrix}
				\end{array}
			\right]
			\right\}
		\nonumber\\
		&=
			-\tfrac{1}{\lambda}
			\max\limits_{y\in\cl\conv\Delta_2}
			\left\{
			\inf_{z'\in\R^2}
			\begin{pmatrix}
				\tfrac{1}{2\beta}\|z'-z\|^2-
				\tfrac{1}{2}\|z'\|^2
			\\
				+
				\tfrac{1}{2}{[z'_1]}_+^2+\tfrac{1}{2}{[z'_2]}_+^2-
				\tfrac{1}{2}y_1{[z'_1]}_+^2-\tfrac{1}{2}y_2{[z'_2]}_+^2
			\end{pmatrix}
			\right\}
			\nonumber\\
			&=
			-\tfrac{1}{\lambda}
			\max\limits_{y\in\cl\conv\Delta_2}
			\left\{
			\inf_{z'\in\R^2}
			\begin{pmatrix}
				\tfrac{1}{2\beta}\sum_{i=1}^{2}{(z'_i-z_i)}^2-
				\tfrac{1}{2}\sum_{i=1}^{2}{(z'_i)}^2
			\\
				+
				\tfrac{1}{2}\sum_{i=1}^{2}{[z'_i]}_+^2-
				\tfrac{1}{2}\sum_{i=1}^{2}y_i{[z'_i]}_+^2
			\end{pmatrix}
			\right\}
			\nonumber\\
			&=
			-\tfrac{1}{\lambda}
			\max\limits_{y\in\cl\conv\Delta_2}
			\left\{
			\sum_{i=1}^{2}
			\inf_{z'_i\in\R}
			\begin{pmatrix}
				\tfrac{1}{2\beta}{(z'_i-z_i)}^2-
				\tfrac{1}{2}{(z'_i)}^2
				+
				\tfrac{1}{2}{[z'_i]}_+^2-
				\tfrac{1}{2}y_i{[z'_i]}_+^2
			\end{pmatrix}
			\right\},\label{pointwise_min}
		\end{align}
		in which the second equality follows from the \emph{minimax theorem}, since  the inner function is linear in \(y\) and convex in \(z'\), owing to the fact that \(\beta\in(0,1)\).
		The third equality follows from the constraint \(y_1+y_2=1\).
		Consequently, we obtain two one-dimensional inner convex minimization problems in~\eqref{pointwise_min}, each of which admits the minimizer, together with the corresponding function value, given by
		\begin{subequations}\label{inner_minimizer_z}
			\begin{equation}
				{z'_i}^\star
			=
				\begin{cases}
				\tfrac{1}{1-\beta}z_i,\ \ &\mbox{if } z_i\leq0
				\\[5pt]
				\tfrac{1}{1-\beta y_i}z_i,\ \ &\mbox{if } z_i\geq0
				\end{cases}
			\end{equation}
			and
			\begin{equation}
					\tfrac{1}{2\beta}{({z'_i}^\star-z_i)}^2
					-
					\tfrac{1}{2}{({z'_i}^\star)}^2
					+
					\tfrac{1}{2}{[{z'_i}^\star]}_+^2
					-
					\tfrac{1}{2}y_i{[{z'_i}^\star]}_+^2
				=
				\begin{cases}
				-\tfrac{1}{2(1-\beta)}z_i^2,\ \ &\mbox{if } z_i\leq0
				\\[5pt]
				-\tfrac{y_i}{2(1-\beta y_i)}z_i^2,\ \ &\mbox{if } z_i\geq0.
				\end{cases}
			\end{equation}
		\end{subequations}
		Depending on the positions of \(z_1\) and \(z_2\), we consider the following four cases.
		\begin{enumerate}
			\item \(z_1\leq0\) and \(z_2\leq0\).
			This case corresponds to the region \(\mathrm{O}_{-}\) shown in \cref{fig:areas}, and~\eqref{pointwise_min} becomes
			\begin{align*}
				\env_{\lambda,\mu}\delta_{D}(z)
			\stackrel{\text{\clap{\eqref{inner_minimizer_z}}}}{=}{} &
				-\tfrac{1}{\lambda}
				\max\limits_{y\in\cl\conv\Delta_2}
				\left\{
					-\tfrac{1}{2(1-\beta)}z_1^2
					-\tfrac{1}{2(1-\beta)}z_2^2
				\right\}
			\\
			={} &
				\tfrac{1}{\lambda}
				\tfrac{1}{2(1-\beta)}\|z\|^2
				=
				\tfrac{1}{\lambda}
				r_{\beta}(z),
			\end{align*}
			as needed in~\eqref{LL_env} and~\eqref{r_beta}.
			\item\label{case:ii} \(z_1\leq0\) and \(z_2\geq0\).
			This case corresponds to the left rectangular region of \(\mathrm{H}_{\beta}^{+}\) as labeled in \cref{fig:areas}, and~\eqref{pointwise_min} can be formulated as
			\begin{align}
			\nonumber
				\env_{\lambda,\mu}\delta_{D}(z)
			\stackrel{\text{\clap{\eqref{inner_minimizer_z}}}}{=}{} &
				-\tfrac{1}{\lambda}
				\max\limits_{y\in\cl\conv\Delta_2}
				\left\{
					-\tfrac{1}{2(1-\beta)}z_1^2
					-\tfrac{y_2}{2(1-\beta y_2)}z_2^2
				\right\}
			\\
			\nonumber
			={} &
				\tfrac{1}{\lambda}
				\min\limits_{y\in\cl\conv\Delta_2}
				\left\{
					\tfrac{1}{2(1-\beta)}z_1^2
					+\tfrac{y_2}{2(1-\beta y_2)}z_2^2
				\right\}
			\\
			={} &
				\tfrac{1}{\lambda}
				\tfrac{1}{2(1-\beta)}z_1^2
				+
				\tfrac{1}{\lambda}
				\min\limits_{y\in\cl\conv\Delta_2}
				\left\{
					\tfrac{y_2}{2(1-\beta y_2)}z_2^2
				\right\}.\label{H_beta_positive_1}
			\end{align}
			Recalling that \(\cl\conv\Delta_2=\set{y\in\R_+^2}[y_1+y_2=1]\), the minimization problem in~\eqref{H_beta_positive_1} is equivalent to
			\[
			\minimize_{y\in\R_+^2}
			\
			\tfrac{y_2}{2(1-\beta y_2)}z_2^2
			\quad
			\stt\
			y_1+y_2=1.
			\]
			Let \(\tau\in\R\) be the Lagrange multiplier associated with the constraint \(y_1+y_2=1\).
			Then, the associated Lagrangian is given by
			\[
			\mathcal{L}(y,\tau)
			=
			\tfrac{y_2}{2(1-\beta y_2)}z_2^2
			+\tau
			\begin{pmatrix}
				1-y_1-y_2
			\end{pmatrix},
			\]
			and the dual function is obtained as
			\begin{equation}\label{H_beta_positive_1_dual_fun}
			\psi(\tau)\coloneqq \inf_{y\in\R_+^2}\mathcal{L}(y,\tau)=
			\tau+
			\inf_{y_1\in\R_+}
			\left\{
				-\tau y_1
			\right\}
			+
			\inf_{y_2\in\R_+}
			\left\{
				\tfrac{y_2}{2(1-\beta y_2)}z_2^2-\tau y_2
			\right\}.
			\end{equation}
			The first minimization problem in~\eqref{H_beta_positive_1_dual_fun} implies that \(\tau\leq0\); otherwise \(\psi(\tau)=-\infty\).
			Thus, its minimizer is \(y_1(\tau)=0\).
			Considering the second minimization problem in~\eqref{H_beta_positive_1_dual_fun}, the gradient with respect to \(y_2\) is
			\(
			\tfrac{z_2^2}{2{(1-\beta y_2)}^2}-\tau
			\),
			which is nonnegative, owing to the facts that \(z_2\geq0\) and \(\tau\leq0\).
			Hence, the minimum is attained at \(y_2(\tau)=0\).
			Consequently, the minimizer in the dual function~\eqref{H_beta_positive_1_dual_fun} is \(y(\tau)=0\), and the dual problem admits
			\[
			\max_{\tau\in\R_-}\psi(\tau)=\max_{\tau\in\R_-}\tau=0.
			\]
			It is straightforward to verify that the minimization problem in~\eqref{H_beta_positive_1} is convex, and thus \emph{strong duality} holds.
			The Lasry--Lions double envelope is then given by
			\[
			\env_{\lambda,\mu}\delta_{D}(z)
			\overset{\eqref{H_beta_positive_1}}{=}
			\tfrac{1}{\lambda}
			\tfrac{1}{2(1-\beta)}z_1^2
			=
			\tfrac{1}{\lambda}
			r_{\beta}(z),
			\]
			which yields the expressions in~\eqref{LL_env} and~\eqref{r_beta}.
			\item \(z_1\geq0\) and \(z_2\leq0\).
			This case corresponds to the lower rectangular region of \(\mathrm{H}_{\beta}^{-}\) shown in \cref{fig:areas}.
			Similar to the previous case, we have
			\[
			\env_{\lambda,\mu}\delta_{D}(z)
			=
			\tfrac{1}{\lambda}
			\tfrac{1}{2(1-\beta)}z_2^2
			=
			\tfrac{1}{\lambda}
			r_{\beta}(z).
			\]
		
			\item \(z_1>0\) and \(z_2>0\).
			This case corresponds to the region \(\mathrm{T}_{\beta}\), the right triangular region of \(\mathrm{H}_{\beta}^{+}\) and the upper triangular region of \(\mathrm{H}_{\beta}^{-}\), as labeled in \cref{fig:areas}.
			Proceeding as in the case~\ref{case:ii}, we have
			\begin{equation}\label{inner_minimizer_z_iv}
			\env_{\lambda,\mu}\delta_{D}(z)
			\overset{\eqref{pointwise_min},\eqref{inner_minimizer_z}}{=}
			\tfrac{1}{\lambda}
			\min\limits_{y\in\cl\conv\Delta_2}
			\left\{
				\tfrac{y_1}{2(1-\beta y_1)}z_1^2
				+\tfrac{y_2}{2(1-\beta y_2)}z_2^2
			\right\}.
			\end{equation}
			Let \(\tau\in\R\) be the Lagrange multiplier, the associated Lagrangian and the dual function are
			\begin{equation}\label{dual_function_iv}
			\mathcal{L}(y,\tau)
			=
			\tau+
			\sum_{i=1}^{2}
			\begin{pmatrix}
				\tfrac{y_iz_i^2}{2(1-\beta y_i)}-\tau y_i
			\end{pmatrix}
			\quad\text{and}\quad
			\psi(\tau)=
			\tau+
			\sum_{i=1}^{2}
			\inf_{y_i\in\R_+}
			\left\{
				\tfrac{y_iz_i^2}{2(1-\beta y_i)}-\tau y_i
			\right\}.
			\end{equation}
			The minimizers of the inner minimization problems in \(\psi(\tau)\) are given by
			\[
			y_i(\tau)=
			\begin{cases}
				\tfrac{1}{\beta}-\tfrac{z_i}{\beta\sqrt{2\tau}},\ \ &\mbox{if } \tau\geq\tfrac{z_i^2}{2}\\[5pt]
				0,\ \ &\mbox{if } \tau\leq\tfrac{z_i^2}{2}.
			\end{cases}
			\]
			In this way, we obtain the function \(\psi_i(\tau)\) and its gradient with respect to \(\tau\) as follows:
			\begin{subequations}\label{dual_function_grad_iv}
				\begin{equation}
					\psi_i(\tau)\coloneqq
					\inf_{y_i\in\R_+}
					\left\{
						\tfrac{y_i}{2(1-\beta y_i)}z_i^2-\tau y_i
					\right\}
				=
					\begin{cases}
						-\tfrac{{(\sqrt{2\tau}-z_i)}^2}{2\beta},\ \ &\mbox{if } \tau\geq\tfrac{z_i^2}{2}
						\\[5pt]
						0,\ \ &\mbox{if } \tau\leq\tfrac{z_i^2}{2}
					\end{cases}
				\end{equation}
				and
				\begin{equation}
					\nabla\psi_i(\tau)
				=
					\begin{cases}
						-\tfrac{\sqrt{2\tau}-z_i}{\beta\sqrt{2\tau}},\ \ &\mbox{if } \tau\geq\tfrac{z_i^2}{2}\\[5pt]
						0,\ \ &\mbox{if } \tau\leq\tfrac{z_i^2}{2}.
					\end{cases}
				\end{equation}
			\end{subequations}
			To solve the dual problem \(\maximize_{\tau}\psi(\tau)\), we consider the following three cases, depending on the positions of \(z_1\) and \(z_2\).
			\begin{enumerate}[label = (\roman{enumi}.\arabic*)]
				\item \(\tau\geq\tfrac{z_1^2}{2}\) and \(\tau\leq\tfrac{z_2^2}{2}\).
				In this case, the gradient of the dual function admits
				\[
				\nabla\psi(\tau)
				=
				1+\nabla\psi_1(\tau)+\nabla\psi_2(\tau)
				=
				1
				-\tfrac{\sqrt{2\tau}-z_1}{\beta\sqrt{2\tau}},
				\]
				and setting \(\nabla\psi(\tau)=0\) yields
				\(
				\tau^\star =\tfrac{1}{2}
				\begin{pmatrix}
					\frac{z_1}{1-\beta}
				\end{pmatrix}^2
				\).
				This implies that \(z_1>0\) and \(\tfrac{z_1}{1-\beta}\leq z_2\),
				which characterizes the right triangular region of \(\mathrm{H}_{\beta}^{+}\).
				By invoking strong duality once again, the Lasry--Lions double envelope is given by
				\[
				\begin{aligned}
				\env_{\lambda,\mu}\delta_{D}(z)
				\overset{\eqref{inner_minimizer_z_iv}}{=}
				\tfrac{1}{\lambda}\max_{\tau}\psi(\tau)
				&\overset{\eqref{dual_function_iv},\eqref{dual_function_grad_iv}}{=}
				\tfrac{1}{\lambda}
				\begin{pmatrix}
					\tau^\star-\tfrac{{(\sqrt{2\tau^\star}-z_1)}^2}{2\beta}
				\end{pmatrix}
				\nonumber\\
				&=
				\tfrac{1}{\lambda}
				\tfrac{1}{2(1-\beta)}z_1^2
				=
				\tfrac{1}{\lambda}
				r_{\beta}(z),
				\end{aligned}
				\]
				as required in~\eqref{LL_env} and~\eqref{r_beta}.
		
				\item \(\tau\leq\tfrac{z_1^2}{2}\) and \(\tau\geq\tfrac{z_2^2}{2}\).
				Similar to the previous case, we obtain \(z_2>0\) and \(\tfrac{z_2}{1-\beta}\leq z_1\), which characterizes the upper triangular region of \(\mathrm{H}_{\beta}^{-}\).
				Moreover, the Lasry--Lions double envelope is expressed as
				\[
				\env_{\lambda,\mu}\delta_{D}(z)
				=
				\tfrac{1}{\lambda}
				\tfrac{1}{2(1-\beta)}z_2^2
				=
				\tfrac{1}{\lambda}
				r_{\beta}(z).
				\]
				\item \(\tau\geq\tfrac{z_1^2}{2}\) and \(\tau\geq\tfrac{z_2^2}{2}\).
				Finally, the gradient of the dual function in this case reads
				\[
				\nabla\psi(\tau)
				=
				1+\nabla\psi_1(\tau)+\nabla\psi_2(\tau)
				=
				1
				-
				\tfrac{\sqrt{2\tau}-z_1}{\beta\sqrt{2\tau}}
				-
				\tfrac{\sqrt{2\tau}-z_2}{\beta\sqrt{2\tau}}.
				\]
				Setting \(\nabla\psi(\tau)=0\) yields
				\(
				\tau^\star =\tfrac{1}{2}
				\begin{pmatrix}
					\frac{z_1+z_2}{2-\beta}
				\end{pmatrix}^2
				\),
				which implies that \((1-\beta)z_1\leq z_2\leq\tfrac{z_1}{1-\beta}\),
				thus characterizing the region \(\mathrm{T}_{\beta}\).
				The Lasry--Lions double envelope then becomes
				\begin{align*}
					\env_{\lambda,\mu}\delta_{D}(z)
				\stackrel{\text{\clap{\eqref{inner_minimizer_z_iv}}}}{=}{} &
					\tfrac{1}{\lambda}\max_{\tau}\psi(\tau)
				\\
					\text{\scriptsize by \eqref{dual_function_iv} and \eqref{dual_function_grad_iv}}
				={} &
					\tfrac{1}{\lambda}
					\begin{pmatrix}
						\tau^\star
						-\tfrac{{(\sqrt{2\tau^\star}-z_1)}^2}{2\beta}
						-\tfrac{{(\sqrt{2\tau^\star}-z_2)}^2}{2\beta}
					\end{pmatrix}
				\\
				={} &
					\tfrac{1}{\lambda}
					\begin{pmatrix}
						\tfrac{1}{2\beta(2-\beta)}
						{(z_1+z_2)}^2
						-
						\tfrac{1}{2\beta}
						\|z\|^2
					\end{pmatrix}
					=
					\tfrac{1}{\lambda}
					r_{\beta}(z),
				\end{align*}
				as in~\eqref{LL_env} and~\eqref{r_beta}.
			\end{enumerate}
		
		\end{enumerate}
		In this way, the formulation of the Lasry--Lions double envelope is verified.

	%% ██ References █████████████████████████████████████████████████████████████████████████████████████████████████████
	\phantomsection
	\addcontentsline{toc}{section}{References}
	\bibliographystyle{plain}
	\bibliography{MPCC_LL.bib}

\end{document}